\newtheorem{theorem}{Theorem}[section]
\theoremstyle{definition} 
\newtheorem{definition}[theorem]{Definition}
\theoremstyle{plain}
\newtheorem{prop}[theorem]{Proposition}
\theoremstyle{definition} 
\theoremstyle{plain}
\newtheorem{lemma}[theorem]{Lemma}
\newtheorem{cor}[theorem]{Corollary}
\theoremstyle{definition}
\DeclareMathOperator{\GL}{GL}
\DeclareMathOperator{\Aut}{Aut}
\DeclareMathOperator{\gal}{Gal}
\DeclareMathOperator{\rank}{rank}
\DeclareMathOperator{\End}{End}
\DeclareMathOperator{\orb}{orb}
\DeclareMathOperator{\stab}{stab}
\newcommand{\Z}{\mathbb{Z}}
\newcommand{\R}{\mathbb{R}}
\newcommand{\C}{\mathbb{C}}
\newcommand{\Q}{\mathbb{Q}}
\newcommand{\N}{\mathbb{N}}
\newcommand{\g}{\mathfrak{g}}
\newcommand{\h}{\mathfrak{h}}
\newcommand{\n}{\mathfrak{n}}
\newcommand{\m}{\mathfrak{m}}
\begin{document}
	\title{On closed manifolds admitting an Anosov\\diffeomorphism but no expanding map}
	\author{Jonas Der\'e and Thomas Witdouck\thanks{Email: \href{mailto:jonas.dere@kuleuven.be}{jonas.dere@kuleuven.be} The first author was supported by a postdoctoral fellowship of the Research Foundation -- Flanders (FWO). The second author was supported by long term structural funding - Methusalem grant of the Flemish Government.}}
	\date{\today}
	
	\maketitle
	
	\begin{abstract}
A few years ago, the first example of a closed manifold admitting an Anosov diffeomorphism but no expanding map was given. Unfortunately, this example is not explicit and is high-dimensional, although its exact dimension is unknown due to the type of construction. In this paper, we present a family of concrete $12$-dimensional nilmanifolds with an Anosov diffeomorphism but no expanding map, where nilmanifolds are defined as the quotient of a $1$-connected nilpotent Lie group by a cocompact lattice. We show that this family has the smallest possible dimension in the class of infra-nilmanifolds, which is conjectured to be the only type of manifolds admitting Anosov diffeomorphisms up to homeomorphism. The proof shows how to construct positive gradings from the eigenvalues of the Anosov diffeomorphism under some additional assumptions related to the rank, using the action of the Galois group on these algebraic units.
\end{abstract}

\section{Introduction}
Expanding maps and Anosov diffeomorphisms form an important class of dynamical systems as they combine structural stability with chaotic behavior. These maps were intensively studied at the end of the sixties by S.~Smale, J.~Franks and M.~Shub in the series of papers \cite{fran69-1,fran70-1,shub69-1,shub70-1,smal67-1}. One of the central research problems is to describe the closed manifolds admitting such a dynamical system. The main goal of this paper is to present a family of 12-dimensional nilmanifolds admitting an Anosov diffeomorphism but no expanding map and to show that this is the lowest possible dimension for such an example. From this example we construct families with the same properties in every possible dimension $\geq 14$ as well. 

By the work of M. Gromov in \cite{grom81-1}, it is known that every closed manifold admitting an expanding map is homeomorphic to an infra-nilmanifold, i.~e.~the compact quotient of a nilpotent Lie group $N$ by a discrete group of isometries. Not every infra-nilmanifold admits expanding maps though, with the first classical example due to \cite{dl57-1}. The question which infra-nilmanifolds do admit an expanding map was answered by K. Dekimpe and the first author in \cite{dd14-1,dere14-1}, by showing that it only depends on the existence of a positive grading on the Lie algebra corresponding to the covering nilpotent Lie group $N$. For the manifolds admitting an Anosov diffeomorphism on the other hand, a lot less is known. It is conjectured that, up to homeomorphism, only infra-nilmanifolds can admit an Anosov diffeomorphism, but this has been open ever since the original question in \cite{smal67-1}. The conjecture has been confirmed only in some special cases, for example for codimension one Anosov diffeomorphisms in \cite{newh70-1}. Also describing the infra-nilmanifolds admitting an Anosov diffeomorphism is a challenging question, on which many people have worked over the past decades. 

Note that the first example of a nilmanifold with an Anosov diffeomorphism that does not admit an expanding map was given in \cite{dere14-1}. Unfortunately this example is not explicit and its dimension is unknown. This is due to the nature of the construction, since the manifold is constructed from a quotient of a free nilpotent Lie algebra by an ideal defined from a generating set with specific elements, making it hard to compute its dimension. In Section \ref{sec:familyofexamples} we improve this result by giving a family of $12$-dimensional concrete nilmanifolds having an Anosov diffeomorphism but no expanding map, using the terminology introduced in Section \ref{sec:prelim}. As a consequence of how being Anosov or having a positive grading behaves under taking direct sums of Lie algebras, as demonstrated in Section \ref{sec:decom}, we also give examples in every dimension $\geq 14$. 

The main result of this paper is not only the existence of the family in dimension $12$, but also the proof that this is the smallest possible dimension for such an example in the class of infra-nilmanifolds. In order to show this, one possible strategy could be to classify all Anosov Lie algebras in lower dimensions and show that these all have a positive grading. In \cite{lw08-1, mw15-1}, a complete classification is given up to dimension $9$, but it is hard to extend this classification any further. Our approach is different as we construct a positive grading from the eigenvalues of the Anosov automorphism in a more general setting, without completely determining the Lie bracket. 

The methods we develop use the properties of the eigenvalues of these automorphism, which are hyperbolic algebraic integers. A first study of how these properties determine the structure of the Lie algebra was given in \cite{main12-1}, although we shift our focus to the action of the Galois group on these eigenvalues, making our results more general. In particular, the notion of rank for Anosov automorphisms plays a crucial role, as introduced in Section \ref{sec:rank}, giving bounds on the dimension of the lower central series of an Anosov Lie algebra. This is partially based on some properties introduced in \cite{payn09-1}, although the focus of the latter paper was on quotients of free Lie algebras, whereas we start from a given type for a Lie algebra. The final result is then presented in Section \ref{sec:dimension}, by checking the possibilities for every type.

\section{Preliminaries}
\label{sec:prelim}

In this section, we give all the necessary background for the remainder of the paper. The first part deals with the theory of infra-nilmanifolds, in particular the relation to expanding maps and Anosov diffeomorphisms, whereas the second part discusses the basics of of Galois theory.

\subsection{Infra-nilmanifolds}

We recall the basis properties of Anosov diffeomorphisms and expanding maps on (infra-)nilmanifolds, for more details we refer to \cite{deki18-1}. 

Let $N$ be a simply connected and connected nilpotent Lie group with corresponding real nilpotent Lie algebra $\n^\R$. We define a \emph{nilmanifold} as a quotient $\Gamma \backslash N$ with $\Gamma$ a uniform lattice in $N$. In this case, the group $\Gamma$ is a finitely generated torsion-free nilpotent group and vice versa, every group with these properties uniquely embeds into a simply connected and connected nilpotent Lie group as a uniform lattice. There is a unique rational Lie algebra $\n^\Q$ corresponding to $\Gamma$, defined as the rational span of $\log(\Gamma)$ in $\n^\R$. In general, we call a rational subalgebra $\n^\Q$ of $\n^\R$ such that $\dim_\Q(\n^\Q) = \dim_\R(\n^\R)$ and the $\R$-span of $\n^\Q$ is $\n^\R$ a \emph{rational form} of a real Lie algebra. Every rational form of $\n^\R$ corresponds to a lattice $\Gamma$ of $N$. Moreover, if $\Gamma_1$ and $\Gamma_2$ are two lattices corresponding to the same rational form $\n^\Q$, then the lattices $\Gamma_1$ and $\Gamma_2$ are commensurable, meaning that $\Gamma_1 \cap \Gamma_2$ has finite index in both $\Gamma_1$ and $\Gamma_2$. 

The existence of an Anosov diffeomorphism on the nilmanifold $\Gamma \backslash N$ only depends on the rational Lie algebra $\n^\Q$, more specifically on the existence of a certain automorphism. We first introduce some terminology. Let $E \subset \C$ be any subfield of the complex numbers and $\n^E$ a Lie algebra defined over $E$.
\begin{definition}
 A linear map on $\n^E$ is called \textit{hyperbolic} if it has no eigenvalues of absolute value 1 and is called \textit{integer-like} if its characteristic polynomial has integer coefficients and determinant $\pm 1$. We say that an automorphism $A:\n^E \to  \n^E$ is an \textit{Anosov automorphism} if is both hyperbolic and integer-like. A nilpotent rational Lie algebra $\n^\Q$ that admits an Anosov automorphism is called an \textit{Anosov Lie algebra}.
\end{definition}   
By \cite{deki99-1}, the nilmanifold $\Gamma \backslash N$ admits an Anosov diffeomorphism if and only if the Lie algebra $\n^\Q$ is Anosov. For expanding maps on the other hand, the existence depends only on the covering Lie group $N$.
\begin{definition}
We call an automorphism $A: \n^E \to \n^E$ \textit{expanding} if every eigenvalue of $A$ has absolute value $> 1$. We say that $\n^E$ has a \textit{positive grading} if there exists a decomposition of $\n^E = \displaystyle \bigoplus_{i > 0} \n_i$ into subspaces $\n_i \subset \n^E$ such that $[\n_i,\n_j] \subset \n_{i+j}$.
\end{definition}
By \cite[Theorem 3.1.]{dere14-1} the existence of an expanding map on $\Gamma \backslash N$ is equivalent to the existence of an expanding automorphism on $\n^\R$, which is equivalent to the existence of a positive grading on $\n^\R$. If $\n^E$ is a Lie algebra over the field $E$ and $E \subset F$ is a field extension, then the Lie algebra $\n^F = \n^E \otimes_E F$ by extending scalar multiplication is a Lie algebra of the same dimension over $E$. Note that this notation is compatible with the previous one for rational forms $\n^\Q$ in a real Lie algebra $\n^\R$. The existence of a positive grading does not depend on the field $E$, meaning that $\n^E$ has a positive grading if and only if $\n^F$ has a positive grading. In order to give the family of nilmanifolds it suffices to give a family of $12$-dimensional rational Anosov Lie algebras that have no expanding automorphism, which we will do in Section \ref{sec:familyofexamples}.

Note that there is the more general notion of \emph{infra-nilmanifolds}, which we do not introduce in full detail here. Every infra-nilmanifold is finitely covered by a nilmanifold, and if the infra-nilmanifold admits an Anosov diffeomorphism, then also the covering nilmanifold admits an Anosov diffeomorphism. Moreover, if the covering nilmanifold does not admit an expanding map, then the infra-nilmanifold does not have an expanding map, see \cite{dere14-1}. Hence in order to show that our family has minimal dimension in the class of infra-nilmanifolds, it suffices to show that it has minimal dimension in the class of nilmanifolds. The remainder of this paper only deals with Lie algebras and their properties, where the dynamical implications follow from the discussion in this section.

\subsection{Galois theory}
\label{sec:prelimGalois}
In this section we recall some basic facts from Galois theory based on the book \cite{stew98-1} and the relation with Anosov Lie algebras. 

Let $E/F$ be a field extension of finite degree. The automorphism group of this extension, denoted by $\Aut(E, F)$, is defined as the group of all field automorphisms of $E$ that fix $F$, i.e.
\begin{align*}
    \Aut(E, F) = \{ \sigma \in \Aut(E) \mid \forall x \in F: \sigma(x) = x \}.
\end{align*}
An extension $E/F$ is called \emph{Galois} if the points that are fixed by all elements of $\Aut(E, F)$ are exactly the elements of the smaller field $F$. We write $\gal(E, F)$ for the automorphisms of a Galois extension $E/F$ in order to emphasize that the extension is Galois. 

Now fix a polynomial $f$ of degree $r$ with coefficients in $\Q$. The splitting field of $f$ over $\Q$ is the unique smallest field $\Q(f)$ with $\Q \subset \Q(f) \subset \C$ such that $\Q(f)$ contains all the roots of $f$. The splitting fields over $\Q$ are exactly the Galois extensions of $\Q$ and most of the time we will be working with these fields. Now let $g$ be another rational polynomial with $\Q(g) \subset \Q(f)$, then $\Q(f)/\Q(g)$ is Galois as well. Moreover, as a consequence of the fundamental theorem of Galois theory, $\gal(\Q(f), \Q(g))$ is a normal subgroup of $\gal(\Q(f), \Q)$. For any root $\lambda$ of $g$ and automorphism $\sigma \in \gal(\Q(f), \Q)$, the element $\sigma(\lambda)$ is again a root of $g$. This gives an action of $\gal(\Q(f), \Q)$ on the set of roots of $g$. Moreover, if $g$ is irreducible of degree $n$ all its roots $\lambda_1, \ldots, \lambda_n$ are distinct and the action of the Galois group is transitive, i.e.~for any two roots $\lambda_i, \lambda_j$, there exists an automorphism $\sigma \in \gal(\Q(f), \Q)$ such that $\sigma(\lambda_i) = \lambda_j$. The action is determined by the map

$$ \iota_g:\gal(\Q(f), \Q) \to S_n: \sigma \mapsto \iota(\sigma) $$
	with
$$ \iota(\sigma)(i) = j \Leftrightarrow \sigma(\lambda_i) = \lambda_j. $$
In general, $f$ is not irreducible, but it can be written as a product of its irreducible factors over $\Q$, say $f = g_1 \cdot \ldots \cdot g_m$ with $\deg(g_i) = r_i$. For each factor $g_i$ we have that $\Q(g_i) \subset \Q(f)$. Note that since $r = r_1 + \ldots + r_m$, we get the natural inclusion $S_{r_1} \oplus \ldots \oplus S_{r_m} \hookrightarrow S_{r}$ 
%by sending $(\sigma_1, \ldots, \sigma_m)$ to the permutation in $S_r$ that maps $i$ to $\sigma_j(i)$ whenever $r_{j - 1} \leq i \leq r_j$ with the convention that $r_0 = 0$. 
This gives a map
\begin{equation*}
    \begin{tikzcd}
    \iota:\gal(\Q(f), \Q) \arrow[rr, "\iota_{g_1} \oplus \ldots \oplus \iota_{g_m}"] & & S_1 \oplus \ldots \oplus S_{r_m} \arrow[r, hookrightarrow] & S_r.
    \end{tikzcd}
\end{equation*}
This map is injective as a field automorphism of $\Q(f)$ is completely determined by its restriction to the roots of $f$. During this paper we will, after ordering the roots of the polynomial $f$, drop the `$\iota$' and just write $\sigma$ for both the field automorphism and the corresponding permutation in $S_r$, as there is no confusion possible. We will write elements of $S_r$ with standard cycle notation, i.e. if $\sigma = (a_1\, a_2 \, \ldots \, a_s)$, then $\sigma(a_1) = a_2, \, \sigma(a_2) = a_3, \, \ldots \, , \sigma(a_s) = a_1$ and $\sigma$ fixes all other elements.

It is natural to apply the orbit-stabilizer theorem to the action of the Galois group on the roots.

\begin{theorem}[Orbit-stabilizer]
    Let $G$ be a group which has an action on a set $X$. For any $x \in X$, we have that
    \begin{align*}
        |\orb_G(x)|\cdot |\stab_G(x)| = |G|.
    \end{align*}
    where $\orb_G(x) = \{ g \cdot x \mid g \in G \}$ is the orbit of $x$ under the action of $G$ and $\stab_G(x) = \{ g \in G \mid g\cdot x = x \}$ is the stabilizer subgroup of $x$ in $G$.
\end{theorem}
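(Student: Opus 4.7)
The plan is to exhibit an explicit bijection between the orbit $\orb_G(x)$ and the set of left cosets $G/\stab_G(x)$, from which the cardinality identity will follow immediately via Lagrange's theorem (assuming, as the statement implicitly does via the notation $|G|$, that $G$ is finite; the paper will only apply the result to the finite Galois group $\gal(\Q(f),\Q)$, so this is harmless). A preliminary sanity check is that $\stab_G(x)$ really is a subgroup of $G$: the identity fixes $x$ by the first axiom of a group action, and if $g, h \in \stab_G(x)$ then $(gh)\cdot x = g\cdot(h\cdot x) = g \cdot x = x$ and $g^{-1}\cdot x = g^{-1}\cdot(g\cdot x) = (g^{-1}g)\cdot x = x$.

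The heart of the proof is the map
\[
    \varphi: G/\stab_G(x) \to \orb_G(x), \qquad g\,\stab_G(x) \mapsto g\cdot x.
\]
Both its well-definedness and its injectivity follow from the single observation that for $g_1, g_2 \in G$,
\[
    g_1 \cdot x = g_2 \cdot x \iff (g_2^{-1}g_1)\cdot x = x \iff g_2^{-1}g_1 \in \stab_G(x) \iff g_1\stab_G(x) = g_2\stab_G(x).
\]
Surjectivity of $\varphi$ is tautological, as every element of $\orb_G(x)$ is of the form $g\cdot x$ for some $g \in G$ by definition. Therefore $|\orb_G(x)| = [G:\stab_G(x)]$, and multiplying by $|\stab_G(x)|$ and invoking Lagrange's theorem yields the desired equality $|\orb_G(x)|\cdot|\stab_G(x)| = |G|$.

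There is no genuine obstacle here; the argument is entirely formal and standard. The only delicate point is conceptual rather than technical, namely keeping straight that the map $\varphi$ is defined on \emph{cosets} and therefore one must check it does not depend on the choice of coset representative — but as noted above this is exactly the same computation as injectivity, so it comes for free.
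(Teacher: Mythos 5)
Your proof is correct and is the standard textbook argument: exhibit the bijection $g\,\stab_G(x) \mapsto g\cdot x$ between left cosets of the stabilizer and the orbit, then invoke Lagrange. The paper itself states the orbit--stabilizer theorem as a known background fact without supplying any proof, so there is nothing to compare against; your argument fills that gap in the expected way and its implicit assumption that $G$ is finite is indeed harmless in the paper's applications to finite Galois groups.
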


A consequence that we will use often is stated below.

\begin{lemma}
    \label{lem:IrrPrimeDegreeCyclicPerm}
    Let $f$ be a irreducible polynomial over $\Q$ of prime degree $p$ with splitting field $E$ over $\Q$. There exists an element $\sigma \in \gal(E,\Q)$ of order $p$ and an ordering of the roots such that $\sigma = (1 \, 2 \, \ldots \, p)$.
\end{lemma}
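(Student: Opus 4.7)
The plan is to combine the transitivity of the Galois action with Cauchy's theorem and then analyze what cycle types of order $p$ are possible inside $S_p$.

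First I would observe that because $f$ is irreducible over $\Q$ of degree $p$ and $\Q$ has characteristic zero, $f$ has exactly $p$ distinct roots in $E$. As recalled in the preceding discussion, $G := \gal(E, \Q)$ acts transitively on this set of roots. Picking any root $\lambda$, orbit–stabilizer gives
\[
|G| = |\orb_G(\lambda)| \cdot |\stab_G(\lambda)| = p \cdot |\stab_G(\lambda)|,
\]
so $p$ divides $|G|$. By Cauchy's theorem there is an element $\sigma \in G$ of order exactly $p$.

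Next I would analyze $\sigma$ through its image in $S_p$ under the injection $\iota$ described in Section~\ref{sec:prelimGalois}. The order of a permutation equals the least common multiple of its cycle lengths. Since $p$ is prime, every cycle length of $\iota(\sigma)$ must be either $1$ or $p$; because $\iota(\sigma)$ permutes only $p$ symbols in total and its order is genuinely $p$ (so at least one cycle of length $p$ must appear), the only possibility is a single $p$-cycle with no fixed points. Thus $\iota(\sigma)$ is a $p$-cycle. Renumbering (i.e.\ relabeling) the roots so that this cycle reads $(1\,2\,\ldots\,p)$ yields the desired conclusion.

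The only potentially subtle point is the cycle-type argument, but for prime $p$ it is completely forced: any order-$p$ element in $S_p$ is automatically a single $p$-cycle. Everything else (transitivity implying $p \mid |G|$, Cauchy's theorem producing an element of order $p$, and reindexing to put the cycle in standard form) is routine, so I do not anticipate any real obstacle.
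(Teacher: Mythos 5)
Your proof is correct and follows essentially the same path as the paper's: transitivity plus orbit--stabilizer to get $p \mid |G|$, Cauchy's theorem to produce an order-$p$ element, and then the observation that an order-$p$ element of $S_p$ must be a single $p$-cycle (you phrase this via cycle lengths and the lcm, while the paper phrases it via orbits of $\langle\sigma\rangle$, but these are the same computation).
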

\begin{proof}
    Since $f$ is irreducible, the action of $G := \gal(E, \Q)$ on the roots of $f$ is transitive. Therefore we get for any root $\lambda$ of $f$ that $|\orb_G(\lambda)| = p$. The orbit-stabilizer thus tells us that $p$ divides the order of $G$. Following Cauchy's theorem there must exist an element $\sigma \in G$ of order $p$. Next we apply the orbit-stabilizer theorem to the group generated by $\sigma$, which we denote by $\langle \sigma \rangle$. This gives for any root $\lambda$ of $f$ that $|\orb_{\langle \sigma \rangle}(\lambda)|\cdot |\stab_{\langle \sigma \rangle}(\lambda)| = p$ and thus that $|\orb_{\langle \sigma \rangle}(\lambda)| = 1$ or $|\orb_{\langle \sigma \rangle}(\lambda)| = p$. If the orbit under $\sigma$ of any root (and hence every root) is equal to one, then $\sigma$ is the identity map which is a contradiction because it is of order $p$. Therefore there exists a root $\mu$ of which the orbit under $\langle \sigma \rangle$ counts $p$ elements and thus contains all roots of $f$. Then define $\lambda_i := \sigma^i(\mu)$ for $1 \leq i \leq p$. It follows that with this ordering of the roots, $\sigma$ is equal to the permutation $(1 \,2 \, \ldots \, p)$.
\end{proof}

To conclude this section, we give a method from \cite{dere13-1} for constructing Anosov Lie algebras using the action of the Galois group. Let $E$ be some finite degree Galois extension of the rationals and $\n^\Q$ a rational Lie algebra. By choosing a basis for $\n^\Q$, we get a natural identification $\n^E \approx E^n$ and an action of the Galois group of $E/\Q$ on $\n^E \approx E^n$ by letting $v^\sigma = (v_1, \ldots, v_n)^\sigma = (\sigma(v_1), \ldots, \sigma(v_n))$ for all $v \in \n^E$ and $\sigma \in \gal(E, \Q)$. This action does not depend on the choice of basis for $\n^\Q$. Let us denote the set of all linear maps on $\n^E$ by $\End(\n^E)$. Then we get an action of $\gal(E, \Q)$ on $\End(\n^E)$ as well, by letting $A^\sigma(v) = (A(v^{\sigma^{-1}}))^\sigma$ for all $v \in \n^E$, $A \in \End(\n^E)$ and $\sigma \in \gal(E, \Q)$.
\begin{theorem}
\label{thm:constructingAnosov}
    Let $\n^\Q$ be a rational Lie algebra and $\rho:\gal(E, \Q) \to \Aut(\n^\Q)$ be a representation. Suppose there exists a Lie algebra automorphism $A:\n^E \to \n^E$ such that $\rho_\sigma A \rho_{\sigma^{-1}} = A^\sigma$ for all $\sigma \in \gal(E, \Q)$. Then there also exists a rational form $\m^\Q \subset \n^E$ such that $A$ induces an automorphism on $\m^\Q$. If all eigenvalues of $A$ are algebraic units of absolute value different from 1, then $A:\mathfrak{m}^\Q \to \mathfrak{m}^\Q$ is an Anosov automorphism.
\end{theorem}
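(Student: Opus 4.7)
The plan is to apply Galois descent: construct a rational form $\m^\Q$ of $\n^E$ intrinsically from $\rho$, verify that the hypothesis $A^\sigma = \rho_\sigma A \rho_{\sigma^{-1}}$ forces $A$ to preserve $\m^\Q$, and then deduce the Anosov property from the eigenvalue hypothesis.

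A natural candidate is
\begin{equation*}
\m^\Q = \{v \in \n^E : v^\sigma = \rho_\sigma(v) \text{ for all } \sigma \in \gal(E,\Q)\},
\end{equation*}
the $\Q$-subspace of $\n^E$ on which the natural Galois action and the $\rho$-action agree. The bracket closure is then immediate: for $v, w \in \m^\Q$, since the bracket is rational and each $\rho_\sigma$ is a Lie algebra automorphism,
\begin{equation*}
[v,w]^\sigma = [v^\sigma, w^\sigma] = [\rho_\sigma v, \rho_\sigma w] = \rho_\sigma[v,w],
\end{equation*}
so $[v,w] \in \m^\Q$. The $A$-invariance is likewise direct: for $v \in \m^\Q$, using $v^\sigma = \rho_\sigma v$ and the hypothesis,
\begin{equation*}
(Av)^\sigma = A^\sigma v^\sigma = \rho_\sigma A \rho_{\sigma^{-1}}\rho_\sigma v = \rho_\sigma(Av),
\end{equation*}
so $Av$ again lies in $\m^\Q$. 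The remaining and more delicate step is to show that $\m^\Q$ has the right dimension to be a full rational form, that is, $\dim_\Q \m^\Q = \dim_\R \n^\R$ and $\m^\Q \otimes_\Q E = \n^E$. This is a Galois descent statement, which I would handle via Hilbert's Theorem 90: since $\rho$ is a group homomorphism, the assignment $\sigma \mapsto \rho_\sigma$ is a $1$-cocycle in $H^1(\gal(E,\Q), \GL(\n^E))$, and triviality of this class yields an explicit $b \in \GL(\n^E)$ from which one reads off the $\Q$-structure of $\m^\Q$.

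For the second part of the statement, under the extra hypothesis that every eigenvalue of $A$ is an algebraic unit of absolute value different from $1$, I would conclude that $A|_{\m^\Q}$ is Anosov. Hyperbolicity is immediate: $\m^\Q \otimes_\Q E = \n^E$, so the eigenvalues of $A|_{\m^\Q}$ over the algebraic closure are exactly those of $A$ on $\n^E$, hence all have absolute value not equal to $1$. For the integer-like condition, the characteristic polynomial of $A|_{\m^\Q}$ is monic with rational coefficients (as $A|_{\m^\Q}$ is $\Q$-linear), and its roots are algebraic units; since algebraic integers form a ring, the symmetric functions of the roots are simultaneously rational and algebraic integers, hence lie in $\Z$. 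The constant term is a product of algebraic units that is rational, and is therefore $\pm 1$.

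The main obstacle is the dimension count showing that $\m^\Q$ is a full $\Q$-form: once this is granted, the bracket closure and $A$-invariance are mechanical consequences of the hypotheses, and the Anosov conclusion is a standard fact about characteristic polynomials of integer-like endomorphisms. The descent step is the only place where one needs to use that $\rho$ is a group homomorphism rather than an arbitrary family of automorphisms, through the cocycle identity for $\sigma \mapsto \rho_\sigma$.
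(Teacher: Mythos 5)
The paper does not actually prove this theorem: it is cited from \cite{dere13-1}, and the text only records the explicit description of the rational form, $\m^\Q = \{v \in \n^E : \rho_\sigma(v) = v^\sigma \ \forall \sigma\}$. Your proposal reconstructs exactly that $\m^\Q$, and your verifications of bracket closure and $A$-invariance are correct and mechanical; in particular $(Av)^\sigma = A^\sigma(v^\sigma) = \rho_\sigma A \rho_{\sigma^{-1}}\rho_\sigma(v) = \rho_\sigma(Av)$ is precisely the right computation. The hyperbolicity and integer-like arguments (rational symmetric functions of algebraic units are integers, rational units are $\pm 1$) are also sound. So your outline is consistent with the paper's stated construction.

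The one genuine gap is the one you flag yourself: the Galois-descent dimension count showing $\m^\Q \otimes_\Q E = \n^E$. You identify Hilbert~90 as the right tool, but you should be careful with the cocycle conventions. Because $\rho$ takes values in $\GL_n(\Q)$, the map $\sigma \mapsto \rho_\sigma$ is a cocycle only after matching it to the correct semilinear twist: one wants a genuine left semilinear $G$-action on $\n^E$ whose fixed set is $\m^\Q$, and then $\dim_\Q(\n^E)^G = \dim_E \n^E$ follows from the (additive) normal basis/descent argument, with Hilbert~90 giving a change-of-basis matrix $b \in \GL_n(E)$ with $\m^\Q = b(\n^\Q)$. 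If you write $\sigma \cdot v := \rho_\sigma(v^\sigma)$, you do get an honest action because $\rho_\tau$ is rational and therefore commutes with entry-wise application of $\sigma$, but then the fixed set is $\{v : v^\sigma = \rho_{\sigma^{-1}}(v)\}$; reconciling this with the paper's $\{v : v^\sigma = \rho_\sigma(v)\}$ requires either an inversion in the definition of $\rho$ or of the $v\mapsto v^\sigma$ action. This is a bookkeeping issue rather than a conceptual one, but it is precisely where an unchecked sign of $\sigma$ could make the argument silently fail for non-abelian $\gal(E,\Q)$, so it deserves to be spelled out rather than waved at.
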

\noindent This result is constructive as the rational form $\mathfrak{m}^\Q$ is explicitly given by
\begin{equation*}
    \mathfrak{m}^\Q = \{ v \in \n^E \mid\,\, \forall \sigma \in \gal(E, \Q):\,\rho_\sigma(v) = v^\sigma \}.
\end{equation*}
With this result we are ready to present the family of Anosov Lie algebras in dimension $12$ in the next section.

%\section{Galois Theory and the orbit stabilizer theorem}

%To construct positive gradings on Anosov Lie algebras, we will use the action of the Galois group on the eigenvalues of the Anosov automorphism. In this section we give the basic notions of Galois theory that we will use and how the orbit stabilizer comes into play.

\section{Family of Anosov Lie algebras without positive grading}
\label{sec:familyofexamples}

In this section we exhibit a family of $12$-dimensional Anosov Lie algebras that do not have a positive grading. The family consists of rational Lie algebras $\m_k^\Q$ for $k > 0$ of type $(4,2,2,2,2)$ with $\m_k^\Q$ and $\m_l^\Q$ isomorphic if and only if $\frac{k}{l} = m^2$ for some integer $m \in \Z$. In order to show that the Lie algebras are Anosov, we construct these from Theorem \ref{thm:constructingAnosov} by using a fixed Lie algebra $\n^\Q$ and different fields $E$. To check that the Lie algebras do not have a positive grading we apply the methods of \cite{dere14-1}, whereas the isomorphisms between the Lie algebras $\m_k^\Q$ follow from a classification given in \cite{laur08-1}. 

The rational Lie algebra $\n^\Q$ is defined as the vector space over $\Q$ with basis $$\{ X_1, X_2, X_3, X_4, Y_1, Y_2, Z_1, Z_2, V_1, V_2, W_1, W_2 \}$$ and Lie bracket given by the relations
\begin{alignat*}{5}
    [X_1, X_3] &= Y_1 \quad \quad& [X_1, Y_1] &= Z_1 \quad \quad& [X_1, Z_1] &= V_1 \quad \quad& [X_3, V_1] &= W_1\\
    [X_2, X_4] &= Y_2 \quad \quad& [X_2, Y_2] &= Z_2 \quad \quad& [X_2, Z_2] &= V_2 \quad \quad& [X_4, V_2] &= W_2\\
    &&&&\\
    [Z_1, Y_1] &= W_1 \quad \quad & [X_1, X_4] &= W_1& &\\
    [Z_2, Y_2] &= W_2 \quad \quad & [X_2, X_3] &= W_2.& &
\end{alignat*}
The last two brackets ensure that $\n^\Q$ is not isomorphic to the direct sum of two filiform Lie algebras, where a filiform Lie algebra is a nilpotent Lie algebra of nilpotency class $c > 1$ and dimension $c+1$, being the smallest possible dimension of such a Lie algebra. 

In order to apply Theorem \ref{thm:constructingAnosov}, we take $E$ the field $\Q(\sqrt{k}) \subset \R$ where $k$ is a positive integer which is not a square. This is a quadratic extension of $\Q$ and its Galois group is given by $\gal(E, \Q) = \{\text{Id}, \tau\}$ with $\tau(a + b\sqrt{k}) = a - b \sqrt{k}$ for all $a, b \in \Q$. Let $\xi \in E$ be any algebraic integer with minimal polynomial of degree $2$, which exists by Dirichlet's unit theorem. It has only one conjugate which is also its inverse $\xi^{-1} = \tau(\xi)$, moreover it holds that $\vert \xi \vert \neq 1 \neq \vert \xi^{-1}\vert$. We define the linear map $A:\n^E \to \n^E$ and the representation $\rho:\{1, \tau\} \to \Aut(\n^\Q)$ by
\begin{equation*}
    A = \begin{pmatrix}
    B^{3} & & & & & \\
    & B^{-2} & & & & \\
    & & B & & & \\
    & & & B^{4} & & \\
    & & & & B^{7} & \\
    & & & & & B^{5}
    \end{pmatrix},
    \quad \quad \quad
    \rho_{\tau} = \begin{pmatrix}
    C & & & & & \\
    & C & & & & \\
    & & C & & & \\
    & & & C & & \\
    & & & & C & \\
    & & & & & C
    \end{pmatrix}
\end{equation*}
with respect to the basis $\{ X_1, X_2, X_3, X_4, Y_1, Y_2, Z_1, Z_2, V_1, V_2, W_1, W_2 \}$ and where
\begin{equation*}
    B = \begin{pmatrix}
    \xi &0\\
    0& \xi^{-1}
    \end{pmatrix}, \quad \quad \quad 
    C = \begin{pmatrix}
    0& 1\\
    1 &0
    \end{pmatrix}.
\end{equation*}
One can check that $A$ and $\rho_\tau$ are automorphisms of the Lie algebra $\n^E$. Since
\begin{align*}
    C B C^{-1} = \begin{pmatrix} \xi^{-1}& 0\\
    0 & \xi
    \end{pmatrix} = \begin{pmatrix} \tau(\xi)& 0\\
    0 & \tau(\xi^{-1})
    \end{pmatrix} = B^\tau
\end{align*}
it follows that $\rho_{\tau} A \rho_{\tau^{-1}} = A^\tau$.  Therefore all prerequisites of Theorem \ref{thm:constructingAnosov} are satisfied and $A$ induces an Anosov automorphism on the rational form $\m^\Q_k \subset \n^E$. We give $\m_k^\Q$ the subscript $k$ since its structure depends on the choice of field extension $E = \Q(\sqrt{k})$. 

To see that the Lie algebra $\n^\Q$ has no positive grading, or equivalently it has no expanding automorphism, we use \cite[Theorem 4.4.]{dere14-1}. Assume by contradiction that there would be an expanding automorphism, then this result implies that there exists an expanding automorphism $\varphi \in \Aut(\n^\Q)$ which commutes with the map $A$ above. Since all the eigenvalues of $A$ are distinct, this implies the expanding automorphism is diagonal in the basis $X_1, \ldots, W_2$, in particular we have $\varphi(X_i) = \lambda_i X_i$ with $\lambda_i > 1$ for $1 \leq i \leq 4$. The relations on $\n^\Q$ imply that 
\begin{alignat*}{5}
  \varphi(Y_1) &= \lambda_1 \lambda_3 Y_1 \quad \quad & \varphi(Z_1) &= \lambda_1^2 \lambda_3 Z_1 \quad \quad  &\varphi(V_1) &= \lambda_1^3 \lambda_3 V_1 \quad \quad  &\varphi(W_1) &= \lambda_1^3 \lambda_3^2 W_1\\
\varphi(Y_2) &= \lambda_2 \lambda_4 Y_2 \quad \quad & \varphi(Z_2) &= \lambda_2^2 \lambda_4 Z_2 \quad \quad & \varphi(V_2) &= \lambda_2^3 \lambda_4 V_2 \quad \quad & \varphi(W_2) &= \lambda_2^3 \lambda_4^2 W_2.
\end{alignat*} The last relations moreover imply that $\lambda_1 \lambda_4 = \lambda_1^3  \lambda_3^2 $ and $\lambda_2 \lambda_3 = \lambda_2^3 \lambda_4^2$. Combining these equations, we get that $\lambda_4 = (\lambda_1 \lambda_3)^2 = \lambda_1^2 \lambda_2^4 \lambda_4^4$ or thus $1 = \lambda_1^2 \lambda_2^4 \lambda_4^3$ which is a contradiction since $\varphi$ is expanding.

By using the explicit form of the rational Lie algebra $\m^\Q_k$, it is possible to compute the bracket relations. We first need a basis for which we can take
\begin{alignat*}{3}
    \overline{X_1} &= X_1 + X_2 \quad \quad\quad& \overline{Y_1} &= Y_1 + Y_2\quad \quad\quad & \overline{V_1} &= V_1 + V_2\\
    \overline{X_2} &= \sqrt{k}(X_1 - X_2) \quad \quad\quad& \overline{Y_2} &= \sqrt{k}(Y_1 - Y_2) \quad \quad\quad& \overline{V_2} &= \sqrt{k}(V_1 - V_2)\\
    \overline{X_3} &= X_3 + X_4 \quad \quad\quad& \overline{Z_1} &= Z_1 + Z_2 \quad\quad\quad & \overline{W_1} &= W_1 + W_2\\
    \overline{X_4} &= \sqrt{k}(X_3 - X_4) \quad\quad \quad& \overline{Z_2} &= \sqrt{k}(Z_1 - Z_2) \quad\quad \quad& \overline{W_2} &= \sqrt{k}(W_1 - W_2).
\end{alignat*}
For example we have $\rho_\tau(\overline{X_2}) = \rho_\tau(\sqrt{k}(X_1 - X_2)) = \sqrt{k}(X_2 - X_1) = -\sqrt{k}(X_1 - X_2) = \tau(\sqrt{k})(X_1 - X_2) = \overline{X_2}^\tau$ which shows that indeed $\overline{X_2}$ lies in $\m_k^\Q$.
As one can now calculate, the Lie bracket on $\m^\Q_k$ is given by the relations
\begin{alignat*}{3}
    [\overline{X_1}, \overline{X_3}] &= \overline{Y_1} + \overline{W_1} \quad \quad &[\overline{X_1}, \overline{Y_1}] &= \overline{Z_1}\quad \quad &[\overline{X_1}, \overline{Z_1}] &= \overline{V_1}\\
    [\overline{X_2}, \overline{X_4}] &= k( \overline{Y_1} - \overline{W_1}) \quad \quad &[\overline{X_2}, \overline{Y_2}] &= k\overline{Z_1}\quad \quad &[\overline{X_2}, \overline{Z_2}] &= k\overline{V_1}\\
    [\overline{X_1}, \overline{X_4}] &=  \overline{Y_2} - \overline{W_2}& \quad \quad [\overline{X_1}, \overline{Y_2}] &= \overline{Z_2}\quad \quad& [\overline{X_1}, \overline{Z_2}] &= \overline{V_2}\\
    [\overline{X_2}, \overline{X_3}] &= \overline{Y_2} + \overline{W_2}& \quad \quad [\overline{X_2}, \overline{Y_1}] &= \overline{Z_2}& \quad \quad [\overline{X_2}, \overline{Z_1}] &= \overline{V_2}\\
    \\
    [\overline{X_3}, \overline{V_1}] &= \overline{W_1}\quad \quad &[\overline{Z_1}, \overline{Y_1}] &= \overline{W_1}\\
    [\overline{X_4}, \overline{V_2}] &= k\overline{W_1}\quad \quad &[\overline{Z_2}, \overline{Y_2}] &= k\overline{W_1}\\
    [\overline{X_3}, \overline{V_2}] &= \overline{W_2}\quad \quad& [\overline{Z_1}, \overline{Y_2}] &= \overline{W_2}\\
    [\overline{X_4}, \overline{V_1}] &= \overline{W_2}& \quad \quad [\overline{Z_2}, \overline{Y_1}] &= \overline{W_2}.
\end{alignat*}
Clearly we have that $\faktor{\m_k^\Q}{[\m_k^\Q,[\m_k^\Q, \m_k^\Q]]}$ is a rational Lie algebra of type $(4, 2)$. In \cite{laur08-1} a complete list of rational Lie algebras of type $(4, 2)$ up to isomorphism is given. From this it follows that if $\frac{k}{l}$ is not the square of an integer, then the rational Lie algebras $\m_k^\Q$ and $\m_l^\Q$ are non-isomorphic. On the other hand, if $k = l m^2$ for some integer $m \in \Z$, then the fields $\Q(\sqrt{k}) = \Q(\sqrt{l})$ are equal and hence the Lie algebras $\m_k^\Q= \m_l^\Q$ by the construction above.

We conclude that the Lie algebras $\m_k^\Q$ have the claimed properties. In the following section we will use this family to give a family of Anosov Lie algebras without expanding automorphism in every dimension $\geq 14$.

\section{Indecomposable factors of Anosov Lie algebras}
\label{sec:decom}

Every Lie algebra has an essentially unique decomposition into indecomposable factors. In this section, we demonstrate that the existence of an Anosov automorphism is equivalent to the existence of an Anosov automorphism on every indecomposable factor, and the identical question for the existence of a positive grading. The special case of abelian factors and Anosov automorphisms was considered in \cite{lw08-1}. In particular, the results of this section allow us to construct families of Anosov Lie algebras with no expanding maps in every dimension $\geq 14$. 
%When classifying Anosov Lie algebras, it is natural to ask if one can decompose an Anosov Lie algebra into indecomposable summands which are Anosov as well. In this section we will try to describe how such a decomposition can be realized. The main result is Theorem \ref{thm:directSumAnosov}.

Recall that a Lie algebra $\n^E$ is called indecomposable if it is not equal to the direct sum of two non-trivial ideals, i.e.~if $\n^E= \g \oplus \h$ with $\g$ and $\h$ ideals of $\n^E$, then either $\g = 0$ or $\h=0$. Since we only consider finite-dimensional Lie algebras in this paper, every Lie algebra $\n^E$ has a decomposition $\n^E= \g_1 \oplus \ldots \oplus \g_k$, with the ideals $\g_i$ indecomposable. In this case we call $\g_i$ a factor of $\n^E$. From \cite{fgh13-1} it follows that such a decomposition is unique up to isomorphism and up to reordering the factors $\g_i$. Note that being indecomposable depends on the field over which you are working.

The first step we need is a description of the automorphisms on a direct sum of indecomposable Lie algebras. The following result was proven in \cite[Theorem 3.4]{fgh13-1} for real Lie algebras, but since the proof does not depend on the field over which the Lie algebra is defined, we can state it here for a general Lie algebra $\n^E$ defined over a field $E$ which extends the rationals $\Q$.

\begin{theorem}
\label{thm:autoDirectSum}
    Let $\n^E$ be a Lie algebra over $E$ with a decomposition $\n^E = \g_1 \oplus \ldots \oplus \g_k$ into indecomposable factors. A bijective linear map $\phi:\n^E \to \n^E$ is an automorphism of $\n^E$ if and only if it has the form $\theta + \eta$, where $\theta$ is an automorphism of $\n^E$ which maps each $\g_i$ to itself or to an isomorphic summand $\g_j$, and where $\eta$ is a linear map of $\n^E$ such that $\eta(\n^E) \subset Z(\n^E)$ and $\eta([\n^E, \n^E]) = \{0\}$.
\end{theorem}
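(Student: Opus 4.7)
The statement is a biconditional; the ``if'' direction is a short bracket calculation, while the ``only if'' direction is the substantive Krull--Schmidt content. For the easy direction, assuming $\phi = \theta + \eta$ with $\theta$ permuting isomorphic summands and $\eta$ as described, only bracket preservation needs checking since bijectivity is given. Expanding $[\phi(x),\phi(y)] = [\theta(x)+\eta(x),\theta(y)+\eta(y)]$, the three terms involving an $\eta$ vanish because $\eta(x),\eta(y)\in Z(\n^E)$, leaving $[\theta(x),\theta(y)] = \theta([x,y])$. On the other hand $\phi([x,y]) = \theta([x,y]) + \eta([x,y]) = \theta([x,y])$ since $\eta$ kills $[\n^E,\n^E]$, and the two agree.

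For the converse, let $\phi\in\Aut(\n^E)$. Writing $p_j:\n^E\to\g_j$ for the projection associated with the decomposition, set $\phi_{ji} := p_j\circ \phi|_{\g_i}: \g_i \to \g_j$. A preliminary observation is that because each $\g_k$ is an ideal with $[\g_k,\g_l]=0$ for $k\neq l$, every block component $\phi_{ji}$ is itself a Lie algebra homomorphism: projecting the identity $\phi([x,y])=[\phi(x),\phi(y)]$ onto $\g_j$ for $x,y\in\g_i$ kills all the cross terms and leaves $\phi_{ji}([x,y])=[\phi_{ji}(x),\phi_{ji}(y)]$. The plan is then to invoke the Krull--Schmidt theorem for finite-dimensional Lie algebras (as in \cite{fgh13-1}, whose argument is field-independent) to obtain a permutation $\pi$ of $\{1,\ldots,k\}$ with $\g_i\cong\g_{\pi(i)}$ such that the diagonal components $\phi_{\pi(i),i}$ can be chosen as Lie algebra isomorphisms. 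Setting $\theta := \bigoplus_i \phi_{\pi(i),i}$ then defines an automorphism of $\n^E$ of the required form, and $\eta := \phi-\theta$ restricts on each $\g_i$ to $\sum_{j\neq\pi(i)}\phi_{ji}$.

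The main obstacle — and where the real work lies — is the remaining step: verifying that every off-diagonal homomorphism $\phi_{ji}$ with $j\neq\pi(i)$ sends $\g_i$ into $Z(\g_j)$ and vanishes on $[\g_i,\g_i]$. Combined with the elementary fact that $Z(\g_j)\subset Z(\n^E)$ (forced once again by $[\g_j,\g_l]=0$ for $l\neq j$), this is exactly what yields $\eta(\n^E)\subset Z(\n^E)$ and $\eta([\n^E,\n^E])=0$. Establishing the two containments is the delicate Krull--Schmidt content: one applies a Fitting-type decomposition to suitable endomorphisms of the indecomposable $\g_j$, using that the endomorphism ring of a finite-dimensional indecomposable nilpotent Lie algebra is local modulo its nilpotent radical, so that a non-invertible morphism between indecomposable summands must factor through the derived subalgebra at the source and land in the center at the target. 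This is the technical content of Theorem~3.4 of \cite{fgh13-1}, and as already noted the argument is insensitive to the ground field, so it carries over verbatim to an arbitrary field extension $E\supset \Q$.
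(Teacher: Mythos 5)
The paper does not actually prove this theorem: it simply cites \cite[Theorem 3.4]{fgh13-1} and remarks that the proof there is field-independent. Your plan ultimately also defers the Krull--Schmidt content to \cite{fgh13-1}, so the reliance is the same; your ``if'' direction is correct, and your observation that the block components $\phi_{ji} := p_j \circ \phi|_{\g_i}$ are Lie algebra homomorphisms is right and cleanly justified. But the sketch of why the off-diagonal blocks behave as claimed contains a false statement. You assert that ``a non-invertible morphism between indecomposable summands must factor through the derived subalgebra at the source and land in the center at the target,'' attributing this to indecomposability together with a Fitting/local-ring argument. This is not true as a standalone statement: take $\mathfrak{h}_3$ with basis $X, Y, Z$ and $[X,Y]=Z$, and define $\psi: \mathfrak{h}_3 \to \mathfrak{h}_3$ by $X \mapsto X$, $Y \mapsto 0$, $Z \mapsto 0$. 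This is a non-invertible Lie algebra endomorphism of an indecomposable algebra whose image $\langle X\rangle$ is not central. (There is also no ``endomorphism ring'' in the Lie category --- sums of Lie morphisms are in general not Lie morphisms --- so the local-ring-modulo-nilradical phrasing does not literally apply. And you have inserted ``nilpotent'' into the hypothesis, which the theorem does not assume.)

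The correct reason the off-diagonal blocks land in the center is the automorphism constraint interacting with the surjective diagonal block, and it is in fact the \emph{easy} part, not ``where the real work lies.'' Concretely, for $x \in \g_i$ and $y \in \g_{i'}$ with $i \neq i'$, projecting $0 = \phi([x,y]) = [\phi(x),\phi(y)]$ onto $\g_j$ gives $[\phi_{ji}(x), \phi_{ji'}(y)] = 0$. Take $j \neq \pi(i)$ and $i' := \pi^{-1}(j)$; then $i' \neq i$, and $\phi_{ji'}$ is an isomorphism onto $\g_j$ by the Krull--Schmidt step, hence surjective, so $\phi_{ji}(\g_i)$ commutes with all of $\g_j$, i.e.\ $\phi_{ji}(\g_i) \subset Z(\g_j) \subset Z(\n^E)$. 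Then $\phi_{ji}([\g_i,\g_i]) = [\phi_{ji}(\g_i),\phi_{ji}(\g_i)] = 0$, giving both containments at once. The heavy lifting is entirely in the preceding Krull--Schmidt identification of the permutation $\pi$ with invertible diagonal blocks $\phi_{\pi(i),i}$, which is indeed the technical content of \cite{fgh13-1} and which you have correctly cited but misdescribed.
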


To prove the main theorems, we will apply this result to an Anosov automorphism or an expanding automorphism. Since the properties we consider are invariant under taking non-zero powers, the following lemma will be useful.

\begin{lemma}
    \label{lem:powerAutoDirectSum}
    Let $\n^E = \g_1 \oplus \ldots \oplus \g_k$ be a direct sum of indecomposable Lie algebras and $\phi$ an automorphism of $\n^E$. There exists an integer $l > 0$ such that $\phi^l$ has a decomposition $\phi^l = \theta_l + \eta_l$ satisfying the properties of Theorem \ref{thm:autoDirectSum} and with $\theta_l(\g_i) = \g_i$ for all $1 \leq i \leq k$.
\end{lemma}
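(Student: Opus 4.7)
The plan is to apply Theorem~\ref{thm:autoDirectSum} directly to $\phi$ and then raise the resulting decomposition to a suitable power. Write $\phi = \theta + \eta$ as in the theorem; then $\theta$ permutes the indecomposable summands, inducing a permutation $\sigma \in S_k$ by the rule $\theta(\g_i) = \g_{\sigma(i)}$, with $\g_i \cong \g_{\sigma(i)}$. Let $d$ denote the order of $\sigma$ in $S_k$ (so in particular $d$ divides $k!$) and set $l := d$. My candidate decomposition is
\begin{equation*}
    \theta_l := \theta^l, \qquad \eta_l := \phi^l - \theta^l.
\end{equation*}
Since $\theta$ is a Lie algebra automorphism so is $\theta^l$, and because $\sigma^l = \mathrm{id}$ we immediately obtain $\theta_l(\g_i) = \g_{\sigma^l(i)} = \g_i$ for every $i$. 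It therefore remains only to verify that $\eta_l$ satisfies the two conditions $\eta_l(\n^E) \subset Z(\n^E)$ and $\eta_l([\n^E, \n^E]) = \{0\}$.

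For this I would expand $\phi^l = (\theta + \eta)^l$ non-commutatively as a sum of $2^l$ compositions, one per length-$l$ word in the letters $\theta$ and $\eta$. The pure word $\theta \circ \cdots \circ \theta$ contributes exactly $\theta^l$; every remaining summand contains at least one factor equal to $\eta$, and these mixed words make up $\eta_l$. Since $\theta$ is an automorphism, both $Z(\n^E)$ and $[\n^E, \n^E]$ are characteristic ideals and hence $\theta$-invariant. Applied to an arbitrary $v \in \n^E$ and read from right to left, a mixed word sends $v$ into $Z(\n^E)$ as soon as the first $\eta$-factor is applied, and every subsequent letter, $\theta$ or $\eta$, keeps the image inside $Z(\n^E)$. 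For the kernel condition, applied to $v \in [\n^E, \n^E]$, every $\theta$ occurring before the first $\eta$ preserves $[\n^E, \n^E]$, and that first $\eta$ then sends the element to $0$, after which the remaining letters produce $0$ as well. Summing over all mixed words, $\eta_l$ inherits both properties, and $\phi^l = \theta_l + \eta_l$ is the desired decomposition.

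I do not anticipate a substantive obstacle: the two genuine inputs are the $\theta$-invariance of $Z(\n^E)$ and $[\n^E, \n^E]$ (which holds for any Lie algebra automorphism) together with the defining properties of $\eta$ coming from Theorem~\ref{thm:autoDirectSum}; the remainder is the combinatorial bookkeeping of the non-commutative expansion. One should note in particular that it is neither needed nor generally true that $\theta$ and $\eta$ commute, which is why the argument is phrased in terms of arbitrary length-$l$ words rather than a commutative binomial formula.
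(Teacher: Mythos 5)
Your proof is correct and takes essentially the same approach as the paper: both set $\theta_l = \theta^l$ and $\eta_l = \phi^l - \theta^l$ with $l$ the order of the permutation $\sigma$ induced by $\theta$ on the summands. The only cosmetic difference is that the paper verifies the two conditions on $\eta_l$ by a short induction via the recursion $\eta_{m+1} = \theta\eta_m + \eta\phi^m$, whereas you expand $(\theta+\eta)^l$ into $2^l$ non-commutative words and argue term by term; the two verifications are equivalent.
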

\begin{proof}
Given the decomposition $\phi =\theta + \eta$, we give a decomposition for the map $\phi^m$. Let us first define for all positive integers $m > 0$ the linear maps $\eta_m = \phi^m - \theta^m$. 

We start by proving inductively that $\eta_m(\n^E) \subset Z(\n^E)$ and $\eta_m([\n^E, \n^E]) = \{0\}$ for any $m > 0$. For $m = 1$ we have $\eta_1 = \eta$ and thus by the assumption it holds that $\eta_1(\n^E) \subset Z(\n^E)$ and $\eta_1([\n^E, \n^E]) = \{0\}$. Now assume that $\eta_m(\n^E) \subset Z(\n^E)$ and $\eta_m([\n^E, \n^E]) = \{0\}$ for a fixed $m > 0$. We can rewrite $\eta_{m+1}$ as
    \begin{equation*}
        \eta_{m +1} = \phi^{m+1} - \theta^{m +1} = (\theta + \eta)\phi^{m} - \theta \theta^m = \theta \eta_m + \eta \phi^m.
    \end{equation*}
    Since $\theta$ and $\phi^m$ are automorphisms, they preserve both the center $Z(\n^E)$ and the derived algebra $[\n^E, \n^E]$. This fact together with the induction hypothesis clearly gives $\eta_{m+1}(\n^E) \subset Z(\n^E)$ and $\eta_{m+1}([\n^E, \n^E]) = \{0\}$. 
    
    Note that the automorphism $\theta$ induces a permutation $\sigma \in S_k$ such that $\theta(\g_i) = \g_{\sigma(i)}$ for all $1 \leq i \leq k$. Let $l$ be the order of this permutation $\sigma$. It follows that $\theta^l(\g_i) = \g_i$ for all $1 \leq i \leq k$. We can decompose the automorphism $\phi^l$ as $\phi^l = \theta^l + (\phi^l - \theta^l) = \theta^l + \eta_l$ which now satisfies all properties of the above theorem for the automorphism $\phi^l$. If we set $\theta_l = \theta^l$, then $\theta_l(\g_i) = \g_i$ for all $1 \leq i \leq k$, which proves the lemma.
\end{proof}

Note that when studying decompositions of Lie algebras, often abelian factors behave differently from other factors of the Lie algebra, see \cite[Theorem 3.1.]{dere20-1}. 
%A Lie algebra $\n^E$ over some field $E$ has an abelian factor if it can be written as a direct sum $\n^E = \tilde{\n} \oplus \mathfrak{a}$, i.e. both $\tilde{\n}$ and $\mathfrak{a}$ are ideals in $\n^E$, and where $\mathfrak{a}$ is an abelian Lie algebra. We say $\mathfrak{a}$ is non-trivial if its dimension is strictly positive. 
It is immediate that a subspace $\mathfrak{a} \subset \n^E$ is an abelian factor if and only if it is contained in the center $Z(\n^E)$ with $\mathfrak{a} \cap [\n^E, \n^E] = 0$. Therefore we define the number $m(\n^E) = \dim Z(\n^E) -  \dim Z(\n^E) \cap [\n^E, \n^E]$, which is exactly the dimension of any maximal abelian factor. Note that this number does not depend on the field, i.e. if $E \subset F \subset \C$ is an extension of fields, we have that $m(\n^E) = m(\n^F = F \otimes_E \n^E)$. In \cite[Theorem 3.1]{lw08-1} it has been proved that whenever a rational Anosov Lie algebra $\n^\Q = \tilde{\n} \oplus \mathfrak{a}$ has a non-trivial maximal abelian factor $\mathfrak{a}$, both $\tilde{\n}$ and $\mathfrak{a}$ are Anosov, where the last condition is equvalent to $m(\n^\Q) \geq 2$. We generalize this result by using the following two lemma's, with their proofs following the idea from \cite{lw08-1}.

\begin{lemma}
    \label{lem:blockDiagHypIntLike}
    If $M \in \GL_n(\Q)$ is an integer-like hyperbolic matrix which is block-diagonal:
    \begin{align*}
        M = \begin{pmatrix} M_1&&\\
        &\ddots&\\
        &&M_k
        \end{pmatrix},
    \end{align*}
    then each $M_i$ is integer-like and hyperbolic as well.
\end{lemma}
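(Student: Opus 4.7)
The plan is to split the two conditions (hyperbolic and integer-like) and handle them separately, using the block-diagonal structure to relate the characteristic polynomial of $M$ to those of the $M_i$.

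For hyperbolicity, the observation is simply that the spectrum of a block-diagonal matrix is the union of the spectra of its blocks: $\operatorname{Spec}(M) = \bigcup_{i=1}^k \operatorname{Spec}(M_i)$. Since $M$ has no eigenvalue of absolute value $1$, neither does any $M_i$. This step is immediate.

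The integer-like condition is the part requiring some care. Because $M$ is block-diagonal, its characteristic polynomial factors as
\begin{equation*}
    p_M(x) = \prod_{i=1}^k p_{M_i}(x),
\end{equation*}
where each $p_{M_i}(x) \in \Q[x]$ is monic (since $M_i \in \GL_{n_i}(\Q)$). I would invoke Gauss's lemma: if a monic polynomial with integer coefficients factors into monic polynomials over $\Q$, then each factor already lies in $\Z[x]$. Applied iteratively to the factorization above, this yields $p_{M_i}(x) \in \Z[x]$ for every $i$, so each $M_i$ has integer characteristic polynomial.

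It remains to show $\det(M_i) = \pm 1$ for each $i$. From the previous step, each $\det(M_i)$ is (up to sign) the constant term of $p_{M_i}(x) \in \Z[x]$, hence an integer. Since $\det$ is multiplicative on block-diagonal matrices,
\begin{equation*}
    \pm 1 = \det(M) = \prod_{i=1}^k \det(M_i),
\end{equation*}
and the only factorizations of $\pm 1$ into integers are with each factor equal to $\pm 1$. Therefore $\det(M_i) = \pm 1$ for all $i$, completing the proof. The only nontrivial input is Gauss's lemma, which is a standard result; aside from this, the argument is a direct computation from the block-diagonal structure.
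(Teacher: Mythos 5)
Your proof is correct, and it takes a somewhat different route from the paper's for the integer-like part. The paper argues directly with algebraic number theory: since the roots of $f = \prod_i f_i$ are algebraic units, the roots of each $f_i$ are too; the coefficients of $f_i$ are therefore algebraic integers (being symmetric functions of algebraic integers) that are also rational, hence lie in $\Z$; and the constant term of $f_i$ is an algebraic unit lying in $\Z$, hence $\pm 1$. You instead invoke Gauss's lemma to get $f_i \in \Z[x]$ and then use multiplicativity of the determinant together with the fact that the only integer factorizations of $\pm 1$ have all factors $\pm 1$. Both are short and elementary; your version avoids any mention of algebraic integers or units and stays purely inside $\Z[x]$ and divisibility in $\Z$, while the paper's version fits more naturally with the algebraic-unit viewpoint that the rest of Section 5 relies on (e.g.\ the notion of rank of the multiplicative group generated by the eigenvalues). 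Either is a fine proof of the lemma as stated.
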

\begin{proof}
    Denote by $f, f_1, \ldots, f_k$ the characteristic polynomials of $M, M_1, \ldots, M_k$, respectively, then we have that $f = \displaystyle \prod_{i=1}^k f_i$. This immediately implies that if the $M_i$ are integer-like and hyperbolic, then also $M$ is integer-like and hyperbolic. 

    For the other direction, we know that the roots of $f$ are hyperbolic algebraic units and therefore the roots of each polynomial $f_i$ are hyperbolic algebraic units as well. Using that the algebraic integers form a ring, we find that the coefficients of the polynomials $f_i$ are algebraic integers. Since these coefficients also have to be rational, we must conclude that each $f_i$ is an integer polynomial. The constant term of each $f_i$ is a product of algebraic units and thus an algebraic unit itself. Since they also have to lie in $\Z$ we get that the constant term of each $f_i$ is equal to $\pm 1$. This shows that each matrix $M_i$ is hyperbolic and integer-like.
\end{proof}

\begin{lemma}
    \label{lem:AutMaxAbelianFactorAndComplement}
    Let $\n^E$ be a Lie algebra with a decomposition $\n^E = \mathfrak{a} \oplus \g$ where $\mathfrak{a}$ is a maximal abelian factor of $\n^E$. Then for any semi-simple automorphism $\phi \in \Aut(\n^E)$ there exists an automorphism $\varphi \in \Aut(\n^E)$ such that $(\varphi \phi \varphi^{-1})(\mathfrak{a}) = \mathfrak{a}$ and $(\varphi \phi \varphi^{-1})(\g) = \g$.
\end{lemma}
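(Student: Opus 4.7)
My strategy is to first construct a $\phi$-invariant direct-sum decomposition $\n^E = \mathfrak{a}' \oplus \g''$, in which $\mathfrak{a}'$ is a maximal abelian factor and $\g''$ an ideal complement, and then to exhibit a Lie algebra automorphism $\varphi$ with $\varphi(\mathfrak{a}') = \mathfrak{a}$ and $\varphi(\g'') = \g$. The conjugate $\varphi \phi \varphi^{-1}$ will then preserve $\mathfrak{a}$ and $\g$, because $\varphi^{-1}$ sends them back to the $\phi$-invariant subspaces $\mathfrak{a}'$ and $\g''$.

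The characteristic subspaces $Z(\n^E)$ and $Z(\n^E) \cap [\n^E, \n^E]$ are preserved by $\phi$. Since $\phi$ is semi-simple on $Z(\n^E)$, I can pick a $\phi$-invariant complement $\mathfrak{a}'$ of $Z(\n^E) \cap [\n^E, \n^E]$ inside $Z(\n^E)$; this $\mathfrak{a}'$ is central, disjoint from $[\n^E, \n^E]$, and of dimension $m(\n^E) = \dim \mathfrak{a}$. The subspace $\mathfrak{a}' + [\n^E, \n^E]$ is again $\phi$-invariant, and a second application of semi-simplicity produces a $\phi$-invariant complement $U$ in $\n^E$. Then $\g'' := [\n^E, \n^E] \oplus U$ is $\phi$-invariant, contains $[\n^E, \n^E]$ and hence is an ideal, and is complementary to $\mathfrak{a}'$.

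To construct $\varphi$, I use that $\mathfrak{a}$ is \emph{maximal} abelian: every indecomposable factor of $\g$ must then be non-abelian, and for any indecomposable non-abelian nilpotent Lie algebra the centre is contained in the derived subalgebra (otherwise a central element outside the derived subalgebra would split off a one-dimensional abelian ideal, contradicting indecomposability). This gives $Z(\g) \subset [\g, \g] = [\n^E, \n^E]$. Combined with $\mathfrak{a}' \subset Z(\n^E)$ and $\g \cap Z(\n^E) = Z(\g)$, we obtain $\g \cap \mathfrak{a}' \subset Z(\g) \cap \mathfrak{a}' \subset [\n^E, \n^E] \cap \mathfrak{a}' = 0$, and symmetrically $\g'' \cap \mathfrak{a} = 0$. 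Both $\mathfrak{a}$ and $\mathfrak{a}'$ project isomorphically onto $Z(\n^E) / (Z(\n^E) \cap [\n^E, \n^E])$, so there is a canonical linear bijection $\iota : \mathfrak{a}' \to \mathfrak{a}$; and the projection $\pi : \g'' \to \g$ along $\mathfrak{a}$ is a linear bijection by the intersection calculation. I then take $\varphi := \iota \oplus \pi$ with respect to $\n^E = \mathfrak{a}' \oplus \g''$; it is bijective by construction, and a short check using $\mathfrak{a} \subset Z(\n^E)$ together with $[\n^E, \n^E] \subset \g$ (which makes $\pi$ bracket-preserving) shows it lies in $\Aut(\n^E)$.

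The main obstacle I anticipate is this matching step in the third paragraph: a priori there is no reason why the $\phi$-invariant decomposition $\mathfrak{a}' \oplus \g''$ should be related to the given decomposition $\mathfrak{a} \oplus \g$ by an automorphism, and the projection $\g'' \to \g$ might very well fail to be injective. Resolving this depends crucially on the structural fact that maximality of $\mathfrak{a}$ forces $Z(\g) \subset [\n^E, \n^E]$, which is precisely what makes the projections between the two decompositions injective and turns $\iota \oplus \pi$ into a genuine Lie algebra automorphism.
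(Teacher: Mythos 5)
Your proof is correct and follows essentially the same route as the paper: build a $\phi$-invariant decomposition into a maximal abelian factor and an ideal complement using semi-simplicity, then match it to the given decomposition $\mathfrak{a}\oplus\g$ via a Lie algebra automorphism; the paper realizes the matching through quotient isomorphisms $\tilde{\g}\approx(\tilde{\mathfrak{a}}\oplus\tilde{\g})/\tilde{\mathfrak{a}}\approx\g$ where you use the equivalent explicit projection along $\mathfrak{a}$. One small advantage of your variant is that taking $\g''=[\n^E,\n^E]\oplus U$ makes it immediate that $\g''$ is an ideal, whereas the paper's $\tilde{\g}=\mathfrak{b}\oplus\h$ needs the complement $\h$ of $Z(\n^E)$ to be chosen compatibly with $[\n^E,\n^E]$ for the same conclusion, a point left implicit there.
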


\begin{proof}
    Let us write $\mathfrak{b} = Z(\n^E) \cap [\n^E, \n^E]$. Since $\phi$ is an automorphism, it must preserve the center and the derived algebra and therefore also $\mathfrak{b}$. Using that $\phi$ is semi-simple, there exists a complementary subspace $\tilde{\mathfrak{a}}$ to $\mathfrak{b}$ in $Z(\n^E)$ for which $\phi(\tilde{\mathfrak{a}}) = \tilde{\mathfrak{a}}$. Clearly $\tilde{\mathfrak{a}}$ is a maximal abelian factor of $\n^E$. Since $m(\g) = 0$, it follows that $\tilde{\mathfrak{a}} \cap \g = \{0\}$ and thus that $\n^E = \tilde{\mathfrak{a}} \oplus \g$. Using again that $\phi$ is semi-simple, we get a subspace $\h$ which is a complementary subspace to $Z(\n^E)$ with $\phi(\h) = \h$. This gives the $\phi$-invariant decomposition
    \begin{equation*}
        \n^E = \tilde{\mathfrak{a}} \oplus \underbrace{\mathfrak{b} \oplus \h}_{:= \tilde{\g}}.
    \end{equation*}
    An easy check shows that the subspace $\tilde{\mathfrak{g}}$ is an ideal of $\n^E$. We now clearly get the isomorphisms of Lie algebras $\tilde{\mathfrak{a}} \approx \mathfrak{a}$ and $\tilde{\g} \approx (\tilde{\mathfrak{a}} \oplus \tilde{\g})/\tilde{\mathfrak{a}} \approx (\tilde{\mathfrak{a}} \oplus \g)/\tilde{\mathfrak{a}} \approx \g$. By applying these isomorphisms component-wise we get a map $\varphi: \tilde{\mathfrak{a}} \oplus \tilde{\g} \to \mathfrak{a} \oplus \mathfrak{g}$ which is the automorphism on $\n^E$ that we want.
\end{proof}

Now we are ready to prove the main theorems of this section.

\begin{theorem}
    \label{thm:AutomorphismsOnDirectSumOfIndecomp}
    Let $\n^E$ be a Lie algebra with decomposition
    \begin{equation*}
        \n^E = \mathfrak{a} \oplus \g_1 \oplus \ldots \oplus \g_k
    \end{equation*}
    into factors with $\g_i$ non-abelian and indecomposable and $\mathfrak{a}$ abelian. Then for any automorphism $\phi \in \Aut(\n^E)$ there exists an integer $l > 0$ and an automorphism $\tilde{\phi} \in \Aut(\n^E)$ which has the same characteristic polynomial as $\phi^l$ and such that $\tilde{\phi}(\mathfrak{a}) = \mathfrak{a}$ and $\tilde{\phi}(\g_i) = \g_i$ for all $1 \leq i \leq k$.
\end{theorem}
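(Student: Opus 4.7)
The plan is to reduce to the semisimple case via Jordan decomposition, then use Lemma~\ref{lem:AutMaxAbelianFactorAndComplement} to peel off the maximal abelian factor, and finally apply Lemma~\ref{lem:powerAutoDirectSum} to the restriction to $\g := \g_1 \oplus \ldots \oplus \g_k$ in order to separate the non-abelian indecomposable summands after passing to a power.

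First I would take the Jordan decomposition $\phi = \phi_s \phi_u$ in the algebraic group $\Aut(\n^E)$, so that $\phi_s \in \Aut(\n^E)$ is semisimple and $\phi^m$ has the same characteristic polynomial as $\phi_s^m$ for every $m > 0$. Applying Lemma~\ref{lem:AutMaxAbelianFactorAndComplement} to $\phi_s$ with respect to the decomposition $\n^E = \mathfrak{a} \oplus \g$ produces an automorphism $\phi' := \varphi \phi_s \varphi^{-1}$ of $\n^E$ satisfying $\phi'(\mathfrak{a}) = \mathfrak{a}$ and $\phi'(\g) = \g$, and $\phi'$ still has the same characteristic polynomial as $\phi$. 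Next I would apply Lemma~\ref{lem:powerAutoDirectSum} to the restriction $\phi'|_\g$ with its decomposition into the indecomposable factors $\g_i$, obtaining an integer $l > 0$ and a decomposition
\[ (\phi'|_\g)^l = \theta_l + \eta_l, \]
where $\theta_l \in \Aut(\g)$ satisfies $\theta_l(\g_i) = \g_i$ for all $1 \leq i \leq k$, while $\eta_l(\g) \subseteq Z(\g)$ and $\eta_l([\g, \g]) = 0$.

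The essential step is to check that $\theta_l$ and $(\phi'|_\g)^l$ have the same characteristic polynomial on $\g$. Because each $\g_i$ is non-abelian and indecomposable, it admits no nonzero abelian direct summand, so $m(\g_i) = 0$ and hence $Z(\g_i) \subseteq [\g_i, \g_i]$; taking the direct sum gives $Z(\g) \subseteq [\g, \g]$. Consequently $\eta_l$ maps $\g$ into $[\g, \g]$ and vanishes on $[\g, \g]$, so on the one hand $(\phi'|_\g)^l$ and $\theta_l$ coincide on the invariant subspace $[\g, \g]$, and on the other hand they induce the same linear map on $\g/[\g, \g]$. By the multiplicativity of the characteristic polynomial along the short exact sequence $0 \to [\g, \g] \to \g \to \g/[\g, \g] \to 0$, the two characteristic polynomials agree.

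To conclude, I would define $\tilde{\phi} := (\phi'|_\mathfrak{a})^l \oplus \theta_l$ on $\n^E = \mathfrak{a} \oplus \g$. This is a Lie algebra automorphism because $\mathfrak{a}$ is an abelian direct summand (so brackets across the two factors vanish) and each block is an automorphism of its summand, and by construction $\tilde{\phi}(\mathfrak{a}) = \mathfrak{a}$ and $\tilde{\phi}(\g_i) = \g_i$ for every $i$. Its characteristic polynomial factors as the product of the characteristic polynomials of its two blocks, which by the preceding paragraph equals that of $(\phi'|_\mathfrak{a})^l \oplus (\phi'|_\g)^l = (\phi')^l$, and hence that of $\phi^l$. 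The main obstacle I expect is precisely the middle equality $\chi_{\theta_l} = \chi_{(\phi'|_\g)^l}$; once one recognizes that the absence of abelian factors in each $\g_i$ forces $Z(\g) \subseteq [\g, \g]$, it collapses into a clean filtration argument.
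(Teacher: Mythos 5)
Your proposal is correct and follows essentially the same route as the paper: pass to the semisimple part to preserve the characteristic polynomial, apply Lemma~\ref{lem:AutMaxAbelianFactorAndComplement} to split off $\mathfrak{a}$, then apply Lemma~\ref{lem:powerAutoDirectSum} to the restriction to $\g = \g_1 \oplus \ldots \oplus \g_k$ and observe that $\eta_l$ only shifts by elements of $[\g,\g]$ because $Z(\g) \subseteq [\g,\g]$. The only cosmetic difference is that the paper justifies $\chi_{\theta_l} = \chi_{(\phi'|_\g)^l}$ by exhibiting a block upper-triangular matrix with respect to $\g = [\g,\g] \oplus \h$, whereas you phrase the same fact via the short exact sequence $0 \to [\g,\g] \to \g \to \g/[\g,\g] \to 0$; these are equivalent formulations of the same argument.
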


\begin{proof}
    Since $\Aut(\n^E)$ is an linear algebraic group, the semi-simple part of $\phi$ is also an automorphism of $\n^E$. It has the same characteristic polynomial as $\phi$ and thus we can assume that $\phi$ is semi-simple to start with. 
    
    Write $\g = \g_1 \oplus \ldots \oplus \g_k$. It is clear that $\mathfrak{a}$ is a maximal abelian factor of $\n^E$. From Lemma \ref{lem:AutMaxAbelianFactorAndComplement} we get an automorphism $\varphi \in \Aut(\n^E)$ such that $(\varphi \phi \varphi^{-1})(\mathfrak{a}) = \mathfrak{a}$ and $(\varphi \phi \varphi^{-1})(\g) = \g$. Therefore we get the well-defined restrictions $\phi_1 := (\varphi \phi \varphi^{-1})|_\mathfrak{a}$ and $\phi_2 := (\varphi \phi \varphi^{-1})|_{\g}$. Lemma \ref{lem:powerAutoDirectSum} then gives us an integer $l > 0$ such that $\phi_2^l = \theta + \eta$ with $\theta$ an automorphism on $\g$ such that $\theta(\g_i) = \g_i$ for all $1 \leq i \leq k$ and $\eta$ a linear map on $\g$ such that $\eta(\g) \subset Z(\g)$ and $\eta([\g, \g]) = \{0\}$. Since $\mathfrak{a}$ is a maximal abelian factor of $\n^E$ we must have $m(\g) = 0$. By consequence $Z(\g) \subset [\g, \g]$ and the map $\eta$ now also satisfies $\eta(\g) \subset [\g, \g]$. Write $\h$ for some complement of $[\g, \g]$ in $\g$, the matrix representation of the maps $\theta$ and $\eta$ with respect to the direct sum $\g = [\g, \g] \oplus \h$ takes the form
    \begin{equation*}
        \theta = \begin{pmatrix}
        \theta_{11} & \theta_{12}\\
        0 & \theta_{22}
        \end{pmatrix}, \quad \quad \eta = \begin{pmatrix}
        0 & \eta_{12}\\
        0 & 0\end{pmatrix}.
    \end{equation*}
    From this it follows that $\theta$ has the same characteristic polynomial as $\phi_2^l$. By consequence the automorphism $\tilde{\phi}$ defined by $\tilde{\phi}|_{\mathfrak{a}} = \phi_1^l$ and $\tilde{\phi}|_\g = \theta$ has the same characteristic polynomial as $\phi^l$ and preserves every factor of the decomposition $\n^E = \mathfrak{a} \oplus \g_1 \oplus \ldots \oplus \g_k$. This proves the theorem.
\end{proof}

Combining Theorem \ref{thm:AutomorphismsOnDirectSumOfIndecomp} with Lemma \ref{lem:blockDiagHypIntLike} gives us the result on the existence of Anosov automorphisms on a direct sum of indecomposable factors.

\begin{theorem}
 \label{thm:directSumAnosov}
Let $\n^\Q$ be a rational Lie algebra with a decomposition $$\n^\Q = \mathfrak{a} \oplus \mathfrak{g}_1 \oplus \ldots \oplus \mathfrak{g}_k$$ into factors with $\mathfrak{g}_i$ non-abelian and indecomposable and $\mathfrak{a}$ abelian. The Lie algebra $\n^\Q$ is Anosov if and only if $\dim(\mathfrak{a}) \geq 2$ and every factor $\mathfrak{g}_i$ is Anosov. 
\end{theorem}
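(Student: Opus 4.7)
The argument splits naturally along the biconditional, and both directions rest on the structural results already proved in this section together with Lemma \ref{lem:blockDiagHypIntLike}.

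For the forward direction, the plan is as follows. Assume $\n^\Q$ is Anosov and let $\phi$ be an Anosov automorphism. Being hyperbolic and integer-like is preserved under taking non-zero powers (roots of a power of $\chi_\phi$ are powers of roots of $\chi_\phi$, which remain hyperbolic algebraic units), so any positive power of $\phi$ is again Anosov. Apply Theorem \ref{thm:AutomorphismsOnDirectSumOfIndecomp} to obtain an integer $l>0$ and an automorphism $\tilde{\phi}$ with the same characteristic polynomial as $\phi^l$ such that $\tilde{\phi}(\mathfrak{a}) = \mathfrak{a}$ and $\tilde{\phi}(\g_i)=\g_i$ for each $i$. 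Because $\tilde{\phi}$ shares its characteristic polynomial with $\phi^l$, it is itself hyperbolic and integer-like. In the basis adapted to the decomposition $\n^\Q = \mathfrak{a} \oplus \g_1 \oplus \ldots \oplus \g_k$, the automorphism $\tilde{\phi}$ is block-diagonal, so Lemma \ref{lem:blockDiagHypIntLike} yields that the restriction of $\tilde{\phi}$ to each summand is hyperbolic and integer-like. The restrictions to the $\g_i$ are therefore Anosov automorphisms, witnessing that each $\g_i$ is an Anosov Lie algebra, and the restriction to $\mathfrak{a}$ is a hyperbolic integer-like automorphism of an abelian Lie algebra, which forces $\dim(\mathfrak{a}) \geq 2$: a single rational eigenvalue of modulus different from $1$ cannot have $\pm 1$ as characteristic polynomial in dimension one.

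For the backward direction, assume $\dim(\mathfrak{a}) \geq 2$ and that each non-abelian factor $\g_i$ admits an Anosov automorphism $A_i$. First produce an Anosov automorphism $A_0$ on $\mathfrak{a}$: since $\dim(\mathfrak{a}) \geq 2$, write $\mathfrak{a} = \mathfrak{a}_0 \oplus \mathfrak{a}'$ with $\dim(\mathfrak{a}_0) = 2$, take any hyperbolic algebraic unit $\xi$ of degree $2$ over $\Q$ (Dirichlet's unit theorem, as already invoked in Section \ref{sec:familyofexamples}) and let $A_0$ act on $\mathfrak{a}_0$ as the companion matrix of the minimal polynomial of $\xi$, and on any $2$-dimensional complement by the same block; when $\dim(\mathfrak{a})$ is odd one may add a $3\times 3$ block whose characteristic polynomial is a hyperbolic integer-like cubic. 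Then set $\tilde{A} = A_0 \oplus A_1 \oplus \ldots \oplus A_k$ on $\n^\Q$; this is an automorphism because the $\g_i$ and $\mathfrak{a}$ are ideals, and its characteristic polynomial factors as the product of the $\chi_{A_i}$, each of which has constant term $\pm 1$ and roots off the unit circle. Hence $\tilde{A}$ is integer-like and hyperbolic, i.e.\ an Anosov automorphism of $\n^\Q$.

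The only genuine obstacle is the forward direction's handling of the power $\phi^l$: one must be sure that replacing $\phi$ by a suitable power costs nothing. This is precisely guaranteed by the invariance of the Anosov property under non-zero powers, together with Theorem \ref{thm:AutomorphismsOnDirectSumOfIndecomp}, which provides a factor-preserving representative in the same characteristic-polynomial class. Once the automorphism is block-diagonal, Lemma \ref{lem:blockDiagHypIntLike} does all the remaining work, and the dimension constraint $\dim(\mathfrak{a}) \geq 2$ is read off directly from the fact that a $1\times 1$ integer matrix with determinant $\pm 1$ has eigenvalue on the unit circle.
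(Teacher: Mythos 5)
Your proposal is correct and follows essentially the same route as the paper: both directions hinge on Theorem \ref{thm:AutomorphismsOnDirectSumOfIndecomp} to obtain a factor-preserving Anosov automorphism, and on Lemma \ref{lem:blockDiagHypIntLike} to conclude that the block restrictions are Anosov, while the converse is the direct-sum construction. The only cosmetic difference is that you spell out the construction of a hyperbolic integer-like automorphism on the abelian factor $\mathfrak{a}$ and the invariance of being integer-like under taking powers, which the paper treats as immediate.
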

\begin{proof}
    Note that $\mathfrak{a}$ is Anosov if and only if $\dim(\mathfrak{a}) \geq 2$. Write $\mathfrak{g}_0$ for the abelian Lie algebra $\mathfrak{a}$. If $\mathfrak{n}^\Q = \displaystyle \bigoplus_{i=0}^k \g_i$ and every $\mathfrak{g}_i$ is Anosov with Anosov automorphism $A_i$ respectively, then the automorphism $A$ defined by $A|_{\g_i} = A_i$ is Anosov on $\n$. 
    
    Conversely, assume that $\n$ is Anosov with Anosov automorphism $A$. Let $\tilde{A}$ be the automorphism we get from Theorem \ref{thm:AutomorphismsOnDirectSumOfIndecomp}. It is clear that $\tilde{A}$ is also integer-like hyperbolic and thus an Anosov automorphism on $\n^\Q$ with $\tilde{A}(\g_i) = \g_i$ for all $0 \leq i \leq k$. Using Lemma \ref{lem:blockDiagHypIntLike} we get that $\tilde{A}|_{\g_i}$ defines an Anosov automorphism on $\g_i$ for all $0 \leq i \leq k$. In particular we get that $\dim(\mathfrak{a}) = \dim(\g_0) \geq 2$.
\end{proof}

A similar result is obtained for expanding automorphisms.

\begin{theorem}
 \label{thm:directSumExp}
Let $\n^E$ be a Lie algebra with a decomposition $$\n^E = \mathfrak{a} \oplus \mathfrak{g}_1 \oplus \ldots \oplus \mathfrak{g}_k$$ into factors with $\mathfrak{g}_i$ non-abelian indecomposable and $\mathfrak{a}$ abelian. The Lie algebra $\n^E$ has a positive grading if and only if every factor $\mathfrak{g}_i$ has a positive grading.
\end{theorem}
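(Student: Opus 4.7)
The plan is to prove the two directions separately, following the same template as Theorem \ref{thm:directSumAnosov}.

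For the \emph{if} direction, suppose every non-abelian indecomposable factor $\g_i$ carries a positive grading $\g_i = \bigoplus_{j > 0} (\g_i)_j$. I would equip the abelian factor $\mathfrak{a}$ with the trivial positive grading placing all of $\mathfrak{a}$ in degree $1$, which is allowed because $\mathfrak{a}$ has vanishing bracket. Setting $\g_0 := \mathfrak{a}$ and $\n_j := \bigoplus_{i=0}^{k} (\g_i)_j$ would then yield a candidate positive grading on $\n^E$; the condition $[\n_j, \n_m] \subset \n_{j+m}$ reduces to the same condition inside each factor, since $[\g_i, \g_{i'}] = 0$ for $i \neq i'$, and so holds by assumption.

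For the \emph{only if} direction, I would first convert a positive grading $\n^E = \bigoplus_{j > 0} \n_j$ into an expanding automorphism defined over $E$ by setting $A|_{\n_j} = 2^j \cdot \Id$. The bracket condition on the grading makes $A$ a Lie algebra automorphism (since $A[x,y] = 2^{j+m}[x,y] = [Ax, Ay]$ for $x \in \n_j$, $y \in \n_m$), and its eigenvalues $2^j > 1$ make it expanding. Applying Theorem \ref{thm:AutomorphismsOnDirectSumOfIndecomp} to $A$ then produces an integer $l > 0$ and an automorphism $\tilde{A}$ with the same characteristic polynomial as $A^l$ and satisfying $\tilde{A}(\mathfrak{a}) = \mathfrak{a}$ and $\tilde{A}(\g_i) = \g_i$ for every $i$. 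Since the eigenvalues of $A^l$ are all of the form $2^{jl} > 1$, the map $\tilde{A}$ is again expanding, and its restriction to each invariant subspace $\g_i$ is an expanding automorphism of $\g_i$ (eigenvalues on a stable subspace are a subset of the full spectrum). Using the equivalence between the existence of an expanding automorphism and the existence of a positive grading, together with the field-independence of positive gradings recalled in Section \ref{sec:prelim}, this yields a positive grading on each $\g_i$.

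The main obstacle is really just making sure the right tools are available: once Theorem \ref{thm:AutomorphismsOnDirectSumOfIndecomp} is proved, the rest of the argument is a direct adaptation of the proof of Theorem \ref{thm:directSumAnosov}, with \emph{integer-like and hyperbolic} replaced by \emph{expanding}. A mild subtlety lies in the last step of the reverse direction, where one has to transfer an expanding automorphism on $\g_i$ back to a positive grading over a general subfield $E \subset \C$; this is handled by extending scalars to $\R$ (or $\C$), invoking the equivalence from \cite{dere14-1}, and then using field-independence of positive gradings to descend back to $E$. Note that, unlike Theorem \ref{thm:directSumAnosov}, no lower bound on $\dim \mathfrak{a}$ appears here, because every abelian Lie algebra, regardless of dimension, trivially admits a positive grading.
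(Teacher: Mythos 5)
Your proof is correct and follows essentially the same route as the paper's, resting on Theorem \ref{thm:AutomorphismsOnDirectSumOfIndecomp} and the equivalence between positive gradings and expanding automorphisms. The only differences are cosmetic: in the forward direction you assemble the grading on $\n^E$ directly from the gradings on the factors, whereas the paper passes through expanding automorphisms on each factor, and in the reverse direction you spell out the field-change bookkeeping (extending to $\R$ or $\C$ and descending back) that the paper leaves implicit.
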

\begin{proof}
    Note that the existence of a positive grading on $\n^E$ is equivalent to the existence of an expanding automorphism on $\n^E$ and that an abelian Lie algebra always admits a positive grading, namely the trivial one. Assume each factor $\g_i$ admits an expanding automorphism $\phi_i$. Write $\g_0$ for the abelian Lie algebra $\mathfrak{a}$ and let $\phi_0$ be an expanding automorphism on $\g_0$. The automorphism $\phi$ on $\n^E$ defined by $\phi|_{\g_i} = \phi_i$ for all $0 \leq i \leq k$ is an expanding automorphism on $\n^E$.
    
    Conversely, assume that $\n^E$ admits an expanding automorphism $\phi$. Theorem \ref{thm:AutomorphismsOnDirectSumOfIndecomp} gives us an integer $l > 0$ and an automorphism $\tilde{\phi}$ with the same characteristic polynomial as $\phi^l$ and such that $\tilde{\phi}(\g_i) = \g_i$ for all $0\leq i \leq k$. It is clear that each $\tilde{\phi}|_{\g_i}$ is an expanding automorphism as well and thus that each $\mathfrak{g}_i$ has a positive grading.
\end{proof}

\begin{cor}
For every dimension $n \geq 14$, there exists an Anosov Lie algebra $\n^\Q$ with no positive grading.
\end{cor}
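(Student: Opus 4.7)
The plan is to adjoin an abelian factor to the 12-dimensional family $\m_k^\Q$ constructed in Section \ref{sec:familyofexamples}. Specifically, for each $n \geq 14$ I would set
\[
    \n^\Q := \m_k^\Q \oplus \Q^{n-12},
\]
where $\Q^{n-12}$ denotes the abelian rational Lie algebra of dimension $n - 12$. This has the correct dimension $n$, so it remains to verify the two desired properties.

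For the Anosov property, I would start by decomposing $\m_k^\Q = \mathfrak{a}_0 \oplus \g_1 \oplus \ldots \oplus \g_s$ into its maximal abelian factor $\mathfrak{a}_0$ and its non-abelian indecomposable factors $\g_i$. Since $\m_k^\Q$ is Anosov, the forward direction of Theorem \ref{thm:directSumAnosov} applied to $\m_k^\Q$ forces each $\g_i$ to be Anosov. Then $\n^\Q$ decomposes as
\[
    \n^\Q = (\mathfrak{a}_0 \oplus \Q^{n-12}) \oplus \g_1 \oplus \ldots \oplus \g_s,
\]
whose abelian factor has dimension at least $n - 12 \geq 2$, while every non-abelian indecomposable factor is Anosov. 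The converse direction of Theorem \ref{thm:directSumAnosov} then yields that $\n^\Q$ is Anosov.

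For the absence of a positive grading, the non-abelian indecomposable factors of $\n^\Q$ are precisely $\g_1, \ldots, \g_s$. Because $\m_k^\Q$ has no positive grading, Theorem \ref{thm:directSumExp} applied to $\m_k^\Q$ guarantees that at least one $\g_i$ has no positive grading. A second application of Theorem \ref{thm:directSumExp}, now to $\n^\Q$, rules out any positive grading on $\n^\Q$.

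No step in this argument presents a real obstacle; the whole construction is bookkeeping on the indecomposable decomposition. The only place where the precise hypothesis $n \geq 14$ (rather than $n \geq 13$) intervenes is the inequality $\dim(\Q^{n-12}) \geq 2$, needed to satisfy the dimension condition on the abelian factor in Theorem \ref{thm:directSumAnosov}, since a priori we have no control over whether $\m_k^\Q$ itself contains any abelian factor.
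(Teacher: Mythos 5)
Your proof is correct and uses exactly the construction the paper uses, namely forming $\m_k^\Q \oplus \Q^{n-12}$ and invoking Theorems \ref{thm:directSumAnosov} and \ref{thm:directSumExp}; you have simply spelled out the bookkeeping about indecomposable factors that the paper leaves implicit. Your closing remark about why $n\geq 14$ rather than $n\geq 13$ is also a fair observation, though one can verify directly that $Z(\m_k^\Q)\subset[\m_k^\Q,\m_k^\Q]$, so $\m_k^\Q$ in fact has no abelian factor.
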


\begin{proof}
This follows immediately by considering the Lie algebra $\m_k^\Q \oplus \Q^{n - 12}$, which satisfies the properties by Theorem \ref{thm:directSumAnosov} and \ref{thm:directSumExp}.
\end{proof}

As another consequence, these results imply that an example of minimal dimension must be indecomposable, reducing the possibilities in the following sections.

\section{Rank of Anosov automorphisms}
\label{sec:rank}

%\begin{lemma}
%	\label{lem:sumpositivegrading}
%Let $\n$ and $\n^\prime$ be two Lie algebras which have a positive grading, then the direct sum $\n \oplus \n^\prime$ has a positive grading. 
%\end{lemma}
%\begin{proof}
%This is immediate from the definition of positive grading on a Lie algebra. 
%\end{proof}

	By definition, the eigenvalues of an Anosov automorphism are algebraic units, with certain relations induced by the fact that they are eigenvalues of an automorphisms of the Lie algebra. Therefore it is important to study the multiplicative group generated by these eigenvalues. We will define the rank of an Anosov automorphism as the rank of this abelian group, i.e. the maximal number of $\mathbb{Z}$-independent elements. Using the rank will prove useful for constructing positive gradings on Lie algebras. We start by introducing some terminology, partially coming from \cite{payn09-1}.
	
	Let us call a monic polynomial $f \in \Z[X]$ an \textit{Anosov polynomial} if it has only roots of absolute value different from 1 and constant term equal to $\pm 1$. The Anosov polynomials are the exactly the characteristic polynomials of Anosov automorphisms. Note that there are no Anosov polynomials of degree one and that by Lemma \ref{lem:blockDiagHypIntLike} the irreducible factors of an Anosov polynomial over $\Q$ are Anosov as well. Let us fix an Anosov polynomial $f$ of degree $k$ with roots $\lambda_1, \ldots, \lambda_k$. We define the rank of $f$ to be the rank of the abelian multiplicative group generated by the roots $\lambda_1, \ldots, \lambda_k$. This group can also be seen as the image of the following map
	\begin{align}
	    \label{eq:mapPhi}
	    \phi_f: \Z^k \to E^\times: (z_1, \ldots, z_k) \mapsto \lambda_1^{z_1} \cdot \ldots \cdot \lambda_k^{z_n}
	\end{align}
	where $E$ is the splitting field of $f$ over $\Q$. By consequence the rank of $f$ can be expressed as $k - \rank(\ker \phi_f)$ and thus lies between $0$ and $k$. 
	
	Since we assumed the constant term of $f$ to be $\pm1$, the product of all its roots is equal to $\pm 1$. This implies that $\Z (2, \ldots, 2) \in \ker \phi_f$ and thus that the rank of an Anosov polynomial $f$ is at most $k - 1$. When there is equality, we say that $f$ satisfies the \textit{full rank condition} or that its roots have \textit{full rank}. Note that every polynomial that satisfies the full rank condition must be irreducible. This notion was first introduced in \cite{payn09-1}. Combining \cite[Proposition 3.6.(2)]{payn09-1} with Lemma \ref{lem:IrrPrimeDegreeCyclicPerm} we obtain the following result.
	
	\begin{prop}		\label{prop:primeDegFullRank}
		Let $f$ be an irreducible Anosov polynomial of prime degree, then the roots of $f$ satisfy the full rank condition.
	\end{prop}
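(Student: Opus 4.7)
The plan is to exploit the cyclic symmetry of the roots supplied by Lemma \ref{lem:IrrPrimeDegreeCyclicPerm}. Let $p$ denote the prime degree of $f$ and let $\lambda_1, \ldots, \lambda_p$ be its roots. By that lemma we may order the roots so that some $\sigma \in \gal(\Q(f), \Q)$ acts as the $p$-cycle $(1\,2\,\ldots\,p)$. The key observation is that the kernel $K = \ker \phi_f \subset \Z^p$ is invariant under cyclic shift on $\Z^p$: if $\lambda_1^{z_1}\cdots\lambda_p^{z_p} = 1$, then applying $\sigma$ and reindexing gives $\lambda_1^{z_p}\lambda_2^{z_1}\cdots\lambda_p^{z_{p-1}} = 1$, so the shifted tuple $(z_p, z_1, \ldots, z_{p-1})$ also lies in $K$. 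Thus $K$ is a $\Z[C_p]$-submodule of $\Z^p$, where $C_p = \langle \sigma \rangle$ acts by cyclic permutation of coordinates.

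Next I would pass to rational coefficients and apply a small dose of representation theory. Since $p$ is prime, $x^p - 1 = (x-1)\Phi_p(x)$ with $\Phi_p$ irreducible over $\Q$, so $\Q^p$ splits as a $\Q[C_p]$-module into exactly two irreducible summands: the diagonal line $\Q \cdot (1,\ldots,1)$ and its complement $V$ of dimension $p-1$ on which $C_p$ acts via $\Q(\zeta_p)$. Consequently the only $\Q[C_p]$-submodules of $\Q^p$ are $0$, $\Q\cdot(1,\ldots,1)$, $V$, and $\Q^p$. Because the constant term of $f$ is $\pm 1$, the diagonal vector $(2,\ldots,2)$ lies in $K$, hence $K_\Q := K \otimes_\Z \Q$ contains the diagonal line. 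This leaves only two possibilities for $K_\Q$: either $\Q \cdot (1,\ldots,1)$ or the full space $\Q^p$.

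To conclude, I would rule out $K_\Q = \Q^p$ by using hyperbolicity. If this case held, then for every standard basis vector $e_i$ some non-zero integer multiple $N e_i$ would lie in $K$, forcing $\lambda_i^N = 1$. But the Anosov hypothesis guarantees $|\lambda_i| \neq 1$ for every $i$, so no such $N$ can exist. Therefore $K_\Q = \Q \cdot (1,\ldots,1)$, so $K$ has $\Z$-rank exactly $1$, and the rank of $f$ equals $p - \rank K = p - 1$, which is the full rank condition.

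I do not expect a serious obstacle. The only subtle point is recognising the correct algebraic setup: once one views $K$ as a $\Q[C_p]$-submodule, the primality of $p$ forces the very short submodule lattice above, and hyperbolicity of the roots then eliminates the larger candidate at once.
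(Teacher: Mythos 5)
Your proof is correct. Note, however, that the paper does not write out a proof at all: it simply states that the proposition follows by combining Lemma \ref{lem:IrrPrimeDegreeCyclicPerm} with a result of Payne (\cite[Proposition 3.6.(2)]{payn09-1}), which is the statement that an irreducible Anosov polynomial whose Galois group contains a full-length cycle has roots of full rank. So your argument is a self-contained replacement for that citation. The core input is the same — the existence of a $p$-cycle $\sigma$ in the Galois group, guaranteed by Lemma \ref{lem:IrrPrimeDegreeCyclicPerm} — but the way you exploit it is cleaner and genuinely yours: recognizing $\ker\phi_f$ as a $\Z[C_p]$-submodule of $\Z^p$, rationalizing, and then using the irreducibility of the cyclotomic polynomial $\Phi_p$ over $\Q$ (which is exactly where primality of $p$ enters) to force the submodule lattice of $\Q^p$ down to four elements. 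The diagonal is forced inside by $\det = \pm 1$, and the full space is excluded by hyperbolicity since $\lambda_i^N = 1$ would contradict $|\lambda_i|\neq 1$. This is tighter and more transparent than re-deriving Payne's combinatorial argument, and it makes the role of primality visible: for composite degree $x^p-1$ factors further and the lattice of submodules becomes richer, which is exactly why the full rank condition can fail there.

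One tiny bookkeeping remark: the paper's map $\phi_f$ in equation (\ref{eq:mapPhi}) has a typo ($\lambda_k^{z_n}$ instead of $\lambda_k^{z_k}$), but your reading of it as $(z_1,\ldots,z_k)\mapsto \prod_i \lambda_i^{z_i}$ is the intended one, so no issue arises.
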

	
	\noindent Note that there is also a lower bound on the rank of $f$, namely it has to be at least 1. If the rank of $f$ would be $0$, then the group generated by the roots would be finite which implies that each $\lambda_i$ is a root of unity. This contradicts the fact that their absolute value has to be different from 1.
	
	We can naturally extend the notion of rank to Anosov automorphisms by using their characteristic polynomial.
	\begin{definition}
	    Let $A$ be an Anosov automorphism on a rational Lie algebra $\n^\Q$. We define the \textit{rank} of $A$ to be the rank of the multiplicative group generated by its eigenvalues.
	\end{definition}

	Let $\n^\Q$ be a nilpotent Anosov Lie algebra with Anosov automorphism $A$. We denote by $f$ the characteristic polynomial of $A$ and with $E$ the splitting field of $f$ over $\Q$. Since $\Aut(\n^\Q)$ is a linear algebraic group, it contains the semi-simple and nilpotent part of its elements, and thus we can assume $A$ to be semi-simple, see also \cite[Proposition 2.2.]{lw08-1}. This gives a decomposition
	\begin{align*}
	    \n^\Q = \n_1 \oplus \ldots \oplus \n_c
	\end{align*}
	of $\n^\Q$ as a vector space such that $A (\n_i) = \n_i$ for all $1 \leq i \leq c$ and the $i$-th ideal of the lower central series of $\n^\Q$, denoted by $\gamma_i(\n^\Q)$, is equal to $\gamma_i({\n^\Q}) = \n_i \oplus \ldots \oplus \n_c$. Here $c$ denotes the nilpotency class of $\n^\Q$. We write $n_i$ for the dimension of $\n_i$ and define the \textit{type} of $\n^\Q$ to be the $c$-tuple $(n_1, \ldots, n_c)$. 
	
	Let us denote by $A_i:\n_i \to \n_i$ the restriction of $A$ to $\n_i$. It is clear from Lemma \ref{lem:blockDiagHypIntLike} that each $A_i$ is also hyperbolic and integer-like. Let $f_i$ denote the characteristic polynomial of $A_i$ for all $1 \leq i \leq c$. Since $A$ is semi-simple, $\n^E$ has a basis consisting of eigenvectors of $A$ which respect the decomposition $\n^E = \n_1^E \oplus \ldots \oplus \n_c^E$. Let $X_1, \ldots, X_{n_1}$ be a basis of eigenvectors for $A_1$. Since the $i$-fold brackets of $X_1, \ldots, X_{n_1}$ span the subspace $\n_i^E$, it follows from the fact that $A$ is an automorphism that the eigenvalues of $A_i$ are $i$-fold products of the eigenvalues of $A_1$. Therefore the rank of $A$ is actually equal to the rank of $f_1$. For the Anosov automorphism $A$ we will write $\phi_A := \phi_{f_1}$ as defined by equation (\ref{eq:mapPhi}). The Anosov automorphism $A$ is said to be of \textit{full rank} if $f_1$ satisfies the full rank condition.

	\begin{lemma}
	The rank of $A$ is equal to the rank of $A^k$ for every $k > 0$.
	\end{lemma}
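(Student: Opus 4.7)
The plan is to reduce this to the structure theorem for finitely generated abelian groups. The starting observation is that if $\lambda_1, \ldots, \lambda_n$ are the eigenvalues of $A$ (with multiplicity), then the eigenvalues of $A^k$ are precisely $\lambda_1^k, \ldots, \lambda_n^k$. Hence, setting $G = \langle \lambda_1, \ldots, \lambda_n \rangle \leq E^\times$ and $H = \langle \lambda_1^k, \ldots, \lambda_n^k \rangle \leq E^\times$, the lemma amounts to showing $\rank(G) = \rank(H)$.

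Next I would note that because $G$ is abelian, a general element $\prod_i \lambda_i^{a_i}$ raised to the power $k$ gives $\prod_i (\lambda_i^k)^{a_i}$, so
\begin{equation*}
    H = \{ x^k \mid x \in G \},
\end{equation*}
that is, $H$ is the image of the $k$-th power endomorphism $\phi_k : G \to G$. Since $G$ is finitely generated (by the $n$ eigenvalues) and abelian, the structure theorem gives $G \cong \Z^r \oplus T$ for some finite torsion subgroup $T$, and $\rank(G) = r$ by definition.

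Under this isomorphism, $\phi_k$ corresponds to multiplication by $k$ on each factor, so its image is $k\Z^r \oplus kT \cong \Z^r \oplus kT$, which again has rank $r$. Thus $\rank(H) = r = \rank(G)$, which proves the lemma. Equivalently, one can observe that $G/H$ is a finitely generated abelian group of exponent dividing $k$, hence finite, and finite-index subgroups of finitely generated abelian groups have the same rank.

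There is no real obstacle here; the only point requiring a moment of care is justifying that $H$ coincides with the full image $G^k$ of the power map (rather than being potentially smaller), which uses only that $G$ is abelian. One might also briefly remark that $A^k$ is again hyperbolic and integer-like, so the notion of rank applies to it in the first place, but this is immediate from the definitions.
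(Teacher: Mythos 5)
Your proof is correct. The only point worth double-checking is the identity $H = \{x^k \mid x\in G\}$, and you justify it correctly using that $G$ is abelian; from there, either the structure-theorem computation or the finite-index argument finishes cleanly.

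Your route differs from the paper's in where it works: you argue on the \emph{image} side, showing directly that the subgroup $H=G^k\leq G$ generated by the $k$-th powers has the same rank as $G$, via the structure theorem (or the finite-index observation). The paper instead works on the \emph{kernel} side: it writes $\phi_{A^k}=\phi_A\circ\theta_k$ with $\theta_k$ multiplication by $k$ on $\Z^{n_1}$, then uses that $\theta_k$ is injective to get $\rank(\ker\phi_{A^k})\le\rank(\ker\phi_A)$, and $\ker\phi_A\subset\ker\phi_{A^k}$ for the reverse, so the ranks of the kernels agree and hence so do the ranks $n_1-\rank(\ker\phi_\bullet)$. The two are dual manifestations of the same fact (multiplication by $k$ does not change rank), but the paper's version stays inside $\Z^{n_1}$ and is better aligned with how $\ker\phi_A$ and the integer $d_A$ are used immediately afterwards, whereas yours is arguably the more conceptual statement about the group of eigenvalues and requires only standard facts about finitely generated abelian groups. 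Both are acceptable; one minor stylistic note is that the paper computes rank from the eigenvalues of $A_1$ (i.e.\ $\phi_A := \phi_{f_1}$ on $\Z^{n_1}$), while you use all eigenvalues of $A$; the paper has already observed these generate the same multiplicative group, so this makes no difference.
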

	\begin{proof}
		Consider the two morphisms $\phi_A: \Z^{n_1} \to E$ and $\phi_{A^k}: \Z^{n_1} \to E'$ as defined above where $E$ and $E'$ are the splitting fields of the characteristic polynomials of $A$ and $A^k$, respectively. It is clear that $E' \subset E$ since the eigenvalues of $A^k$ are $k$-powers of the eigenvalues of $A$. Let $\theta_k$ be the morphism from $\Z^{n_1}$ to itself given by multiplication by $k$, then we have $\phi_{A^k} = \phi_A \circ \theta_k$. Since $\theta_k$ is an injective morphism, it preserves $\Z$-linear independence. Therefore it follows that $\rank (\ker \phi_{A^k}) \leq \rank (\ker \phi_A)$. On the other hand we also have that $\ker \phi_A \subset \ker \phi_{A^k}$ which implies that $\rank (\ker \phi_A) = \rank (\ker \phi_{A^k})$. We conclude that the rank of an Anosov automorphism is invariant under taking non-zero powers.
	\end{proof}
	\noindent During the remainder of this paper, we will often take powers of Anosov automorphisms to achieve stronger assumptions. For example, we will from now on always assume that the constant term of $f_1$ is equal to $1$, by squaring $A$ if necessary.
	
	Consider the following morphism between abelian groups
    	\begin{align*}
	    \psi: \Z^{n_1} \to \Z: (z_1, \ldots, z_{n_1}) \mapsto \sum_{i = 1}^{n_1} z_i.
	\end{align*}
	For any Anosov automorphism $A$, the image of $\ker \phi_A$ under this map is a subgroup of $\Z$. Therefore there exists a unique positive integer $d_A$ such that $\psi (\ker \phi_A) = d_A \cdot \Z$. In the special case where $A$ has full rank, $d_A$ equals $n_1$. The following theorem gives a restriction on the Lie algebra structure using this integer $d_A$.
	
	\begin{prop}
		\label{prop:CnotTooHighThenGraded}
		Let $A$ be an Anosov automorphism on the rational nilpotent Lie algebra $\n^\Q$ with corresponding decomposition $\n^\Q = \n_1 \oplus \ldots \oplus \n_c$. Then for all $1 \leq i,j \leq c$, we have that
		\begin{align*}
			[\n_i, \n_j] \subset \bigoplus_{k \in \N} \n_{i + j + k\cdot d_A}
		\end{align*}
		where we set $\n_i = 0$ if $i > c$. By consequence we have that if $c \leq d_A + 1$, the Lie algebra $\n^\Q$ is positively graded.
	\end{prop}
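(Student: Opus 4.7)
The plan is to pass to the splitting field $E$ of $f$, exploit that $A$ may be assumed semi-simple (so $\n^E$ admits a basis of eigenvectors refining the decomposition $\n^E = \n_1^E \oplus \ldots \oplus \n_c^E$), and track the eigenvalues through brackets to read off congruence restrictions modulo $d_A$.

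First I would record the key observation already noted in the text: if $X_1, \ldots, X_{n_1}$ is a basis of eigenvectors for $A_1$ with eigenvalues $\lambda_1, \ldots, \lambda_{n_1}$, then since $\n_i^E$ is generated modulo $\gamma_{i+1}(\n^E)$ by $i$-fold brackets of the $X_s$, every eigenvalue of $A_i$ has the form $\lambda_1^{a_1}\cdots \lambda_{n_1}^{a_{n_1}}$ with $a_s \geq 0$ and $\sum_s a_s = i$. Now pick eigenvectors $v \in \n_i^E$ with eigenvalue $\mu = \lambda_1^{a_1}\cdots \lambda_{n_1}^{a_{n_1}}$ ($\sum a_s = i$) and $w \in \n_j^E$ with eigenvalue $\nu = \lambda_1^{b_1}\cdots \lambda_{n_1}^{b_{n_1}}$ ($\sum b_s = j$). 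The bracket $[v,w]$ is an eigenvector for $A$ with eigenvalue $\mu\nu$, and since $[\n_i, \n_j] \subset \gamma_{i+j}(\n^\Q) = \n_{i+j} \oplus \ldots \oplus \n_c$, we may decompose $[v,w] = \sum_{k \geq i+j} u_k$ with $u_k \in \n_k^E$; each $A$-invariant component $u_k$ is itself an eigenvector with eigenvalue $\mu\nu$.

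The main step is then the arithmetic extraction. For each $k$ with $u_k \neq 0$, the eigenvalue $\mu\nu$ must occur for $A_k$, so $\mu\nu = \lambda_1^{c_1}\cdots \lambda_{n_1}^{c_{n_1}}$ with $c_s \geq 0$ and $\sum_s c_s = k$. Therefore $(a_1 + b_1 - c_1, \ldots, a_{n_1} + b_{n_1} - c_{n_1}) \in \ker \phi_A$, and applying $\psi$ gives $i + j - k \in d_A \Z$, i.e.\ $k = i + j + m \cdot d_A$ with $m \in \Z$. Because $k \geq i+j$, one has $m \geq 0$. Since $\n_i^E$ is spanned by such eigenvectors, this proves $[\n_i^E, \n_j^E] \subset \bigoplus_{m \geq 0} \n_{i+j+m\cdot d_A}^E$, and intersecting with $\n^\Q$ gives the claimed inclusion (with the convention $\n_k = 0$ for $k > c$).

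For the corollary, assume $c \leq d_A + 1$. If $i, j \geq 1$ then $i+j+m\cdot d_A \leq c \leq d_A + 1$ forces $m \cdot d_A \leq d_A - 1$, hence $m = 0$. Thus $[\n_i, \n_j] \subset \n_{i+j}$, which is exactly the condition that $\n^\Q = \bigoplus_i \n_i$ is a positive grading. The only subtle point I would double-check is the passage between $\Q$ and $E$, namely that the rational $A$-invariant decomposition $\n^\Q = \bigoplus \n_i$ extends scalars to an eigenspace-compatible decomposition of $\n^E$; this is immediate because $A$ is semi-simple over $\Q$ (being an element of the linear algebraic group $\Aut(\n^\Q)$) and each $\n_i$ is $A$-invariant, so $\n_i^E$ is a sum of $A$-eigenspaces.
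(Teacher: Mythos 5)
Your proof is correct and follows essentially the same route as the paper: pass to the splitting field, use semi-simplicity to get a compatible eigenbasis, observe that eigenvalues of $A_k$ are products $\lambda_1^{c_1}\cdots\lambda_{n_1}^{c_{n_1}}$ with $\sum c_s = k$, and extract the congruence $k \equiv i+j \pmod{d_A}$ by applying $\psi$ to a kernel element of $\phi_A$. The only cosmetic difference is that you explicitly decompose $[v,w]$ into its $\n_k^E$-components before reading off the constraint, whereas the paper introduces the index set $\mathcal{I}$ of levels where the eigenvalue appears; both encode the same fact.
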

	
	\begin{proof}
		Let $A:\n^\Q \to \n^\Q$ be the Anosov automorphism for which $A (\n_i) = \n_i$. We denote the splitting field of the characteristic polynomial of $A_1$ by $E$ and the roots by $\lambda_1, \ldots, \lambda_{n_1}$. Then $A$ has a basis of eigenvectors in $\n^E = E \otimes \n^\Q$ and since the subspaces $\n_i^E := E \otimes \n_i$ are invariant under $A$, it follows that we can choose a basis of eigenvectors that respects the direct sum $\n^E = \n_1^E \oplus \ldots \oplus \n_c^E$. 
		
		Now take any two of these eigenvectors $X, Y$ with $X \in \n_k^E$ and $Y \in \n_l^E$. If their bracket $[X,Y]$ is non-zero, it is again an eigenvector of $A$. Define $\mathcal{I}$ as the set of all integers $m$ with $1 \leq m \leq c$ such that the eigenvalue of $[X, Y]$ is an eigenvalue of $A_m$. Now take an $m \in \mathcal{I}$. Since the eigenvalues of $A_i$ are $i$-fold products of the $\lambda_j$'s, there exist non-negative integers $e_1, \ldots, e_{n_1}, f_1, \ldots, f_{n_1}, g_{1}, \ldots, g_{n_1}$ with $\displaystyle \sum_{i = 1}^{n_1} e_i = k, \sum_{i = 1}^{n_1} f_i = l$ and $\displaystyle \sum_{i = 1}^{n_1} g_i = m$ such that the eigenvalues of $X, Y$ and $[X,Y]$ are given by $\displaystyle \prod_{i = 1}^{n_1} \lambda_i^{e_i}, \prod_{i = 1}^{n_1} \lambda_i^{f_i}$ and $\displaystyle \prod_{i = 1}^{n_1} \lambda_i^{g_i}$, respectively. Since $A$ is an automorphism of $\n^\Q$, we must have that the product of the eigenvalues of $X$ and $Y$ is equal to the eigenvalue of $[X,Y]$. This implies that
		\begin{align*}
			&\left(\prod_{i = 1}^{n_1} \lambda_i^{e_i} \right) \cdot \left(  \prod_{i = 1}^{n_1} \lambda_i^{f_i}\right) =  \prod_{i = 1}^{n_1} \lambda_i^{g_i}
			\quad \quad \Rightarrow \quad \quad \prod_{i = 1}^{n_1} \lambda_i^{e_i + f_i - g_i} = 1.
		\end{align*}
	By consequence $(e_1 + f_1 - g_1, \ldots, e_{n_1} + f_{n_1} - g_{n_1}) \in \ker \phi_A$ and so we know that its image under $\psi$ is a multiple of $d_A$. This gives that $m = k + l + r\cdot d_A$ for some $r \in \Z$. Since $m$ was chosen arbitrarily in $\mathcal{I}$, we know that $\mathcal{I} \subset \{ k + l + r\cdot d_A \mid r \in \Z \}$. Therefore we must have that $[X, Y]$ lies in the direct sum $\displaystyle \bigoplus_{r \in \Z} \n^E_{k + l + r\cdot d_A}$, where we set $\n_i^E = 0$ for $i < 1$ or $i > c$. As the eigenvectors $X \in \n^E_k$ and $Y \in \n_l^E$ were arbitrary, it follows that $\displaystyle [\n_k^E, \n_l^E] \subset \bigoplus_{r \in \Z} \n_{k + l + r\cdot d_A}^E$. From this we get that the same holds for the rational spaces $\n_i$ and because $[\n_k , \n_l] \subset \gamma_{k+l}(\n^\Q)$, the direct sum only needs to run over non-negative integers $r$. This gives 
		\begin{equation} \label{eq:subsetOfDirectSum} [\n_k, \n_l] \subset \bigoplus_{r \in \N} \n_{k + l + r\cdot d_A}.
		\end{equation}
		Now, if we assume in addition that $c \leq d_A + 1$, then we have for any $1 \leq k, l \leq c$ that $\n_{k + l + r\cdot d_A} = 0$ for $r > 0$. Therefore Equation (\ref{eq:subsetOfDirectSum}) becomes $[\n_k, \n_l] \subset \n_{k+l}$ which shows that in this case $\n^\Q = \n_1 \oplus \ldots \oplus \n_c$ is a positive grading for $\n^\Q$.
	\end{proof}
	
	%Let $f$ be an irreducible Anosov polynomial and denote by $E$ the splitting field of $f$ over $\Q$. We write $\lambda_1, \ldots, \lambda_n$ for the roots of $f$. Since $\Q$ has characteristic 0, it follows that these roots are all distinct. An element $\sigma \in \gal(E, \Q)$ then induces a permutation on the set of roots of $f$. The field automorphism $\sigma$ is also uniquely determined by this permutation. We therefore have an injection
	%\[ \iota:\gal(E, \Q) \to S_n: \sigma \mapsto \iota(\sigma) \]
	%with
	%\[ \iota(\sigma)(i) = j \Leftrightarrow \sigma(\lambda_i) = \lambda_j. \]
	%From now on we will drop the `$\iota$' and just write $\sigma$ for the permutation in $S_n$ as well, as there is no confusion possible. 
	
	Using tools of Galois theory, we find more information about the rank of an Anosov automorphism. After ordering the roots of an irreducible polynomial of degree $n$, the Galois group of its splitting field $E$ can be seen as a subgroup of $S_n$ as explained in Section \ref{sec:prelimGalois}. For any finite set $X$ we write $S_X$ for the permutation group on $X$. It is clear that if $X \subset \{ 1, \ldots, n \}$ there is a natural inclusion of $S_X$ in $S_n$ by extending a permutation of $X$ by the identity on the complement. The following lemma from \cite[Lemma 3.7.]{payn09-1} gives more information about the subgroup of $\gal(E, \Q)$ that fixes a certain element of $E$. We recall the proof for completeness.
	
	\begin{lemma}
		\label{lem:FullRankPermutationNotTransitive}
		Let $f$ be an irreducible Anosov polynomial which satisfies the full rank condition. Let $E$ denote the splitting field of $f$ and $\lambda_1, \ldots, \lambda_n$ its roots. Consider an element $\mu = \lambda_1^{e_1} \cdot\ldots\cdot \lambda_n^{e_n}$ with $e_i \in \N$ and a corresponding equivalence relation $\sim$ on $I := \{1, \ldots, n\}$ defined by $i \sim j \Leftrightarrow e_i = e_j$. Then we have
		\begin{align*}
			\Aut(E, \Q(\mu)) \subseteq \bigoplus_{[i] \in I/\sim} S_{[i]}.
		\end{align*}
		By consequence if $\mu \neq \pm 1$, $\Aut(E, \Q(\mu))$ does not act transitively on the roots $\lambda_1, \ldots, \lambda_n$.
	\end{lemma}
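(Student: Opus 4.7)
The plan is to translate the fixed-point condition $\sigma(\mu) = \mu$ into a relation living in $\ker \phi_f$, and then exploit the full rank hypothesis to show that this kernel is essentially diagonal, forcing $\sigma$ to preserve the equivalence classes of $\sim$.

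First I would pick any $\sigma \in \Aut(E, \Q(\mu))$. Viewing $\sigma$ as a permutation of the roots, the substitution $j = \sigma(i)$ gives
\begin{align*}
\sigma(\mu) = \prod_{i=1}^n \sigma(\lambda_i)^{e_i} = \prod_{i=1}^n \lambda_{\sigma(i)}^{e_i} = \prod_{j=1}^n \lambda_j^{e_{\sigma^{-1}(j)}}.
\end{align*}
The identity $\sigma(\mu) = \mu$ then yields $\prod_{j=1}^n \lambda_j^{e_{\sigma^{-1}(j)} - e_j} = 1$, so the vector $v := (e_{\sigma^{-1}(1)} - e_1, \ldots, e_{\sigma^{-1}(n)} - e_n)$ lies in $\ker \phi_f$.

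Next I would pin down $\ker \phi_f$. Since the constant term of $f$ is $\pm 1$, the product of the roots equals $\pm 1$, so $(2, 2, \ldots, 2) \in \ker \phi_f$. The full rank hypothesis gives $\rank(\ker \phi_f) = 1$, hence $\ker \phi_f = \Z w$ for some generator $w$, and the relation $(2,\ldots,2) = kw$ for a suitable integer $k$ forces all entries of $w$ to coincide. Therefore $\ker \phi_f \subseteq \Z \cdot (1,1,\ldots,1)$, so every element of the kernel has all coordinates equal. Applied to $v$, the differences $e_{\sigma^{-1}(j)} - e_j$ share a common value $c$; summing over $j$ gives $nc = 0$ and so $c = 0$. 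Thus $e_{\sigma^{-1}(j)} = e_j$ for every $j$, which is precisely to say that $\sigma^{-1}$ (and hence $\sigma$) preserves each equivalence class $[i] \in I/\sim$, proving $\sigma \in \bigoplus_{[i]} S_{[i]}$.

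For the consequence, observe that if $I/\sim$ consisted of a single class then all $e_i$ would equal a common value $e$, giving $\mu = (\prod_i \lambda_i)^e \in \{\pm 1\}$. So the hypothesis $\mu \neq \pm 1$ guarantees at least two equivalence classes, and the subgroup $\bigoplus_{[i]} S_{[i]}$, which stabilizes each class setwise, cannot act transitively on $I = \{1, \ldots, n\}$. The main point is really the diagonality of $\ker \phi_f$ under the full rank hypothesis; once that is in hand, everything else is a short computation and a reindexing.
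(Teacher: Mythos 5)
Your proposal is correct and follows essentially the same route as the paper's proof: translate $\sigma(\mu)=\mu$ into a relation in $\ker\phi_f$, use the full rank hypothesis to see that every kernel vector has all coordinates equal, deduce from a summation that the common difference is zero, and conclude that $\sigma$ preserves the equivalence classes. The only cosmetic difference is that the paper telescopes the constant differences around each disjoint cycle of $\sigma$ while you sum over all indices at once, and you spell out the $\ker\phi_f=\Z w$ argument that the paper leaves implicit; the underlying idea is identical.
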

	\begin{proof}
		Take any $\sigma \in \Aut(E, \Q(\mu))$, so $\sigma$ fixes $\mu$ and we have
		\begin{align*}
			&\sigma(\lambda_1^{e_1} \cdot \ldots \cdot \lambda_n^{e_n}) = \lambda_1^{e_1} \cdot \ldots \cdot \lambda_n^{e_n}\\
			\Rightarrow \quad & \lambda_{\sigma(1)}^{e_1} \cdot \ldots \cdot \lambda_{\sigma(n)}^{e_n} = \lambda_1^{e_1} \cdot \ldots \cdot \lambda_n^{e_n}\\
			\Rightarrow \quad & \lambda_1^{e_{\sigma^{-1}(1)} - e_1} \cdot \ldots \cdot \lambda_n^{e_{\sigma^{-1}(n)} - e_n} = 1.
		\end{align*}
		Since the $\lambda_i$ have full rank, this implies that
		\begin{equation}
			\label{eq:exponentsFullRank}
			e_{\sigma^{-1}(1)} - e_1 = \ldots = e_{\sigma^{-1}(n)} - e_n.
		\end{equation}
		Then let $(i_1 \cdots i_k)$ be any of the disjoint cycles of $\sigma$ seen as an element of $S_n$. Then it follows from equation (\ref{eq:exponentsFullRank}) that $e_{i_1} = \ldots = e_{i_k}$. So $i_1, \ldots, i_k \in [i_1]$ and by consequence $(i_1 \cdots i_k) \in S_{[i_1]}$. We thus see that $\sigma$ is a composition of elements which each lie in some $S_{[i]}$ and thus this proves the claim.
		
		For the last statement, we know that if $\Aut(E,\Q(\mu))$ acts transitively, it must hold that $i \sim j$ for all $i$ and $j$ and thus that $e_i = e_j$ for all $i$ and $j$. In particular, we have that $\mu = \left( \lambda_1 \cdot \ldots \cdot \lambda_n\right)^{e_1} = (\pm1)^{e_1} = \pm1$.
	\end{proof}
	
	We can now use this lemma to prove the following proposition \cite[Corollary 3.9.]{payn09-1}, which gives us more information about the type of an Anosov Lie algebra.
		
	\begin{prop}
	    \label{prop:n1PrimeFullRankThenDividesni}
	    Let $\n^\Q$ be a nilpotent Lie algebra with $n_1$ prime and $A: \n^\Q \to \n^\Q$ an Anosov automorphism of full rank. Then $n_1$ divides $n_i$ for all $2 \leq i \leq c$.
	\end{prop}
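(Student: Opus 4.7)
The strategy is to use the cyclic Galois element produced by Lemma \ref{lem:IrrPrimeDegreeCyclicPerm} to partition the eigenvalues of $A_i$ into orbits of size $n_1$, from which $n_1 \mid n_i$ will follow immediately. Throughout I will work with the characteristic polynomial $f_1$ of $A_1$. Since $A$ has full rank, $f_1$ is irreducible of prime degree $n_1$, with splitting field $E$, and its roots $\lambda_1,\ldots,\lambda_{n_1}$ have full rank. After passing to a non-zero power of $A$ (which preserves both the rank and all the $n_i$) I may assume the constant term of $f_1$ equals $+1$, so $\lambda_1\cdots\lambda_{n_1}=1$.

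The first step is to apply Lemma \ref{lem:IrrPrimeDegreeCyclicPerm} to obtain a Galois automorphism $\sigma\in\gal(E,\Q)$ of order $n_1$ which, after reordering the roots, acts as the cycle $(1\,2\,\ldots\,n_1)$. The second step is to consider the multiset of eigenvalues of $A_i$ (counted with algebraic multiplicity). Because $\n_i$ is defined over $\Q$, the map $A_i$ has rational characteristic polynomial, so this multiset is invariant under the action of $\gal(E,\Q)$, and in particular under $\langle\sigma\rangle$. Since $\langle\sigma\rangle$ has prime order $n_1$, every $\langle\sigma\rangle$-orbit in this multiset has size $1$ or $n_1$.

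The third and key step is to rule out orbits of size $1$. Any eigenvalue $\mu$ of $A_i$ has the form $\mu=\lambda_1^{e_1}\cdots\lambda_{n_1}^{e_{n_1}}$ with $e_j\in\N$ and $\sum e_j = i$. If $\sigma(\mu)=\mu$, then $\sigma\in\Aut(E,\Q(\mu))$, and Lemma \ref{lem:FullRankPermutationNotTransitive} forces $\sigma$ to lie in $\bigoplus_{[j]} S_{[j]}$, where $[j]$ is the equivalence class of indices sharing the same exponent. Since $\sigma$ is a single $n_1$-cycle, the unique cycle must fit inside a single class, so all the $e_j$ must be equal; say $e_j=e$ for all $j$. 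Then $\mu=(\lambda_1\cdots\lambda_{n_1})^e = 1$, contradicting that $A_i$ is hyperbolic.

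Having excluded fixed eigenvalues, every $\langle\sigma\rangle$-orbit in the eigenvalue multiset of $A_i$ has size exactly $n_1$, so $n_1$ divides the total size of the multiset, which is $n_i$. The only subtle point, and the one I would double-check carefully, is that one is genuinely allowed to replace $A$ by a power (to arrange the constant term $+1$) without changing any of $n_1$, $n_i$, the type of $\n^\Q$, or the full-rank hypothesis; all of this was recorded earlier in the section, so the argument above is complete. The main obstacle is essentially bookkeeping: making sure that the Galois action on eigenvalues with multiplicities is well-defined and that Lemma \ref{lem:FullRankPermutationNotTransitive} is applied to the correct element $\mu$ at the correct graded piece.
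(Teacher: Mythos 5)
Your proof is correct and follows essentially the same route as the paper: both obtain the $n_1$-cycle $\sigma$ from Lemma \ref{lem:IrrPrimeDegreeCyclicPerm}, rule out $\sigma$-fixed eigenvalues of $A_i$ via Lemma \ref{lem:FullRankPermutationNotTransitive} together with hyperbolicity, and conclude that $n_1$ divides $n_i$ by an orbit count. The only difference is organizational — the paper first splits $f_i$ into irreducible factors and counts $\langle\sigma\rangle$-orbits inside each, while you count orbits in the full eigenvalue multiset directly — and these are equivalent.
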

	\begin{proof}
	    %Write $f_i$ for the characteristic polynomial of $A_i$ and denote by $\lambda_1, \ldots, \lambda_{n_1}$ the roots of $f_1$ which are of full rank. This implies that $f$ is irreducible as well. Therefore $\gal(E, \Q)$ acts transitively on the roots of $f_1$. By the orbit stabilizer theorem we must conclude that $n_1$ divides the order of $\gal(E, \Q)$. Since $\n_1$ is prime, this gives us an element $\sigma \in \gal(E, \Q)$ which has order $n_1$. It follows that, up to reordering the roots $\lambda_1, \ldots, \lambda_{n_1}$, the automorphism $\sigma$ corresponds to the cyclic permutation $(1\, 2\, \ldots \, n_1)$ on the roots of $f_1$. \textcolor{red}{Dit volgt nu uit de preliminaries?}
	    Write $f_i$ for the characteristic polynomial of $A_i$. Since $f_1$ satisfies the full rank condition, it is irreducible as well. By Lemma \ref{lem:IrrPrimeDegreeCyclicPerm} there exists an ordering of of the roots of $f$, say $\lambda_1, \lambda_2, \ldots, \lambda_{n_1}$ and an element $\sigma$ of order $n_1$ which corresponds to the permutation $(1 \, 2 \, \ldots \, n_1)$ on these roots. Each $f_i$ can be written as a product of its irreducible factors $f_i = g_{i1} \cdot g_{i2} \cdot \ldots \cdot g_{ik}$. Let $M_{ij}$ denote the set of roots of the polynomial $g_{ij}$. It is clear that $\gal(E, \Q)$ has an action on this set and that $|M_{ij}| = \deg g_{ij}$. Now take an element $\mu \in g_{ij}$. By Lemma \ref{lem:FullRankPermutationNotTransitive} we know that $\sigma$ can not be an element of $\Aut(E, \Q(\mu))$, meaning that $\sigma(\mu) \neq \mu$. The orbit of $\mu$ under the subgroup of $\gal(E, \Q)$ generated by $\sigma$, written $\langle \sigma \rangle$ must therefore count at least two elements. Since $\sigma$ has prime order, the orbit stabilizer theorem then tells us that this orbit counts exactly $n_1$ elements. This proves that all the orbits under $\langle \sigma \rangle$ in $M_{ij}$ count $n_1$ elements. Since these orbits form a partition of $M_{ij}$, this proves that $n_1 \mid M_{ij} = \deg g_{ij}$. By consequence $n_1 \mid \deg f_i = n_i$ for all $2 \leq i \leq c$.
	\end{proof}

    %\textcolor{blue}{Ik denk niet dat ik op dit moment nog echt ergens onderstaand lemma gebruik.}
	%\begin{lemma}
	%	\label{lem:fullrankDivisorsn1}
	%	Let $\n^\Q = \n_1 \oplus \ldots \oplus \n_c$ be a nilpotent Anosov Lie algebra with $f$ the characteristic polynomial of $A_1$. If the roots of $f$ are of full rank, we have that $n_1 \mid n_i \cdot i$ for any $2 \leq i \leq c$.
	%\end{lemma}
	%\begin{proof}
		%Fix some $i$ with $2 \leq i \leq c$. Each eigenvalue of $\n_i$ can be written as an $i$-fold product of the roots of $f$. By consequence the product of all the eigenvalues on the $i$-th layer is an $(n_i \cdot i)$-fold product of the roots of $f$. We also know that this product equals one. Since the roots of $f$ satisfy the full rank condition this implies that $n_i \cdot i$ is a multiple of $n_1$. This proves the statement.
		%\textcolor{red}{Kunnen we dit niet opschrijven met de afbeelding $\psi$ van hierboven?}
	%\end{proof}

	The final lemma shows that we can assume that the irreducible components of Anosov polynomials of rank $1$ have degree $2$ by taking a finite  power.

	\begin{lemma}
	    \label{lem:AnosovPolyRank1}
		Let $A$ be an Anosov automorphism of rank one. Up to taking a power of $A$ its characteristic polynomial is a product of Anosov polynomials of degree 2.
	\end{lemma}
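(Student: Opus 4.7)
The plan is to exploit that \emph{rank one} means the eigenvalues generate a cyclic group modulo torsion, so they can all be expressed as powers of a single element times a root of unity. A suitable power of $A$ clears the torsion, and then the Galois action on an infinite cyclic group is forced to be $\pm 1$, which pairs eigenvalues with their inverses.

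First I would set up the multiplicative group $\Lambda = \langle \lambda_1, \ldots, \lambda_k \rangle \subset E^\times$, where $E$ is the splitting field of the characteristic polynomial of $A$ and $\lambda_1, \ldots, \lambda_k$ its eigenvalues (with multiplicity). By hypothesis $\Lambda$ has rank one, hence $\Lambda \cong \Z \oplus T$ with $T$ the finite group of roots of unity contained in $\Lambda$. Choose $\mu \in \Lambda$ whose class generates the free part; then each eigenvalue has the form $\lambda_i = \zeta_i \mu^{e_i}$ with $\zeta_i \in T$ and $e_i \in \Z \setminus \{0\}$, the nonvanishing being forced by hyperbolicity. Setting $m = |T|$, the eigenvalues of $A^m$ are the powers $\mu^{m e_i}$, all of which lie in the torsion-free subgroup $\langle \mu \rangle$.

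The key step is the Galois analysis. The Galois group $G := \gal(E, \Q)$ permutes the eigenvalues of $A^m$, so it preserves the multiplicative subgroup $\Lambda' := \langle \mu^{m e_1}, \ldots, \mu^{m e_k} \rangle$. Since $\Lambda'$ is a nontrivial subgroup of the infinite cyclic group $\langle \mu \rangle$, it is itself infinite cyclic, and any automorphism of an infinite cyclic group is $\pm \mathrm{id}$. Hence each $\sigma \in G$ sends $\mu^{m e_i}$ to $\mu^{\pm m e_i}$, so the Galois orbit of any eigenvalue $\alpha = \lambda_i^m$ is contained in $\{\alpha, \alpha^{-1}\}$. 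A singleton orbit $\{\alpha\}$ would put $\alpha$ in $\Q$, and since $\alpha$ is an algebraic unit this would force $\alpha = \pm 1$, contradicting hyperbolicity; so every orbit is exactly a pair $\{\alpha, \alpha^{-1}\}$ with $\alpha \neq \alpha^{-1}$.

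From here the factorization of the characteristic polynomial of $A^m$ falls out: grouping the eigenvalues by Galois orbit, and using that $\alpha$ and $\alpha^{-1}$ occur with equal multiplicity (being Galois-conjugate), the characteristic polynomial becomes a product of quadratic factors $x^2 - (\alpha + \alpha^{-1}) x + 1$, each raised to its common multiplicity. Each such factor is monic, has constant term $1$, has integer coefficients since $\alpha + \alpha^{-1}$ is a Galois-fixed algebraic integer hence lies in $\Z$, and has hyperbolic roots; so it is a degree-two Anosov polynomial. The only delicate point is ensuring that the subgroup on which Galois acts really is infinite cyclic rather than being obscured by roots of unity, which is exactly what motivates passing to the power $A^m$ with $m = |T|$ at the outset.
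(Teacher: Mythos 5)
Your proof is correct, and it takes a genuinely different route from the paper's. The paper argues irreducible factor by irreducible factor: for each irreducible factor $g$ of the characteristic polynomial, rank one gives nonzero integers $e_i$ with $\lambda_1^{e_1} = \cdots = \lambda_n^{e_n}$; transitivity of the Galois group on the roots of $g$ produces, for each $i$, a $\sigma_i$ with $\sigma_i(\lambda_1) = \lambda_i$, and iterating $\sigma_i$ a number of times equal to its order forces $e_1^{s_i} = e_i^{s_i}$, hence $e_i = \pm e_1$; taking the $e_1$-th power then collapses the roots of $g$ onto $\{\lambda_1, \lambda_1^{-1}\}$. You instead work globally: write the group $\Lambda$ generated by all eigenvalues as $\Z \oplus T$, take the $|T|$-th power to kill torsion so that every eigenvalue lands in a single infinite cyclic group $\langle \mu \rangle$, and then exploit that $\Aut(\Z) = \{\pm 1\}$ to conclude that each $\sigma$ acts on the eigenvalues uniformly as the identity or as inversion, giving Galois orbits of size exactly two. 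Your approach yields a single, conceptually transparent power (the torsion order) that works for all factors simultaneously, and the ``$\pm 1$-rigidity'' argument cleanly replaces the paper's order computation with $\sigma_i$. The paper's approach is more elementary in that it avoids the structure theorem and directly exhibits the exponent relations $e_i = \pm e_1$, which could be reused elsewhere, but the two proofs reach the same conclusion and neither has a gap.
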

	\begin{proof}
		Let $g$ be an irreducible factor of $f$. It follows that $g$ is Anosov of rank 1 as well. Let $\lambda_1, \ldots, \lambda_k$ be the roots of $g$. Since they generate an abelian group of rank 1, there exist non-zero integers $e_i$ such that
		\begin{align*}
			\lambda_1^{e_1} = \lambda_2^{e_2} = \ldots = \lambda_n^{e_n}.
		\end{align*}
		Since $g$ is irreducible, there exists for each index $2 \leq i \leq n$ a field automorphism $\sigma_i$ in the Galois group of $f$ over $\Q$ such that $\sigma_i(\lambda_1) = \lambda_i$. This implies that $\sigma_i(\lambda_1^{e_i}) = \lambda_i^{e_i} = \lambda_1^{e_1}$. Write $s_i$ for the order of $\sigma_i$, then it follows that
		\begin{equation*}
		\lambda_1^{\left(e_i^{s_i}\right)} = \sigma_i^{s_i}\left(\lambda_1^{\left(e_i^{s_i}\right)}\right) = \lambda_1^{\left(e_1^{s_i}\right)}.
		\end{equation*}
		Since $\lambda_1$ has absolute value different from $1$ we must have that $e_1^{s_i} = e_i^{s_i}$ and thus that $e_1 = \pm e_i$. Note that we can assume $e_1 = 1$ after taking the $e_1$-th power of $A$ if necessary. This can be done for every irreducible factor of $f$ since there are only finitely many of them. By consequence we get that the roots of $g$ all lie in the set $\{ \lambda_1, \lambda_1^{-1} \}$. This shows that the degree of $g$ is 2 since it is irreducible and thus must have distinct roots. We conclude that $f$ is a product of degree 2 Anosov polynomials.
	\end{proof}

\section{Positive gradings on Anosov Lie algebras}
\label{sec:dimension}
In this section, we use the notation and the results of the previous section to prove that every nilpotent Anosov Lie algebra $\n^\Q$ of dimension strictly less than 12 admits a positive grading. On the level of nilmanifolds, this implies that a nilmanifold which admits an Anosov diffeomorphism but no expanding map must have dimension at least 12. Since an Anosov diffeomorphism on an infra-nilmanifold lifts to one on the covering nilmanifold and the existence of expanding maps on infra-nilmanifolds only depends on the covering nilpotent Lie group, we find that this generalizes to infra-nilmanifolds as well. At the end of this section we will thus have proved the following.

\begin{theorem}
\label{thm:noexpanding}
    Every infra-nilmanifold of dimension $<12$ which admits an Anosov diffeomorphism also admits an expanding map.
\end{theorem}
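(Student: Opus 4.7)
The theorem reduces, by the discussion at the start of this section and in Section \ref{sec:prelim}, to showing that every rational nilpotent Anosov Lie algebra $\n^\Q$ of dimension $<12$ admits a positive grading. My first step is to apply Theorems \ref{thm:directSumAnosov} and \ref{thm:directSumExp} to reduce to the case where $\n^\Q$ is indecomposable and non-abelian. I fix a semi-simple Anosov automorphism $A$ of $\n^\Q$, with associated type $(n_1,\dots,n_c)$, and pass to a power of $A$ so that the product of the eigenvalues of $A|_{\n_1}$ equals $+1$; then $(1,\dots,1)\in\ker\phi_A$, hence $d_A\mid n_1$. By Lemma \ref{lem:blockDiagHypIntLike} each $n_i\geq 2$, and the case $n_1=2$ is immediately eliminated: the only non-trivial bracket on $\n_1^E$ has $A$-eigenvalue $\lambda\lambda^{-1}=1$, which is forbidden by hyperbolicity, so $[\n_1,\n_1]=0$ and $\n^\Q$ is abelian. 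The case $c=2$ admits the trivial positive grading by weights $(1,2)$. So I may assume $n_1\geq 3$ and $c\geq 3$, which together with $\sum n_i\leq 11$ leaves only finitely many types with $n_1\in\{3,4,5,6,7\}$ and $c\in\{3,4,5\}$.

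The main tool is Proposition \ref{prop:CnotTooHighThenGraded}: as soon as $c\leq d_A+1$, the given decomposition is already a positive grading. My plan is to verify this inequality case by case, subdividing according to whether $f_1$ is irreducible and whether its degree is prime. When $n_1\in\{3,5,7\}$ and $f_1$ is irreducible, Proposition \ref{prop:primeDegFullRank} gives $d_A=n_1$ and Proposition \ref{prop:n1PrimeFullRankThenDividesni} forces $n_1\mid n_i$ for every $i$. Combined with $\sum n_i\leq 11$, this eliminates every type with $c\geq 3$ and $n_1\in\{5,7\}$ as well as the unique $c=5$ candidate $(3,2,2,2,2)$ (which would need $3\mid 2$); the remaining $n_1=3$ types have $c\leq 4=d_A+1$. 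When $f_1$ is reducible, Lemma \ref{lem:AnosovPolyRank1} forces the rank-$1$ irreducible factors to have degree $2$ after a further power of $A$, and I would then compute $d_A$ directly from the explicit generators of $\ker\phi_A$, namely the multiplicative relations $\prod_j\lambda_j=1$ on each irreducible factor, together with any rank-coupling relations between distinct factors; transitivity of the Galois action on each irreducible factor constrains these further.

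The genuine obstacle arises in the cases where $f_1$ is reducible and the explicit computation of $\ker\phi_A$ yields $d_A$ too small for Proposition \ref{prop:CnotTooHighThenGraded} to apply directly. This happens most critically for $n_1=4$ at $c=3$ (types $(4,2,2),(4,2,3),(4,3,2)$ of dimension $\leq 11$) and at $c=4$ (the types $(4,2,2,2),(4,2,2,3),(4,2,3,2),(4,3,2,2)$, lying immediately below the extremal $(4,2,2,2,2)$ of the dim-$12$ example), and secondarily for $n_1=5$ with $f_1$ splitting as degree $2$ plus degree $3$, and for analogous splittings when $n_1\in\{6,7\}$. For each such case, I plan to combine two kinds of information: (i) the explicit form of $\ker\phi_A$ coming from the Galois action on each irreducible factor of $f_1$, which pins down $d_A$ as a specific divisor of $n_1$; and (ii) the consistency conditions imposed by the Jacobi identity and the stratum dimensions $n_2,\dots,n_c\geq 2$ on the eigenvalues of $A$ on the higher strata, mirroring the eigenvalue bookkeeping of Section \ref{sec:familyofexamples} but run in reverse to derive an incompatibility with the dimension bound $\sum n_i\leq 11$. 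These manipulations for $n_1=4,\,c=4$ I expect to be the hardest part, since they come closest to the dim-$12$ example where the analogous argument genuinely fails. Once completed, they close the case analysis and establish the theorem.
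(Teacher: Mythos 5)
Your overall framework — reduce to indecomposable rational Anosov Lie algebras via Theorems \ref{thm:directSumAnosov} and \ref{thm:directSumExp}, then try to force $c \le d_A + 1$ so that Proposition \ref{prop:CnotTooHighThenGraded} applies, using Propositions \ref{prop:primeDegFullRank} and \ref{prop:n1PrimeFullRankThenDividesni} when $f_1$ is irreducible of prime degree — is exactly the paper's strategy, and it does settle the irreducible cases $n_1\in\{3,5,7\}$. But the remaining cases are where the bulk of the proof lives, and the plan as written would not close them.

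The central gap is the reducible $n_1=4$ case. You propose to ``compute $d_A$ directly'' and then invoke Proposition \ref{prop:CnotTooHighThenGraded}, but this fails: for the non-full-rank case with $\lambda_1=\lambda_2^{-1}$, $\lambda_3=\lambda_4^{-1}$, one can have rank $1$ with an extra relation $\lambda_1^k=\lambda_3^l$ and $k+l$ odd, which forces $d_A=1$ (Proposition \ref{prop:possibleRanksFor4roots}); then $c\le d_A+1$ means $c\le 2$, which is false already for type $(4,2,2)$. Even at rank $2$ one only gets $d_A=2$, so the inequality fails for $c=4$, in particular for $(4,2,2,2)$. The paper instead proves $[\n_1,\n_1]\subset\n_2$ directly by Jacobi-identity bookkeeping together with replacing eigenvectors (e.g.\ $X_4\mapsto X_4-aY_1$) to kill the cross-brackets $[X_1,X_4]$, $[X_2,X_3]$; this is a genuinely different mechanism that your $d_A$-only plan does not supply. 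You also omit the parity constraint $2\mid n_i$ for $i\ge 2$ (Corollary \ref{cor:first4resteven}, itself resting on Proposition \ref{prop:AnosovPolyDeg4} and a Galois-group analysis), which is what prunes types such as $(4,2,3)$, $(4,3,2)$, and $(4,2,2,3)$ from the list; without it the case count you'd have to handle is much larger, and your enumeration is in any case incomplete (it misses $(4,4,2)$, $(4,2,4)$, $(4,2,5)$, $(4,3,3)$, etc., all of dimension $\le 11$).

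A second gap is $n_1=6$, which is not prime, so neither the full-rank/prime machinery nor a bare $d_A$ computation gets traction. The paper handles $(6,2,2)$ and $(6,2,3)$ by quotienting out $\gamma_3$ and invoking the classification of Anosov Lie algebras of type $(6,2)$ from \cite{lw09-1}, plus Lemma \ref{lem:gCanNotBeIsoToQuotient} to rule out one of the two possibilities; and it handles $(6,3,2)$ by an explicit analysis of the order-$3$ element of the Galois group. None of this is hinted at in your plan, and I do not see a way to substitute for it with the tools you list. Similarly, for reducible $f_1$ with $n_1\in\{5,7\}$ the paper uses Lemma \ref{lem:productOfPolyWithOnePrime} and Lemma \ref{lem:deg3deg4ExistenceOfGaloisEl} to force some $n_j$ large and thereby bound $c$, a Galois-theoretic argument rather than a $d_A$ computation. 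Finally, a small misstatement: Lemma \ref{lem:AnosovPolyRank1} concerns automorphisms whose \emph{overall} rank is $1$, not ``the rank-$1$ irreducible factors'' of a general reducible $f_1$, so it does not apply the way you invoke it.
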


We divide the proof into separate cases, depending on the value of $n_1$ of the type $(n_1, \ldots, n_c)$ of the Lie algebra. Since every 2-step nilpotent Lie algebra is positively graded, we only need to check Lie algebras of nilpotency class at least 3. Since by \cite[Proposition 2.3.]{lw08-1} we also know that $n_1 \geq 3$, $n_i \geq 2$ for all $2 \leq i \leq c$ and that $\dim \n^\Q < 12$, this leaves us only with the cases $n_1 = 3, 4, 5, 6, 7$. We are only interested in constructing a positive grading on the Lie algebras, therefore we do not have to determine the full Lie bracket on the Lie algebra. As before, we always assume that our Anosov automorphism $A: \n^\Q \to \n^\Q$ is semi-simple, has splitting field $E$ and satisfies $\det(A_1) = 1$.

    \subsection{Case $n_1 = 3$}
    
    Since in this case, every Anosov automorphism is of full rank, we immediately have the following result.
         
    \begin{prop}
        Let $\n^\Q$ be a nilpotent Anosov Lie algebra with $n_1 = 3$ and $\dim \n^\Q \leq 12$, then $\n$ has a positive grading. 
    \end{prop}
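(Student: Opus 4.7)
The plan is to chain together the full-rank machinery of Section \ref{sec:rank} and use that $n_1 = 3$ forces irreducibility of the characteristic polynomial of $A_1$. As always we may replace $A$ by a suitable power and assume $A$ semi-simple with $\det(A_1) = 1$, and take the decomposition $\n^\Q = \n_1 \oplus \ldots \oplus \n_c$ compatible with the lower central series.

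First I would show that the characteristic polynomial $f_1$ of $A_1$ is irreducible over $\Q$. Indeed, $f_1$ has degree $n_1 = 3$ and is an Anosov polynomial; by Lemma \ref{lem:blockDiagHypIntLike} any irreducible factor of $f_1$ over $\Q$ is itself Anosov, hence of degree at least $2$ (there are no Anosov polynomials of degree $1$). Since $3$ cannot be written as a sum of integers each at least $2$, the polynomial $f_1$ must be irreducible.

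Next, Proposition \ref{prop:primeDegFullRank} applies (irreducible Anosov polynomial of prime degree), so $A$ is of full rank. In particular $d_A = n_1 = 3$. Then Proposition \ref{prop:n1PrimeFullRankThenDividesni} gives that $n_1 = 3$ divides $n_i$ for every $2 \leq i \leq c$, so each $n_i \geq 3$. Combining this with $\dim \n^\Q = n_1 + \ldots + n_c \leq 12$ yields $3c \leq 12$, i.e.\ $c \leq 4 = d_A + 1$.

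Finally, Proposition \ref{prop:CnotTooHighThenGraded} produces a positive grading on $\n^\Q$ from the bound $c \leq d_A + 1$, which is exactly the conclusion. There is no real obstacle here beyond verifying the irreducibility step; the heavy lifting has already been carried out in Section \ref{sec:rank}, and the proof is really just a bookkeeping argument on the type $(n_1, \ldots, n_c)$ once full rank is established.
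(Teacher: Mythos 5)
Your proposal is correct and follows the paper's proof almost verbatim: irreducibility of $f_1$, full rank via Proposition \ref{prop:primeDegFullRank}, divisibility of the $n_i$ via Proposition \ref{prop:n1PrimeFullRankThenDividesni}, the bound $c \leq 4 = d_A + 1$, and then Proposition \ref{prop:CnotTooHighThenGraded}. The only cosmetic difference is how irreducibility is justified (you count degrees of irreducible Anosov factors, the paper notes a linear factor would force an eigenvalue $\pm 1$), but these are the same observation.
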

    \begin{proof}
        It is clear that $f_1$ is irreducible since linear factors give an eigenvalue equal to $\pm 1$. Therefore Proposition \ref{prop:primeDegFullRank} implies that $f_1$ satisfies the full rank condition. Using Proposition \ref{prop:n1PrimeFullRankThenDividesni}, we know that $3 \mid n_i$ for all $2 \leq i \leq c$ and thus that $\dim \n^\Q \geq 3\cdot c$. By our assumption on the dimension of $\n$ we thus have that $c \leq 4$. At last we use Proposition \ref{prop:CnotTooHighThenGraded} to conclude that $\n^\Q$ has a positive grading.
    \end{proof}
    
	\subsection{Case $n_1 = 4$}
	For this case, we will make a distinction according to the rank of the Anosov automorphism. This gives us more information about the eigenvalues by the following proposition.
	\begin{prop}
		\label{prop:possibleRanksFor4roots}
		Let $\n^\Q$ be a nilpotent Anosov Lie algebra of type $(4, n_2, \ldots, n_c)$ and let $\lambda_1$, $\lambda_2$, $\lambda_3$, $\lambda_4$ be the eigenvalues of $A_1$. Then, up to replacing $A$ by some power of $A$ and reordering the roots, we either have:
		\begin{enumerate}[label = (\roman*)]
			\item $A$ has full rank and $d_A = 4$;
			\item $A$ has rank $2$, the eigenvalues satisfy $\lambda_1 = \lambda_2^{-1}, \, \lambda_3 = \lambda_4^{-1}$ and $d_A = 2$; or
			\item $A$ has rank 1 and the eigenvalues satisfy $\lambda_1 = \lambda_2^{-1}, \, \lambda_3 = \lambda_4^{-1}, \lambda_1^k = \lambda_3^l$ with $l,k$ non-zero integers which are coprime. If $k + l$ is even then $d_A = 2$, otherwise $d_A = 1$.
		\end{enumerate}
	\end{prop}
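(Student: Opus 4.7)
The plan is to split the argument according to the rank of $A$, which must lie in $\{1,2,3\}$: the lower bound holds because rank $0$ would force the eigenvalues to generate a finite multiplicative group, contradicting that they have absolute value $\neq 1$, and the upper bound comes from the fact that, after squaring $A$ so that $\det A_1 = 1$, the vector $(1,1,1,1)$ lies in $\ker \phi_A$. Throughout I would freely replace $A$ by a power, which preserves the rank and, after a sufficiently high power, makes the multiplicative group generated by the eigenvalues torsion-free, so that $\ker \phi_A$ becomes a saturated sublattice of $\Z^4$.

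For case (i), full rank means $\ker \phi_A$ has rank $1$; since it contains the primitive vector $(1,1,1,1)$, it equals $\Z(1,1,1,1)$, giving $d_A = 4$ at once. Case (iii) is handled by Lemma \ref{lem:AnosovPolyRank1}, which lets me assume $f_1$ is a product of two degree-$2$ Anosov polynomials after a power, so that $\lambda_2 = \lambda_1^{-1}$ and $\lambda_4 = \lambda_3^{-1}$. Rank $1$ forces a multiplicative relation $\lambda_1^{m} = \zeta \lambda_3^{n}$ with $\zeta$ a root of unity; a further power removes $\zeta$, and dividing $m, n$ by their gcd yields coprime integers $k, l$ with $\lambda_1^k = \lambda_3^l$. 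A direct calculation then shows that $\ker \phi_A$ is generated by $(1,1,0,0)$, $(0,0,1,1)$, and $(k, 0, -l, 0)$, with $\psi$-images $2$, $2$, and $k - l$. Hence $d_A = \gcd(2, k-l)$, which equals $2$ when $k + l$ is even and $1$ otherwise.

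The rank-$2$ case (ii) is the main obstacle. Since $f_1$ has degree $4$ with no root of absolute value $1$, it either splits as a product of two irreducible quadratics or is itself irreducible of degree $4$. In the first scenario, Lemma \ref{lem:blockDiagHypIntLike} makes each quadratic Anosov, and after squaring $A$ its constant term is $+1$, so its two roots are mutual inverses, giving the required pairing directly. The irreducible subcase requires Galois theory: the action of $G = \gal(E, \Q)$ on $\Z^4$ by permuting coordinates intertwines with the multiplicative action on the image of $\phi_A$ via $\phi_A(\sigma \cdot z) = \sigma(\phi_A(z))$, so $\ker \phi_A$ is $G$-invariant and $\ker \phi_A \otimes \Q$ is a $G$-invariant plane in $\Q^4$ containing $\Q(1,1,1,1)$. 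The quotient is a $G$-invariant line inside the $3$-dimensional standard permutation representation of $G$, which forces $G$ to be imprimitive on $\{1,2,3,4\}$, i.e.~to preserve a partition into two pairs. After relabelling the roots so that the blocks are $\{1,2\}$ and $\{3,4\}$, the invariant line is spanned by $(1,1,-1,-1)$, and combined with $(1,1,1,1) \in \ker \phi_A$ one obtains $(\lambda_1 \lambda_2)^2 = (\lambda_3 \lambda_4)^2 = 1$; a final squaring ensures $\lambda_2 = \lambda_1^{-1}$ and $\lambda_4 = \lambda_3^{-1}$. In either subcase, the saturated rank-$2$ lattice $\ker \phi_A$ then necessarily equals $\Z(1,1,0,0) + \Z(0,0,1,1)$, since any additional element would yield a nontrivial relation $\lambda_1^a \lambda_3^b = 1$ and collapse the rank to $1$; therefore $d_A = \gcd(2,2) = 2$.

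The principal difficulty is the irreducible degree-$4$ subcase of rank $2$: the Galois-representation argument is essential for forcing the pairing of the roots into reciprocals, and one must first take a power of $A$ rendering $\ker \phi_A$ saturated, lest the $G$-invariant line in the plane only be reached by a scalar multiple of the desired primitive vector.
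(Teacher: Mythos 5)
Your proof is correct, and for the rank-$1$ and full-rank cases it runs along essentially the same lines as the paper's: case~(i) is immediate from $(1,1,1,1)$ being primitive, and case~(iii) uses Lemma~\ref{lem:AnosovPolyRank1} plus a direct identification of $\ker\phi_A$ (your generators $(1,1,0,0)$, $(0,0,1,1)$, $(k,0,-l,0)$ give a slightly cleaner computation of $d_A$ than the hyperplane relation the paper manipulates, but it is the same idea). The genuine departure is the irreducible rank-$2$ subcase. The paper attacks it combinatorially: it takes a second kernel vector $(z_1,\dots,z_4)$ with $z_1\ge z_2\ge z_3\ge z_4 = 0$, applies a Galois element sending $\lambda_3\mapsto\lambda_4$ to produce a new kernel vector, and extracts the pairing $\lambda_1\lambda_2=\lambda_3\lambda_4=1$ by comparing entries and exploiting linear dependence. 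You instead observe that $\ker\phi_A$ is $G$-invariant under the permutation action, so $\ker\phi_A\otimes\Q/\Q(1,1,1,1)$ is a $G$-invariant line in the degree-$3$ standard representation; since the standard representation of a $2$-transitive subgroup of $S_4$ is irreducible over $\Q$, $G$ must be one of $\Z_4$, $K_4$, $D_4$, which all preserve a two-block partition, and the invariant line is the corresponding block vector $(1,1,-1,-1)$ up to relabelling. This is a nice structural argument and buys a cleaner separation of where Galois theory enters (it rules out $A_4$, $S_4$ outright) from where the arithmetic enters ($\det A_1=1$ and saturation). You do leave two small steps implicit: that the invariant line is necessarily of block form (true for each of $\Z_4,K_4,D_4$ by the explicit decomposition of the standard representation, but worth stating), and the passage from $(1,1,-1,-1)\in\ker\phi_A\otimes\Q$ to $(1,1,0,0)\in\ker\phi_A$ — your appeal to saturation via a high power of $A$ works, though here it is actually unnecessary since $\Z(1,1,0,0)+\Z(0,0,1,1)$ is already the full intersection of its $\Q$-span with $\Z^4$, so once $(\lambda_1\lambda_2)^n=1$ a single further power suffices without invoking saturation of the whole kernel.
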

	\begin{proof}
		 As usual, let $f_1$ denote the characteristic polynomial of $A_1$. If $A$ has full rank, then it is clear that $d_A = 4$ since $\ker \phi_A = \Z \cdot (1, 1, 1, 1)$.
		
		 If $A$ has rank 2, we get that $\rank(\ker \phi_A) = 2$. We first consider the case when $f_1$ is reducible. Under this assumption, $f_1$ factors into two irreducible polynomials of degree 2 since it does not have linear factors and thus $\lambda_1 = \lambda_2^{-1}$ and $\lambda_3 = \lambda_4^{-1}$. Now consider the case when $f_1$ is irreducible. Since $\rank (\ker \phi_A) = 2$, there exists an element $(z_1, z_2, z_3, z_4) \in \ker \phi_A$ linearly independent from $(1, 1, 1, 1)$. This implies that not all $z_i$ are equal. Up to some permutation of the indices and possibly adding an integer multiple of $(1, 1, 1, 1)$, we may also assume that $z_1 \geq z_2 \geq z_3 \geq z_4 = 0$. Since $f_1$ is irreducible, there exists an element $\sigma$ in its Galois group such that $\sigma(\lambda_3) = \lambda_4$. It follows that $(z_{\sigma(1)}, z_{\sigma(2)}, 0 , z_{\sigma(4)})$ is also an element of $\ker \phi_A$. Since $\rank( \ker \phi_A) = 2$, there exist integers $r, s, t$ such that
	\begin{align*}
		r(z_{\sigma(1)}, z_{\sigma(2)}, 0 , z_{\sigma(4)}) = s(1,1,1,1) + t(z_1, z_2, z_3, 0).
	\end{align*}
	Note that this implies that either $z_{\sigma(1)} \geq z_{\sigma(2)} \geq 0 \geq z_{\sigma(4)}$ or $z_{\sigma(1)} \leq z_{\sigma(2)} \leq 0 \leq z_{\sigma(4)}$, depending on the sign of the integer $r \cdot t$. But since we also had $z_i \geq 0$ for all $i$, this gives that at least two of the $z_i$ are zero. This shows that there is, again up to permutation of the indices, an element in the kernel of the form $(u, -v, 0, 0)$ with $u > 0,v < 0$ integers and therefore $\lambda_1^u = \lambda_2^{v}$. Since $f_1$ was assumed to be irreducible, there is an element $\tau$ in the Galois group of $f_1$ such that $\tau(\lambda_1) = \lambda_2$. Therefore we get that $\tau(\lambda_1^{v}) = \tau(\lambda_1)^v = \lambda_2^v = \lambda_1^u$. Let $|\tau|$ denote the order of $\tau$. We get that 
	\begin{align*}
		\lambda_1^{v^{|\tau|}} = \tau^{|\tau|}(\lambda_1^{v^{|\tau|}}) = \lambda_1^{u^{|\tau|}}.
	\end{align*}
	Since $|\lambda_1| \neq 1$ this implies that $u = - v$ and thus that $\lambda_1^u = \lambda_2^{- u}$. We can then assume that $u = 1$ after taking the $u$-th power of the automorphism $A$ if necessary. So in both the reducible as the irreducible case, we get that $\lambda_1 = \lambda_2^{-1}$ and $\lambda_3 = \lambda_4^{-1}$, up to taking a power of $A$. Combined with the assumption that $\rank(\ker \phi_A) = 2$, this implies that $\ker \phi_A$ is generated by the elements $(1, 1, 0, 0)$ and $(0, 0, 1, 1)$, showing that $d_A = 2$.
		
	If $A$ has rank 1, Lemma \ref{lem:AnosovPolyRank1} tells us that, by taking a power of $A$ if necessary, we can assume that $f_1$ is the product of two Anosov polynomials of degree 2. This shows that up to changing the indices of the roots, $\lambda_1 = \lambda_2^{-1}$ and $\lambda_3 = \lambda_4^{-1}$. Since $A$ has rank 1, there must also be integers $l'$ and $k'$ such that $\lambda_1^{k'} = \lambda_3^{l'}$. If $m$ denotes the greatest common divisor of $k'$ and $l'$ it is clear that after taking the $m$-th power of $A$, the roots satisfy $\lambda_1 = \lambda_2^{-1}, \,\lambda_3 = \lambda_4^{-1}$ and $\lambda_1^k = \lambda_3^l$ with $k = k'/m$ and $l = l'/m$ coprime. Since $\rank(\ker\phi_A) = 3$ it follows that every element $(z_1, \ldots, z_4)$ in $\ker \phi_A$ must satisfy $l z_1 - lz_2 - kz_3 + kz_4 = 0$. This implies that $l(z_1 + z_2 + z_3 + z_4) = 2l z_2 + (k + l) z_3 + (l-k) z_4$. If $k + l$ is even, then $l-k$ is even and $l$ is odd. By consequence $z_1 + z_2 + z_3 + z_4$ is even and $d_A = 2$. If $k + l$ is odd, then $d_A$ can not be even since $k + l \in d_A \cdot \Z$. We also have that $\psi(1, 1, 0, 0) = 2 \in d_A \cdot \Z$ and thus it follows that $d_A = 1$. 
	\end{proof}
	
	This result tells us more about the possible Galois groups of an Anosov polynomial $f$ of degree 4. Let $\lambda_1, \ldots, \lambda_4$ denote the roots of $f$. If $f$ is reducible, it factors as two irreducible Anosov polynomials of degree 2. Since the Galois group can only permute the roots of each factor separately, we get that it is either isomorphic to $\Z_2$ or to $\Z_2 \oplus \Z_2$. If $f$ is irreducible, but does not satisfy the full rank condition, Proposition \ref{prop:possibleRanksFor4roots} tells us the relations $\lambda_1^k = \lambda_2^{-k}$ and $\lambda_3^k = \lambda_4^{-k}$ must hold for some integer $k > 0$. If the Galois group of $f$ contains an element $\sigma$ of order 3, then without loss of generality its action on the roots of $f$ is given by $\sigma(\lambda_1) = \lambda_2$, $\sigma(\lambda_2) = \lambda_3$, $\sigma(\lambda_3) = \lambda_1$ and $\sigma(\lambda_4) = \lambda_4$. This gives that
	\[ \lambda_1^k = \sigma(\lambda_3^k) = \sigma( \lambda_4^{-k}) =  \sigma^2(\lambda_4^{-k}) = \sigma^2(\lambda_3^k) = \lambda_2^k = \lambda_1^{-k}.\]
	This contradicts the fact that $\lambda_1$ has absolute value different from 1. By consequence there are no elements of order three in the Galois group of $f$, thus implying that it is isomorphic to either the cyclic group $\Z_4$, the Klein-four group $K_4$ or the dihedral group $D_4$ of order eight. We summarized this in Table \ref{tab:GaloisGroupsAnosovPolyUpTo4} below. We included the Anosov polynomials of degree two and three as well for completeness.
	
    \begin{table}[H]
    
    \begin{center}
        \begin{tabular}{ c | c | c | c | c}
        $\deg$ & full rank & irreducible & Galois group & order \\ \hline
        2 &  yes & yes & $\Z_2$ & 2\\
        3 & yes & yes & $\Z_3, S_3$ & 3, 6\\
        4 & yes & yes & $\Z_4, K_4, D_8, A_4, S_4$ & 4, 4, 8, 12, 24\\
        4 & no & yes & $\Z_4, K_4, D_8$ & 4, 4, 8\\
        4 & no & no & $\Z_2, \Z_2 \oplus \Z_2$ & 2, 4
        \end{tabular}
    \end{center}
    \caption{Possible Galois groups of Anosov polynomials up to degree 4.}
    \label{tab:GaloisGroupsAnosovPolyUpTo4}
    \end{table}
	
	This now gives the following information about the type;
	
	\begin{prop}
	    \label{prop:AnosovPolyDeg4}
	    Let $f$ be an Anosov polynomial of degree 4 with roots $\lambda_1, \lambda_2, \lambda_3, \lambda_4$ and let $e_1, e_2, e_3, e_4 \in \Z$ such that $\mu = \lambda_1^{e_1} \lambda_2^{e_2} \lambda_3^{e_3} \lambda_4^{e_4}$ is not a rational number. Then the minimal polynomial of $\mu$ over $\Q$ has even degree.
	\end{prop}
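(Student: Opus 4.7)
The plan is to translate the statement into Galois theory and proceed by case analysis using Table \ref{tab:GaloisGroupsAnosovPolyUpTo4}. Let $E$ be the splitting field of $f$, set $G = \gal(E,\Q)$ and $H = \Aut(E, \Q(\mu))$; by the fundamental theorem of Galois theory the degree of the minimal polynomial of $\mu$ over $\Q$ equals $[G:H]$, so the task reduces to showing that $[G:H]$ is even, or equivalently, that $H$ does not contain any Sylow $2$-subgroup of $G$. I would first observe that $\mu$ is an algebraic unit as a product of algebraic units with integer exponents, so if $\mu$ is rational it must equal $\pm 1$; hence ``$\mu$ is not rational'' is equivalent to ``$\mu \ne \pm 1$'', which is exactly the hypothesis under which Lemma \ref{lem:FullRankPermutationNotTransitive} delivers non-transitivity.

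Consulting Table \ref{tab:GaloisGroupsAnosovPolyUpTo4}, $G$ is one of $\Z_2$, $\Z_2\oplus\Z_2$, $\Z_4$, $K_4$, $D_8$, $A_4$, $S_4$. In the first five cases $G$ itself is a $2$-group, so a Sylow $2$-subgroup is all of $G$; if $H$ contained it we would have $H=G$ and thus $\mu \in \Q$, contradicting the hypothesis. This disposes of every reducible case and every irreducible non-full-rank case in one go.

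The remaining possibilities are $G = A_4$ and $G = S_4$, and by the table these arise only when $f$ is irreducible and of full rank, so Lemma \ref{lem:FullRankPermutationNotTransitive} is available. Here I would use the standard fact that the unique Sylow $2$-subgroup of $A_4$ is the Klein four-group $V_4 = \{e,(12)(34),(13)(24),(14)(23)\}$, while the Sylow $2$-subgroups of $S_4$ are the conjugates of $\langle(1234),(13)\rangle \cong D_8$; in either case the Sylow $2$-subgroup acts transitively on $\{1,2,3,4\}$. Consequently, if $H$ contained such a subgroup, $H$ itself would act transitively on the roots. Applying Lemma \ref{lem:FullRankPermutationNotTransitive} with the equivalence $i\sim j \Leftrightarrow e_i=e_j$ on $I=\{1,2,3,4\}$ gives $H \subseteq \bigoplus_{[i]\in I/\sim} S_{[i]}$; since elements of $\bigoplus S_{[i]}$ preserve each block, transitivity of $H$ collapses the partition to the single block $\{1,2,3,4\}$, forcing $e_1=e_2=e_3=e_4$. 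But then $\mu = (\lambda_1\lambda_2\lambda_3\lambda_4)^{e_1} = (\pm 1)^{e_1} = \pm 1$, contradicting the hypothesis.

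The main obstacle is precisely this $A_4/S_4$ case, in which the Galois group is not itself a $2$-group and a purely order-theoretic argument fails; the trick is to combine the full rank hypothesis (which is automatic exactly here by the table) with the elementary observation that Sylow $2$-subgroups of $A_4$ and $S_4$ act transitively on four points, so that Lemma \ref{lem:FullRankPermutationNotTransitive} can be invoked to force all exponents $e_i$ to be equal.
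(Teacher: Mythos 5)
Your proof is correct, and while it follows the same overall skeleton as the paper's (reduce to the possible Galois groups from Table \ref{tab:GaloisGroupsAnosovPolyUpTo4}, dispose of the $2$-group cases by a parity/orbit argument, and use Lemma \ref{lem:FullRankPermutationNotTransitive} for $A_4$ and $S_4$), the decisive step in the hard case is handled genuinely differently. The paper assumes the orbit of $\mu$ has size $3$, passes to the splitting field $\Q(g)$ of the minimal polynomial $g$ of $\mu$, enumerates the proper normal subgroups of $A_4$ and $S_4$ to which $\gal(E,\Q(g))$ must belong, rules out $A_4$ and $K_4$ by non-transitivity, and then derives a contradiction from the degree bound $[\Q(g):\Q]\leq 6 < 12 \leq [E:\Q]$. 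You instead observe that oddness of $[G:H]$ is equivalent to $H=\Aut(E,\Q(\mu))$ containing a Sylow $2$-subgroup of $G$, and that the Sylow $2$-subgroups of $A_4$ and $S_4$ (as transitive subgroups of $S_4$, which is what the image of $G$ actually is since $f$ is irreducible) already act transitively on the four roots, so $H$ would be transitive, directly contradicting Lemma \ref{lem:FullRankPermutationNotTransitive}. Your version is shorter and avoids both the normal-subgroup enumeration and the final degree count; it also makes transparent why only the parity of the degree is controlled. One shared cosmetic point: Lemma \ref{lem:FullRankPermutationNotTransitive} is stated for exponents in $\N$ while here the $e_i$ range over $\Z$, but its proof goes through verbatim for integer exponents, so this affects neither argument.
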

	\begin{proof}
	    Let $G$ denote the Galois group of $E$. If the order of $G$ is a power of 2, the orbit stabilizer theorem tells us that the orbit of $\mu$ under the action of $G$ must have either 1 element or an even number of elements. The former is not possible since it implies $\mu$ is rational. Therefore we conclude that the minimal polynomial of $\mu$ has an even number of roots and thus has even degree. 
	    Hence we only need to prove the statement in the case where the order of $G$ is not a power of 2. 
	    
	    From Table \ref{tab:GaloisGroupsAnosovPolyUpTo4} it follows that this only occurs when $f_1$ satisfies the full rank condition and $G$ is isomorphic to either $A_4$ or $S_4$. Again arguing as before we find by the orbit stabilizer theorem that the orbit of $\mu$ under the action of $G$ either has $3$ or an even number of elements. In the latter case we are done so assume the orbit counts 3 elements. This implies that the minimal polynomial of $\mu$, which we denote by $g$, has degree 3. The Galois group of $E$ over the splitting field of $g$ over $\Q$, denoted as $\gal(E, \Q(g))$, is a strictly smaller normal subgroup of $G$. For $S_4$ these subgroups are up to automorphism given by $A_4, K_4, \{1\}$ and for $A_4$ they are given by $K_4, \{1\}$. Note that $\gal(E, \Q(g)) \subset \Aut(E, \Q(\mu))$ can not be a transitive subgroup of $S_4$ by Lemma \ref{lem:FullRankPermutationNotTransitive}. This leaves us only with the case $\gal(E, \Q(g)) = \{1\}$, which in turn implies $\Q(g) = E$. This is impossible since by our assumption $g$ had degree 3 and thus $[\Q(g) : \Q] \leq 6$ whereas $[E:\Q] \geq 12$.
	\end{proof}

	By using our techniques, we find a new and shorter proof of \cite[Theorem 1.3.]{payn09-1}.
	
	\begin{cor}
		\label{cor:first4resteven}
		If $\n^\Q$ is an Anosov Lie algebra of type $(4, n_2, \ldots, n_c)$, then $2 | n_i$ for all $2 \leq i \leq c$.
	\end{cor}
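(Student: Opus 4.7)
The plan is to apply Proposition \ref{prop:AnosovPolyDeg4} directly to the eigenvalues of $A_i$ for each $i \geq 2$, and deduce that the characteristic polynomial $f_i$ of $A_i$ factors over $\Q$ into irreducible pieces of even degree.

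First, assume as usual that the Anosov automorphism $A$ is semi-simple and fix the corresponding decomposition $\n^\Q = \n_1 \oplus \ldots \oplus \n_c$ with restrictions $A_i : \n_i \to \n_i$. Let $\lambda_1, \lambda_2, \lambda_3, \lambda_4$ denote the eigenvalues of $A_1$; by Lemma \ref{lem:blockDiagHypIntLike} the characteristic polynomial $f_1$ is an Anosov polynomial of degree $4$. As explained in Section \ref{sec:rank}, since $A$ is an automorphism and the $i$-fold brackets of a basis of eigenvectors of $A_1$ span $\n_i^E$, every eigenvalue of $A_i$ has the form $\mu = \lambda_1^{e_1} \lambda_2^{e_2} \lambda_3^{e_3} \lambda_4^{e_4}$ for some non-negative integers $e_j$ with $e_1 + e_2 + e_3 + e_4 = i$.

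Next, I would verify that no such $\mu$ is rational when $i \geq 2$. Since $\mu$ is a product of algebraic units it is itself an algebraic unit, so if $\mu \in \Q$ it must equal $\pm 1$; but $A_i$ is hyperbolic by Lemma \ref{lem:blockDiagHypIntLike}, forcing $|\mu| \neq 1$. Hence $\mu \notin \Q$, and Proposition \ref{prop:AnosovPolyDeg4} applies to give that the minimal polynomial of $\mu$ over $\Q$ has even degree.

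Finally, the characteristic polynomial $f_i \in \Q[x]$ factors over $\Q$ into irreducible factors, each of which is, up to a scalar, the minimal polynomial of any of its roots, and each of its roots is an eigenvalue of $A_i$ and thus not rational. By the previous step, every irreducible factor of $f_i$ has even degree, so $n_i = \deg f_i$ is even. Since all of the real work is carried by Proposition \ref{prop:AnosovPolyDeg4} (which itself relied on a case-by-case Galois analysis summarized in Table \ref{tab:GaloisGroupsAnosovPolyUpTo4}), the main, and essentially only, obstacle is confirming that the eigenvalues of $A_i$ are genuinely irrational — handled by the unit/hyperbolicity argument above.
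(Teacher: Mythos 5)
Your proof is correct and follows essentially the same route as the paper: observe that the eigenvalues of $A_i$ are $i$-fold products of the roots of the degree-$4$ Anosov polynomial $f_1$, rule out rationality using that rational algebraic units are $\pm1$ while $A_i$ is hyperbolic, and invoke Proposition \ref{prop:AnosovPolyDeg4} to conclude every irreducible factor of $f_i$ has even degree.
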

	\begin{proof}
		Let $f_i$ denote the characteristic polynomial of $A_i$. It is clear that $f_1$ is an Anosov polynomial of degree 4. The roots of $f_i$ are $i$-fold products of the roots of $f_1$. They are also not rational since the only rational algebraic units are $1$ and $-1$ which have absolute value equal to 1 contradicting the hyperbolicity of $A$. Using Proposition \ref{prop:AnosovPolyDeg4} we get that the irreducible factors of each $f_i$ must be of even degree and thus each $f_i$ has even degree itself. This shows that $n_i$ is even for all $2 \leq i \leq c$.
	\end{proof}
    
    This leads to the main result about Anosov Lie algebras of type $(4, \ldots, n_c)$.

	\begin{prop}
	\label{prop:Anosovn1is4ThenPosGraded}
		Let $\n^\Q = \n_1 \oplus\ldots\oplus \n_{c}$ be a nilpotent Anosov Lie algebra with $n_1 = 4$ and $\dim \n < 12$, then $\n^\Q$ has a positive grading.
	\end{prop}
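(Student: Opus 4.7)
The approach is a case analysis driven by Proposition \ref{prop:possibleRanksFor4roots} (which pins down the eigenvalues of $A_1$) together with Proposition \ref{prop:CnotTooHighThenGraded} (which converts a bound on the nilpotency class into a positive grading).

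First I would bound $c$. Corollary \ref{cor:first4resteven} together with \cite[Proposition 2.3]{lw08-1} gives $n_i \geq 2$ and $n_i$ even for all $i \geq 2$, and $\dim \n^\Q < 12$ then forces $\sum_{i \geq 2} n_i \leq 7$, whence $c \leq 4$. Next I split on the rank of $A$. If $A$ has full rank then $d_A = 4$ and $c \leq 4 < d_A + 1$, so Proposition \ref{prop:CnotTooHighThenGraded} concludes immediately. If $A$ has rank $2$, or rank $1$ with $k+l$ even, then $d_A = 2$ and the proposition handles $c \leq 3$, leaving only the subcase $c = 4$ of type $(4,2,2,2)$. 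If $A$ has rank $1$ with $k + l$ odd then $d_A = 1$, which covers only $c \leq 2$; I must still consider $c = 3$ with types $(4,2,2)$, $(4,2,4)$, $(4,4,2)$, and $c = 4$ with $(4,2,2,2)$.

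For the residual subcases I would exploit the rigid eigenvalue structure of Proposition \ref{prop:possibleRanksFor4roots}: $\lambda_1^{\pm 1}, \lambda_3^{\pm 1}$ with $\lambda_1, \lambda_3$ multiplicatively independent in the rank $2$ case, and $\mu^{\pm l}, \mu^{\pm k}$ with $\gcd(k,l)=1$ in the rank $1$ case. Since $f_i$ is rational, its eigenvalues must assemble into Galois orbits whose total size equals $n_i$, which combined with the inclusion
\begin{equation*}
[\n_i, \n_j] \subset \bigoplus_{m \in \N} \n_{i+j+m\,d_A}
\end{equation*}
from Proposition \ref{prop:CnotTooHighThenGraded} leaves only a handful of compatible configurations. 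For each of these I would either refute the existence of the corresponding Anosov automorphism by a Galois-theoretic or dimension argument, or exhibit a positive grading directly by splitting $\n_1$ into two $A$-invariant subspaces $U$ and $V$ and assigning them distinct positive weights (for instance $1$ and $d_A + 1$), chosen so that the ``jumps'' in the lower-central-series grading allowed by the inclusion above are absorbed into the weight gap between $U$ and $V$.

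The main obstacle is the rank $2$ subcase of type $(4,2,2,2)$, because it is immediately below the dimension $12$ family $\m_k^\Q$ of Section \ref{sec:familyofexamples}, which genuinely fails to be positively graded. The crux is thus to detect precisely why one additional layer of dimension $2$ is needed to obstruct a positive grading: the combinatorics of Galois orbits on the (at most eight) bracket monomials in $\lambda_1^{\pm 1}, \lambda_3^{\pm 1}$ together with the rationality of $f_2, f_3, f_4$ must force the bracket structure on $(4,2,2,2)$ to be compatible with a splitting $\n_1 = U \oplus V$ of the kind described above, rather than producing the genuine obstruction that appears in dimension $12$.
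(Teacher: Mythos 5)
Your setup matches the paper's opening moves: Corollary~\ref{cor:first4resteven} together with \cite[Proposition 2.3]{lw08-1} forces $n_i\ge 2$ and $n_i$ even for $i\ge 2$, giving $c\le 4$ and the four admissible types $(4,4,2)$, $(4,2,4)$, $(4,2,2)$, $(4,2,2,2)$; and Proposition~\ref{prop:CnotTooHighThenGraded} handles the full-rank case at once. But the paper does not split further on rank: it observes that Proposition~\ref{prop:possibleRanksFor4roots} already lets one write the eigenvalues of $A_1$ as $\lambda,\lambda^{-1},\mu,\mu^{-1}$ in every non-full-rank case, and then treats rank~$1$ and rank~$2$ uniformly by an explicit bracket analysis for each type. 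Your rank split leaves you with the same four types to analyze in the rank~$1$, $k+l$ odd subcase (where $d_A=1$ gives you only $c\le 2$), so nothing is saved, and all of the substance of the proposition resides in exactly these residual cases.

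That residual analysis is where your sketch has a genuine gap. The device you propose --- split $\n_1 = U\oplus V$ into $A$-invariant subspaces and assign them distinct positive weights so that the ``jumps'' are absorbed --- cannot work in the rank-one cases. Take type $(4,2,2)$ with $\lambda=\mu^{-2}$, eigenvectors $X_1,X_2,X_3,X_4$ of eigenvalues $\mu^{-2},\mu^{2},\mu,\mu^{-1}$, brackets $[X_1,X_3]=Y_1$, $[X_2,X_4]=Y_2$, $[X_1,Y_1]=Z_1$, $[X_2,Y_2]=Z_2$, and both jump brackets $[X_1,X_4]=aZ_1$, $[X_2,X_3]=bZ_2$ nonzero (this is consistent with hyperbolicity since $\lambda\mu^{-1}=\lambda^2\mu$ when $\lambda\mu^2=1$). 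If the $X_i$ are homogeneous of positive weights $w_1,\dots,w_4$ in a positive grading, the bracket relations force
\begin{equation*}
w_1 + w_4 = 2w_1 + w_3, \qquad w_2 + w_3 = 2w_2 + w_4,
\end{equation*}
and adding these gives $w_1 + w_2 = 0$, which is impossible. So no reweighting of the eigenbasis can succeed here. What the paper actually does is perform a \emph{change of basis} before grading: replace $X_4$ by $X_4 - aY_1$ (and similarly adjust $X_3$) to kill the jump brackets, after which the plain lower-central-series decomposition $\n_1\oplus\n_2\oplus\n_3$ is already a positive grading and $\n^E$ is seen to be a sum of two filiform algebras. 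Your proposal never mentions this step, and without it your strategy is blocked. The alternative you float --- refuting existence of the Anosov automorphism for a residual type --- is also unavailable: all four residual types do occur, and $\m_k^\Q$ from Section~\ref{sec:familyofexamples} realizes type $(4,2,2,2,2)$, one step beyond the crucial $(4,2,2,2)$ case.
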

	\begin{proof}
Since $2$-step nilpotent Lie algebras always have a positive grading, we will only consider Lie algebras of nilpotency class at least 3. Let $f_1$ be the characteristic polynomial of $A_1$ and $E$ its splitting field over $\Q$. Let $X_1, \ldots, X_4 \in \n_1^E$ be a basis of eigenvectors of $A_1$ with eigenvalues $\lambda_1, \ldots, \lambda_4$, respectively. If these roots have full rank, then $\n^\Q$ is positively graded by Proposition \ref{prop:CnotTooHighThenGraded} where we keep in mind that $\dim \n^\Q < 12$, so we focus on the case when $\lambda_1, \ldots, \lambda_4$ do not have full rank. We can assume by Proposition \ref{prop:possibleRanksFor4roots} that $\lambda_1 = \lambda_2^{-1}$ and $\lambda_3 = \lambda_4^{-1}$. Let us write for simplicity $\lambda_1 = \lambda$ and $\lambda_3 = \mu$, then the eigenvalues of $A_1$ take the form $\lambda, \lambda^{-1}, \mu, \mu^{-1}$. From here on we prove the statement separately for each possible type. Recall that by Corollary \ref{cor:first4resteven} all $n_i$ are even, leading to four possibilities for the type.
		
		\begin{itemize}
			\item \underline{Type $(4, 4, 2)$:} We have that $\n_2^E$ is spanned by the vectors $\{[X_i, X_j] \mid 1 \leq i < j \leq 4 \}$. The brackets $[X_1, X_2]$ and $[X_3, X_4]$ have to be zero, since if this is not the case they are eigenvectors of $A$ with eigenvalues $\lambda \lambda^{-1} = 1$ and $\mu  \mu^{-1} = 1$, respectively. This would contradict the hyperbolicity of $A$. Because $\dim \n_2 = 4$ it follows that a basis for $\n_2^E$ can be given by the vectors $[X_1, X_3]$, $[X_2, X_4]$, $[X_1,X_4]$ and $[X_2, X_3]$. Therefore we have that $[\n_1, \n_1] \subset \n_2$, which implies that $\n = \n_1 \oplus \n_2 \oplus \n_3$ is a positive grading for $\n^\Q$.
			
			\item \underline{Type $(4,2,4)$:} The eigenvalues of $A_2$ must be 2-fold products of the eigenvalues of $A_1$. Without loss of generality we can assume that one eigenvalue of $A_1$ is given by $\lambda \mu$. It follows that the other eigenvalue of $A_1$ is the inverse namely $\lambda^{-1} \mu^{-1}$. The brackets $[X_2, X_3]$ and $[X_1, X_4]$ must lie in $\gamma_3(\n^E)$, since otherwise their eigenvalues given by $\lambda^{-1} \mu$ and $\lambda \mu^{-1}$ respectively, must equal one of the two eigenvalues of $A_2$ which gives either that $\mu^2 = 1$ or that $\lambda^2 = 1$, a contradiction. We also know that $[X_1, X_2] = 0$ and $[X_3, X_4] = 0$ since otherwise these would be eigenvectors with eigenvalue 1. Therefore it is clear that a basis of eigenvectors of $A_2$ is given by $Y_1 := [X_1, X_3]$ and $Y_2 := [X_2, X_4]$.
			
			The eigenvectors of $A_3$ can be written as a linear combination of the elements $[X_i, Y_j]$.  Using the Jacobi identity we get that
			\begin{align*}
				[X_2, Y_1] &= [X_1, \underbrace{[X_2, X_3]}_{\in \gamma_3(\n^E)}] + [X_3, \bcancel{[X_1, X_2]}] = 0,\\
				[X_4, Y_1] &= [X_1, \bcancel{[X_4, X_3]}] + [X_3, \underbrace{[X_4, X_2]}_{\in \gamma_3(\n^E)}] = 0,\\
				[X_1, Y_2] &= [X_2, \underbrace{[X_1, X_4]}_{\in \gamma_3(\n^E)}] + [X_4, \bcancel{[X_2, X_1]}] = 0,\\
				[X_3, Y_2] &= [X_2, \bcancel{[X_3, X_4]}] + [X_4, \underbrace{[X_2, X_3]}_{\in \gamma_3(\n^E)}] = 0.
			\end{align*}
			This gives that $[X_1, Y_1]$, $[X_3, Y_1]$, $[X_2, Y_2]$ and $[X_4, Y_2]$ form a basis of eigenvectors for $A_3$. Its eigenvalues are therefore given by $\lambda^2 \mu$, $\lambda \mu^2$, $\lambda^{-2} \mu^{-1}$ and $\lambda^{-1}\mu^{-2}$, respectively. 
			
			We know that $[X_2, X_3], [X_1, X_4] \in \n_3^E$. If they are non-zero then they must be eigenvectors of $A_3$ and thus this gives one of the following equalities:
			\begin{align*}
				\lambda^{-1} \mu = \begin{cases}
					\lambda^2 \mu &\Rightarrow \lambda^3 = 1\\
					\lambda \mu^2 &\Rightarrow \lambda^2 \mu = 1\\
					\lambda^{-2} \mu^{-1} &\Rightarrow \lambda \mu^2 = 1\\
					\lambda^{-1} \mu^{-2} &\Rightarrow \mu^3 = 1.
				\end{cases}, \quad \quad \quad 
				\lambda \mu^{-1} = \begin{cases}
					\lambda^2 \mu &\Rightarrow \lambda \mu^2 = 1\\
					\lambda \mu^2 &\Rightarrow \mu^3 = 1\\
					\lambda^{-2} \mu^{-1} &\Rightarrow \lambda^3 = 1\\
					\lambda^{-1} \mu^{-2} &\Rightarrow \lambda^2 \mu = 1.
				\end{cases}
			\end{align*}
			which gives in each case a contradiction, either because $\lambda$ or $\mu$ would be a root of unity or because an eigenvalue of $A_3$ would be equal to 1. So we must conclude that $[X_2, X_3] = [X_1, X_4] = 0$. By consequence $[\n_1,  \n_1] \subset \n_2$ and thus the decomposition $\n = \n_1 \oplus \n_2 \oplus \n_3$ gives a positive grading.
			\item \underline{Type $(4,2,2)$:} Just as in the previous case, we find that $Y_1 := [X_1, X_3]$ and $Y_2 := [X_2, X_4]$ form a basis of eigenvectors for $A_2$, with eigenvalues $\lambda \mu$ and $\lambda^{-1} \mu^{-1}$, respectively. Moreover, an identical argument as before shows that $[X_2, Y_1] = [X_4, Y_1] = [X_1, Y_2] = [X_3, Y_2] = 0$, implying that the eigenvalues on $\n_3^E$ are, by interchanging $\lambda$ and $\mu$ if necessary, of the form $\lambda^2 \mu$ and $\lambda^{-2} \mu^{-1}$, with corresponding eigenvectors $Z_1 := [X_1,Y_1]$ and $Z_2 := [X_2,Y_2]$. 
			
			If $[X_1, X_4]$ is non-zero, it is an eigenvector for eigenvalue $\lambda \mu^{-1}$, which is different from $\lambda^{-2} \mu^{-1}$ because otherwise $\lambda$ would have absolute value $1$. In particular, $[X_1, X_4] = a [X_1, Y_1]$ for some $a \in E$. By replacing the vector $X_4$ by $X_4 - a Y_1$, which is again an eigenvector for the same eigenvalue, we get that $[X_1,X_4] = 0$, but with all the other relations identical. Similarly, we can replace $X_3$ to achieve $[X_2, X_3] = 0$, showing that $[\n_1^E,\n_1^E] = \n_2^E$ and thus realizing a positive grading $\n_1^E \oplus \n_2^E \oplus \n_3^E$ for $\n^E$, which in turn implies $\n^\Q$ admits a positive grading. In fact, this argument shows that $\n^E$ is a direct sum of two filiform Lie algebras, one generated by the elements $X_1, X_3$ and the other by $X_2,X_4$. Note that if $[X_3,Y_1]$ is non-zero, it is an eigenvector of eigenvalue $\lambda \mu^2$, which must be different from $\lambda^{-2} \mu^{-1}$ and thus $[X_3,Y_1] = b [X_1,Y_1]$ for $b \in E$. By replacing $X_3$ by $X_3 - b X_1$, we can thus assume that $[X_3,Y_1] = 0$. Similarly, we realize the assumption $[X_4,Y_2] = 0$.
			
			\item \underline{Type $(4,2,2,2)$:} We use the same notations and basis for the spaces $\n_1^E \oplus \n_2^E \oplus \n_3^E$ as for type $(4,2,2)$, in particular with the property that $[X_1,X_4] \in \n_4^E$ and $[X_2,X_3] \in \n_4^E$. By using the Jacobi identity, we see that $[X_2, Z_1] = [X_3, Z_1] = [X_4, Z_1] = [X_1, Z_2] = [X_3, Z_2] = [X_4, Z_2] = 0$. In particular, a basis of eigenvectors for $A_4$ is given by $[X_1,Z_1]$ and $[X_2,Z_2]$ with eigenvalues $\lambda^3 \mu$ and $\lambda^{-3} \mu^{-1}$. Now, if $[X_1,X_4]$ is non-zero, it lies in $\n_4$ and has eigenvalue $\lambda \mu^{-1}$ which is different from $\lambda^{-3} \mu^{-1}$. In particular, $[X_1,X_4] = a [X_1,Z_1]$ for some $a \in E$, and thus by replacing $X_4$ by $X_4 - a Z_1$ we get that $[X_1,X_4] = 0$. In a similar fashion, we can assume $[X_2,X_3] = 0$ and thus $[\n_1,\n_1] = \n_2$, showing that the decomposition $\n_1^E \oplus \n_2^E \oplus \n_3^E \oplus \n_4^E$ gives a positive grading for $\n^E$. In fact, we have shown that $\n^E$ is a direct sum of two filiform Lie algebras. 		\end{itemize}	\end{proof}
	Due to the proof for Anosov Lie algebras of type $(4,2,2)$ and $(4,2,2,2)$, one might conjecture that for every Anosov Lie algebra $\n^\Q$ of type $(4,2,\ldots, 2)$ it holds that $\n^E$ is isomorphic to the direct sum of two filiform Lie algebras. The family of examples in Section \ref{sec:familyofexamples} shows that this is not the case, since they have no positive grading. 
	
	\subsection{Case $n_1= 5$}
	
We first consider the possibilities if the polynomial $f_1$ is reducible.
	\begin{prop}
		\label{prop:n1is5fisreducible}
		Let $\n^\Q = \n_1 \oplus \ldots \oplus \n_c$ be an Anosov Lie algebra with $n_1 = 5$ and $c \leq 4$. If the characteristic polynomial of $A_1$ is reducible and $\n^\Q$ has no non-trivial abelian factor, then $n_2 \geq 6$.
	\end{prop}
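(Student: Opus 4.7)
The plan is to factor $f_1$ and exploit the Galois action on eigenvalues. Since $f_1$ has degree $5$ and can have no linear factor (rational algebraic units are $\pm 1$, excluded by hyperbolicity), reducibility forces $f_1 = g h$ with $g$ irreducible of degree $2$ and $h$ irreducible of degree $3$. After replacing $A$ by a suitable power, the roots of $g$ are $\lambda, \lambda^{-1}$, while Proposition \ref{prop:primeDegFullRank} says $h$ has full rank, so $\ker \phi_h$ is generated by $(1,1,1)$ and $\mu_1 \mu_2 \mu_3 = 1$. Fix an eigenvector basis $X_1, \ldots, X_5$ of $A_1$ on $\n_1^E$ with eigenvalues $\lambda, \lambda^{-1}, \mu_1, \mu_2, \mu_3$. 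Hyperbolicity forces $[X_1, X_2] = 0$, while the other nine brackets have eigenvalues in the three families $\{\lambda \mu_i\}_i$, $\{\lambda^{-1} \mu_i\}_i$, $\{\mu_i \mu_j\}_{i<j}$.

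The crux of the argument is to show that these nine values are pairwise distinct, so that each nonzero bracket spans a distinct one-dimensional eigenspace in $\n_2^E$. Distinctness within each family follows from distinctness of the $\mu_i$ and full rank of $h$. For cross-family coincidences I would argue case by case. An equality $\lambda \mu_i = \lambda^{-1} \mu_j$ yields $\lambda^2 = \mu_j \mu_i^{-1}$, and applying the $3$-cycle $\sigma \in \gal(E, \Q)$ furnished by Lemma \ref{lem:IrrPrimeDegreeCyclicPerm} produces a relation in $\ker \phi_h$ independent of $(1,1,1)$, contradicting full rank. An equality $\lambda \mu_i = \mu_j \mu_k$ with $i \in \{j, k\}$ forces $\lambda \in \Q(\mu_\ell)$, which contradicts $[\Q(\lambda):\Q] = 2$ and $[\Q(\mu_\ell):\Q] = 3$; if instead $\{i, j, k\} = \{1, 2, 3\}$, then multiplying the three $\sigma$-translates of the relation gives $\lambda^3 = \mu_1 \mu_2 \mu_3 = 1$, contradicting hyperbolicity. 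The case $\lambda^{-1} \mu_i = \mu_j \mu_k$ is symmetric. This distinctness step is the main obstacle.

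Next I would group the nine nontrivial brackets into $\sigma$-orbits, giving the subspaces $V_1 = \mathrm{span}_E\{[X_1, X_j]\}_{j=3,4,5}$, $V_2 = \mathrm{span}_E\{[X_2, X_j]\}_{j=3,4,5}$, and $V_3 = \mathrm{span}_E\{[X_i, X_j]\}_{3 \le i < j \le 5}$. Since $\sigma$ acts on the eigenvectors by permutation up to scalars, all three brackets in an orbit vanish together, so each $V_k$ contributes either $0$ or $3$ to $\dim \n_2$. Irreducibility of $g$ furnishes $\tau \in \gal(E, \Q)$ with $\tau(\lambda) = \lambda^{-1}$; combined with appropriate $\sigma$-translates, $\tau$ maps $V_1$ onto $V_2$, showing $V_1 = 0 \Leftrightarrow V_2 = 0$.

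Finally, the no-abelian-factor hypothesis rules out $V_1 = V_2 = 0$: in that case the Galois-invariant subspace $W = \mathrm{span}_E\{X_1, X_2\} \subset \n_1^E$ is central in $\n^E$ and trivially meets $[\n^E, \n^E] \subset \n_2^E \oplus \cdots \oplus \n_c^E$, so by the characterization of abelian factors preceding Lemma \ref{lem:blockDiagHypIntLike}, $W$ is an abelian factor of $\n^E$. Since $m(\n^E) = m(\n^\Q)$, this produces a non-trivial abelian factor of $\n^\Q$, contradicting the hypothesis. Hence $V_1, V_2 \ne 0$ and $\dim \n_2 \ge 6$.
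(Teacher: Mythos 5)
Your argument has a genuine gap at the sentence asserting that "each nonzero bracket spans a distinct one-dimensional eigenspace in $\n_2^E$." What the pairwise distinctness of the nine two-fold products shows (and you do establish this correctly) is that the nonzero brackets are linearly independent inside $\gamma_2(\n^E) = \n_2^E \oplus \n_3^E \oplus \n_4^E$; it does not place them in $\n_2^E$. A nonzero bracket $[X_1,X_3]$ with eigenvalue $\lambda\mu_1$ could a priori lie entirely in $\n_3^E$ or $\n_4^E$, in which case $V_1$ contributes nothing to $n_2$ and the conclusion $n_2 \ge 6$ does not follow from $V_1, V_2 \ne 0$. Closing this requires showing $\lambda\mu_1$ cannot also occur as an eigenvalue of $A_k$ for $k\in\{3,4\}$ — that is, cannot equal a three- or four-fold product of $A_1$-eigenvalues — which is strictly stronger than distinctness among the two-fold products. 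This is precisely where the hypothesis $c\le 4$ enters, and you never use it, which is the clearest sign of the omission.

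The paper supplies exactly this step before anything else: if $\lambda\mu_1$ is a $k$-fold product of $A_1$-eigenvalues, then applying the $3$-cycle $\sigma$ (which fixes $\lambda$) and full rank of the cubic factor, via Lemma~\ref{lem:FullRankPermutationNotTransitive}, forces the exponents on $\mu_1,\mu_2,\mu_3$ to be equal, and a short computation shows $k\equiv 2 \pmod 3$; combined with $c\le 4$ this pins down $k=2$, so a nonzero mixed bracket lands in $\n_2^E$. With this supplied, the rest of your argument goes through, and it actually diverges from the paper's in the second half: you use the involution $\tau$ and the abelian-factor hypothesis to force \emph{both} $V_1$ and $V_2$ nonzero (hence six eigenvalues on $\n_2$), whereas the paper uses the abelian-factor hypothesis to get a single nonzero mixed bracket and then the constraint $\det A_2 = \pm 1$ together with $\lambda\mu_1\cdot\lambda\mu_2\cdot\lambda\mu_3 = \lambda^3 \ne \pm 1$ to force a second $\sigma$-orbit. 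Both routes are fine, but both need the $\n_2^E$-containment step first.
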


	\begin{proof}
	Denote the characteristic polynomial of $A_1$ by $f_1$. Because $f_1$ is reducible it is equal to a product $f_1 = g_1 g_2$ with $g_1$ and $g_2$ irreducible of degree $3$ and $2$, respectively. After squaring the Anosov automorphism $A$ if necessary we can assume that the constant terms of $g_1$ and $g_2$ are equal to $1$. Let $E, E_1$ and $E_2$ denote the splitting fields of $f, g_1$ and $g_2$, respectively. Because $E_1 \subset E$ and $E_2 \subset E$ are subfields, we get that both $3$ and $2$ divide the order of $\gal(E, \Q)$. Let $\sigma \in \gal(E, \Q)$ be an element of order $3$. Let $\lambda_1, \lambda_2, \lambda_3$ denote the roots of $g_1$ with corresponding eigenvectors $X_1, X_2, X_3 \in \n_1^E$ and $\mu, \mu^{-1}$ the roots of $g_2$ with corresponding eigenvectors $Y_1, Y_2 \in \n_1^E$. We must have, up to reordering the $\lambda_i$, that
		\begin{align*}
			\sigma(\lambda_1) = \lambda_2, \quad \sigma(\lambda_2) = \lambda_3, \quad \sigma(\lambda_3) = \lambda_1\\
			\sigma(\mu) = \mu, \quad \sigma(\mu^{-1}) = \mu^{-1}.
		\end{align*}
		This follows from the fact that $\sigma$ must permute the roots of each irreducible polynomial separately and from the orbit-stabilizer theorem which implies that the orbit of an element under $\sigma$ must have either $1$ or $3$ elements. 
		
		We first show that $[X_1,Y_1] \in  \n_2^E$. Indeed, if the vector $[X_1, Y_1]$ is non-zero, it is an eigenvector of $A$ with eigenvalue $\lambda_1 \mu$. All the eigenvalues of $A_k$ are $k$-fold products of eigenvalues of $A_1$. So if the eigenvalue $\lambda_1 \mu$ occurs on $\n_k^E$, we must have $\lambda_1 \mu = \lambda_1^{e_1} \lambda_2^{e_2} \lambda_3^{e_3} \mu^s$ for some positive integers $e_i, s \in \N$ with $e_1 + e_2 + e_3 + s = k$. This implies that $\mu^{1 - s} = \lambda_1^{e_1-1} \lambda_2^{e_2} \lambda_3^{e_3}$. Clearly the left hand side is invariant under $\sigma$ so the same holds for the right hand side. By Proposition \ref{prop:primeDegFullRank} the roots $\lambda_1, \lambda_2, \lambda_3$ satisfy the full rank condition. Using that $\lambda_1^{e_1-1} \lambda_2^{e_2} \lambda_3^{e_3}$ is invariant under $\sigma$, Lemma \ref{lem:FullRankPermutationNotTransitive} implies that $e_1 - 1 = e_2 = e_3$. From this we get that $\mu^{1 - s} = 1$ and thus since $\mu$ is not a root of unity that $s = 1$. By consequence we also have that $k = e_1 + e_2 + e_3 + s = e_1 + (e_1 - 1) + (e_1 - 1) + 1 = 3e_1 - 1$. Since $c$ is assumed to be less or equal than 4 this proves that $k = 2$ and thus $[X_1, Y_1] \in \n_2^E$. Analogously it can be proven that $[X_i, Y_j] \in \n_2^E$ for all $1 \leq i \leq 3$ and $1 \leq j \leq 2$. 
		
		Therefore we know that either all the brackets $[X_i, Y_j]$ are zero or that $A_2$ has an eigenvalue of the form $ \lambda_i \mu^{\pm 1}$. In the first case $\text{span}_E\{Y_1, Y_2\}$ is an abelian factor of $\n^E$. By consequence the rational Lie algebra $\n^\Q$ has a non-trivial abelian factor as well which is in contradiction with the assumption. We thus have without loss of generality, that $\lambda_1 \mu$ is an eigenvalue of $A_2$. By letting $\sigma$ act on this we get that $\lambda_1 \mu, \lambda_2 \mu, \lambda_3 \mu$ are distinct eigenvalues of $A_2$. Their product is equal to $\lambda_1 \lambda_2 \lambda_3 \mu^3 = \mu^3$ which can not be equal to 1 since $\mu$ is not a root of unity. Therefore $A_2$ must have at least one other eigenvalue. This can be either one of the form $\lambda_i \lambda_j$ or of the form $\lambda_i \mu^{-1} $. In either case, by letting $\sigma$ act on these roots we see they each have at least 2 other conjugates. By consequence $n_2$ must be greater or equal than 6.
	\end{proof}
	
	As a consequence, we can study positive gradings on Anosov Lie algebras of type $(5, \ldots)$.
	
	\begin{cor}
		Let $\n^\Q = \n_1 \oplus\ldots\oplus \n_{c}$ be a nilpotent Anosov Lie algebra with $n_1 = 5$ and $\dim \n^\Q < 12$, then $\n^\Q$ has a positive grading.
	\end{cor}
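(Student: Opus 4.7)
The plan is to reduce to cases already treated in this section through a case analysis on nilpotency class, presence of a non-trivial abelian factor, and reducibility of $f_1$. Since every 2-step nilpotent Lie algebra is positively graded, I will assume the nilpotency class satisfies $c \geq 3$. Combined with $n_1 = 5$, the bound $n_i \geq 2$ for $2 \leq i \leq c$, and $\dim \n^\Q < 12$, this forces $c \in \{3,4\}$, which in particular supplies the hypothesis $c \leq 4$ needed to apply Proposition \ref{prop:n1is5fisreducible}.

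If $\n^\Q$ admits a non-trivial abelian factor, I will write $\n^\Q = \A \oplus \g$ with $\g$ the direct sum of the non-abelian indecomposable factors. By Theorem \ref{thm:directSumAnosov} both $\A$ and $\g$ are Anosov, so $\dim \A \geq 2$; since each non-abelian Anosov Lie algebra satisfies $n_1 \geq 3$ and $n_1(\A) + n_1(\g) = 5$, this forces $\dim \A = 2$ and $n_1(\g) = 3$. Then $\g$ is Anosov with $n_1(\g) = 3$ and $\dim \g < 10$, so the case $n_1 = 3$ already established in this section produces a positive grading on $\g$, and Theorem \ref{thm:directSumExp} transports this to a positive grading on $\n^\Q$.

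If $\n^\Q$ has no non-trivial abelian factor, two subcases remain. When $f_1$ is reducible, Proposition \ref{prop:n1is5fisreducible} gives $n_2 \geq 6$, so $\dim \n^\Q \geq 5 + 6 + 2 = 13$, contradicting $\dim \n^\Q < 12$. When $f_1$ is irreducible of prime degree $5$, Proposition \ref{prop:primeDegFullRank} ensures it has full rank, so $\ker \phi_A$ has rank one; under the standing assumption on the constant term of $f_1$, it contains the primitive vector $(1,1,1,1,1)$ and is therefore generated by it, whence $d_A = \psi(1,1,1,1,1) = 5$. Since $c \leq 4 \leq d_A + 1$, Proposition \ref{prop:CnotTooHighThenGraded} produces the desired positive grading on $\n^\Q$.

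The substantive step is ruling out the reducible case for $f_1$, which is already accomplished by Proposition \ref{prop:n1is5fisreducible}; once that is in hand, the corollary amounts to organising the cases and combining the rank machinery of this section with the $n_1 = 3$ result.
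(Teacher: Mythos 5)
Your proof is correct and follows essentially the same route as the paper: the dichotomy between full rank (Proposition~\ref{prop:primeDegFullRank} for prime degree, then Proposition~\ref{prop:CnotTooHighThenGraded}) and reducible $f_1$ (Proposition~\ref{prop:n1is5fisreducible}), with the abelian factor case dispatched via Theorems~\ref{thm:directSumAnosov} and~\ref{thm:directSumExp} and the already-settled $n_1=3$ case. The only differences are organizational and expository: you spell out the abelian-factor reduction (forcing $\dim\mathfrak{a}=2$, $n_1(\g)=3$) and re-derive $d_A=n_1$ in the full-rank case, both of which the paper leaves implicit.
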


	\begin{proof}
		Let $f_1$ be the characteristic polynomial of $A_1$. By Proposition \ref{prop:CnotTooHighThenGraded} we know that if the roots of $f_1$ have full rank, then the Lie algebra $\n^\Q$ has a positive grading. In the other case, if the roots of $f_1$ are not of full rank, then Proposition \ref{prop:primeDegFullRank} implies that $f_1$ is reducible. We can also assume that $\n^\Q$ has no non-trivial abelian factor since otherwise our work in the previous sections and Theorem \ref{thm:directSumAnosov} and \ref{thm:directSumExp} assure that $\n^\Q$ has a positive grading. By consequence we can apply Proposition \ref{prop:n1is5fisreducible} and thus $n_2 \geq 6$. This implies $c = 2$ because of the assumption on our dimension. In particular, $\n$ has a positive grading. 
	\end{proof}

	\subsection{Case $n_1 = 6$}
	
	Since for any $2 \leq i \leq c$ we have that $n_i \geq 2$, the only possible types of dimension $< 12$ we need to check are $(6, 2, 2)$, $(6, 2, 3)$ and $(6, 3, 2)$.
	
	For types $(6, 2, 2)$ and $(6, 2, 3)$, we will use the fact that for these types the quotient by the third ideal in the upper central series gives an Anosov Lie algebra of type $(6, 2)$ which have been classified in \cite{lw09-1}. Let $\h_3$ denote the Heisenberg Lie algebra of dimension 3. If $\n^\Q$ is an Anosov Lie algebra of type (6, 2), there are, up to isomorphism, only two possibilities for the Lie algebra $\n^E$. The first possibility is $\mathfrak{h}_3^E \oplus \mathfrak{h}^E_3 \oplus E^2$ with $\h_3^E$ the $3$-dimensional Heisenberg Lie algebra over the field $E$, whereas the second possibility, denoted as $\mathfrak{g}$ in \cite{lw09-1}, is spanned by $X_1,\ldots, X_6, Z_1, Z_2$ with the Lie bracket defined by
	\begin{equation}
	\label{eq:defLieAlgebrag}
	    [X_1, X_2] = Z_1, \quad [X_1, X_3] = Z_2, \quad [X_4, X_5] = Z_1, \quad [X_4, X_6] = Z_2.
	\end{equation}
	We immediately state a result about possible extensions of the Lie algebra $\mathfrak{g}$.
	\begin{lemma}
	\label{lem:gCanNotBeIsoToQuotient}
	    Let $\n^E$ be a Lie algebra of nilpotency class at least 3 defined over some field $E$. Then $\g^E$ can not be isomorphic to $\n^E / \gamma_3(\n^E)$.
	\end{lemma}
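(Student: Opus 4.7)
The plan is to argue by contradiction. Suppose $\n^E$ has nilpotency class $c \geq 3$ and that there is an isomorphism $\n^E/\gamma_3(\n^E) \cong \g^E$. Choose lifts $\tilde{X}_1, \ldots, \tilde{X}_6, \tilde{Z}_1, \tilde{Z}_2 \in \n^E$ of the generators of $\g^E$. The goal is to show that $\gamma_3(\n^E) \subseteq \gamma_4(\n^E)$; since $\n^E$ is nilpotent this forces $\gamma_3(\n^E) = \gamma_4(\n^E) = 0$, which contradicts the hypothesis $c \geq 3$.

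The key structural feature of $\g^E$ that I would exploit is that each central generator admits two \emph{disjoint} bracket representations: $Z_1 = [X_1, X_2] = [X_4, X_5]$ and $Z_2 = [X_1, X_3] = [X_4, X_6]$. Moreover, all brackets between $\{X_1, X_2, X_3\}$ and $\{X_4, X_5, X_6\}$ vanish in $\g^E$. Translated to $\n^E$, this says that $[\tilde{X}_a, \tilde{X}_b] \in \gamma_3(\n^E)$ whenever $a \in \{1,2,3\}$ and $b \in \{4,5,6\}$ (and for several other index pairs as well).

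Since $[\n^E, \n^E]$ reduces modulo $\gamma_3(\n^E)$ to $\mathrm{span}(\tilde{Z}_1, \tilde{Z}_2)$, the space $\gamma_3(\n^E) = [\n^E, [\n^E, \n^E]]$ is spanned modulo $\gamma_4(\n^E)$ by the twelve brackets $[\tilde{X}_i, \tilde{Z}_k]$ for $1 \leq i \leq 6$ and $k \in \{1,2\}$. To show each of these lies in $\gamma_4(\n^E)$, I choose the lift representation of $\tilde{Z}_k$ disjoint from $\tilde{X}_i$. Concretely, for $i \in \{4,5,6\}$ I write $\tilde{Z}_k \equiv [\tilde{X}_1, \tilde{X}_{1+k}] \pmod{\gamma_3(\n^E)}$ and apply Jacobi:
\begin{equation*}
[\tilde{X}_i, [\tilde{X}_1, \tilde{X}_{1+k}]] = -[\tilde{X}_1, [\tilde{X}_{1+k}, \tilde{X}_i]] - [\tilde{X}_{1+k}, [\tilde{X}_i, \tilde{X}_1]].
\end{equation*}
Both inner brackets lie in $\gamma_3(\n^E)$ because $[X_{1+k}, X_i] = 0 = [X_i, X_1]$ in $\g^E$, so the outer brackets lie in $\gamma_4(\n^E)$. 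Symmetrically, for $i \in \{1,2,3\}$ I use $\tilde{Z}_k \equiv [\tilde{X}_4, \tilde{X}_{4+k}] \pmod{\gamma_3(\n^E)}$ and the same Jacobi identity kills the expression modulo $\gamma_4(\n^E)$.

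No step looks genuinely hard; the only care needed is to verify that the Jacobi computation only requires bracket relations that have already been arranged to vanish in $\g^E$, and to justify that replacing $[\tilde{X}_1, \tilde{X}_2]$ by $\tilde{Z}_1$ (etc.) inside an outer bracket introduces an error in $[\n^E, \gamma_3(\n^E)] \subseteq \gamma_4(\n^E)$, which is irrelevant modulo $\gamma_4(\n^E)$. Having established $\gamma_3(\n^E) \subseteq \gamma_4(\n^E)$, the contradiction with $c \geq 3$ completes the proof.
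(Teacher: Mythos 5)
Your proof is correct and takes essentially the same approach as the paper: both exploit the two disjoint bracket representations of each $Z_k$ together with the vanishing of cross-brackets between $\{X_1,X_2,X_3\}$ and $\{X_4,X_5,X_6\}$, and apply the Jacobi identity to push the triple brackets into $\gamma_4(\n^E)$. Your presentation is marginally tidier --- you pick the disjoint lift of $\tilde{Z}_k$ up front for each $i$, whereas the paper first reduces the general triple bracket $[X_i,[X_j,X_k]]$ to the case of indices split across the two halves and then applies Jacobi --- but the core reduction $\gamma_3(\n^E) \subseteq \gamma_4(\n^E)$ and the contradiction with nilpotency class $\geq 3$ are identical.
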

	\begin{proof}
	    Suppose that there is an isomorphism $\mathfrak{g}^E \approx \n^E/ \gamma_3(\n^E)$. Using this isomorphism we can identify $\g^E$ with a complementary subspace of $\gamma_3(\n^E)$ in $\n^E$. In this way we get $\n^E = \text{span}_E \{X_1,\ldots, X_6, Z_1, Z_2\} \oplus \gamma_3(\n^E)$. The relations from (\ref{eq:defLieAlgebrag}) then still hold modulo $\gamma_3(\n^E)$. 
	    
	    Now take any three-fold bracket $[X_i, [X_j, X_k]]$ with $1 \leq i,j,k \leq 6$, then we claim it must lie in $\gamma_4(\n^E)$. If $j \in \{1, 2, 3\}$ and $k \in \{4, 5, 6\}$ or the other way around, we have that $[X_j, X_k] \in \gamma_3(\n^E)$. By consequence $[X_i, [X_j, X_k]] \in \gamma_4(\n^E)$. So we can assume without loss of generality that both  $j,k \in \{1, 2, 3\}$. If $i \in \{1, 2, 3\}$ as well, there exists a $Y \in \gamma_3(\n^E)$ such that $[X_j, X_k] =  [X_{j + 3}, X_{k + 3}] + Y$. Thus we have $[X_i, [X_j, X_k]] = [X_i, Y] + [X_i, [X_{j+3}, X_{k + 3}]]$. Since $[X_i, Y]$ lies in $\gamma_4(\n^E)$ it now suffices to show that $[X_i, [X_{j+3}, X_{k + 3}]] \in \gamma_4(\n^E)$. So it suffices to show that $[X_i,[X_j,X_k]] \in \gamma_4(\n^E)$ for $i \in \{4, 5, 6\}$ and $j,k \in \{1, 2, 3\}$. From the Jacobi identity we then get
	    \begin{align*}
	        [X_i, [X_j, X_k]] = [X_j, \underbrace{[X_i, X_k]}_{\in \gamma_3(\n^E)}] + [X_k, \underbrace{[X_j, X_i]}_{\in \gamma_3(\n^E)}] \in \gamma_4(\n^E).
	    \end{align*}
	    This proves that any bracket of the form $[X_i,[X_j, X_k]]$ with $1 \leq i,j,k \leq 6$ lies in $\gamma_4(\n^E)$ and thus that $\gamma_3(\n^E) = \{0\}$. Since we assumed $\n^E$ to be at least 3-step nilpotent, this gives a contradiction.
	\end{proof}
	
	We apply this result to show the existence of positive gradings.

	\begin{prop}
	    There are no Anosov Lie algebras of type $(6, 2, 3)$ and an Anosov Lie algebra of type $(6, 2, 2)$ admits a positive grading.
	\end{prop}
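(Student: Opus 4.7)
My plan is to exploit the structure of the $2$-step quotient $\n^E/\gamma_3(\n^E)$, which is an Anosov Lie algebra of type $(6,2)$ under the induced automorphism. First I would reduce to the case where $\n^\Q$ is indecomposable. By Theorems \ref{thm:directSumAnosov} and \ref{thm:directSumExp}, any non-trivial abelian factor $\mathfrak{a}$ of $\n^\Q$ of dimension $k$ leaves a non-abelian Anosov complement of type $(6-k,2,n_3)$ with $6-k\geq 3$, so $k\in\{2,3\}$; the possible complements $(3,2,2)$, $(3,2,3)$ and $(4,2,3)$ are all forbidden (the first two by the divisibility result of Proposition \ref{prop:n1PrimeFullRankThenDividesni}, the last by Corollary \ref{cor:first4resteven}), while $(4,2,2)$ admits a positive grading by Proposition \ref{prop:Anosovn1is4ThenPosGraded} that lifts to the whole algebra. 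Decompositions into several non-abelian summands are ruled out by the constraint $n_2\geq 2$ in each summand.

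Next, by the classification in \cite{lw09-1} combined with Lemma \ref{lem:gCanNotBeIsoToQuotient} (which rules out $\g^E$ as a quotient because $\n^E$ has nilpotency class $3$), we get $\n^E/\gamma_3(\n^E) \cong \h_3^E\oplus\h_3^E\oplus E^2$. After replacing $A$ by a suitable power so that the induced automorphism stabilises each indecomposable summand, I would choose eigenvector bases $X_1,\ldots,X_6$ of $\n_1^E$ and $Y_1,Y_2$ of $\n_2^E$ satisfying $[X_1,X_2]\equiv Y_1$, $[X_3,X_4]\equiv Y_2$ and $[X_i,X_j]\equiv 0$ modulo $\gamma_3(\n^E)$ for all other pairs. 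The Jacobi identity together with $[\n_1^E,\gamma_3(\n^E)]\subseteq\gamma_4(\n^E)=0$ then forces $[X_i,Y_1]=0$ for $i\notin\{1,2\}$ and $[X_j,Y_2]=0$ for $j\notin\{3,4\}$. Hence $\n_3^E=[\n_1^E,\n_2^E]$ is spanned by the four eigenvectors $[X_1,Y_1]$, $[X_2,Y_1]$, $[X_3,Y_2]$, $[X_4,Y_2]$ with eigenvalues $\lambda_1^2\lambda_2$, $\lambda_1\lambda_2^2$, $\lambda_3^2\lambda_4$, $\lambda_3\lambda_4^2$.

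For type $(6,2,3)$, I would then argue that the Anosov polynomial $f_3$ has degree $3$, and since hyperbolicity forbids rational roots $f_3$ must be irreducible. Cauchy's theorem applied to its Galois group produces an element $\rho\in\gal(E,\Q)$ of order $3$. Since $\rho$ also permutes the $2$-element set $\{\mu_1=\lambda_1\lambda_2,\mu_2=\lambda_3\lambda_4\}$ of $A_2$-eigenvalues and no non-identity element of $S_2$ has order $3$, $\rho$ fixes both $\mu_i$. The constraint $\rho(\lambda_1)\rho(\lambda_2)=\mu_1$, together with the observation that any rival pair among $\lambda_1,\ldots,\lambda_6$ with product $\mu_1$ would force either $\mu_1=\mu_2$ or an eigenvalue equal to $\pm 1$ (both excluded by hyperbolicity), shows that $\rho$ preserves each of $\{\lambda_1,\lambda_2\}$, $\{\lambda_3,\lambda_4\}$ and $\{\lambda_5,\lambda_6\}$; since $\rho$ restricted to each pair has order dividing $2$, we conclude $\rho=\mathrm{id}$, a contradiction.

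For type $(6,2,2)$, the target is $[\n_1^E,\n_1^E]\subseteq\n_2^E$, since then $\n_1^E\oplus\n_2^E\oplus\n_3^E$ is a positive grading which descends to $\n^\Q$. The only obstruction is a non-zero $\gamma_3$-component in some $[X_i,X_j]$ with $(i,j)\notin\{(1,2),(3,4)\}$, and this can occur only when its eigenvalue $\lambda_i\lambda_j$ coincides with one of $\lambda_1^2\lambda_2$, $\lambda_1\lambda_2^2$, $\lambda_3^2\lambda_4$, $\lambda_3\lambda_4^2$. Each such coincidence forces a specific multiplicative relation among the $\lambda_i$ and permits a basis change of the form $X_i\mapsto X_i - aY_l$---the coincidence ensures the replacement vector remains an $A$-eigenvector---to kill the offending bracket without disturbing the other established relations, exactly as in the $(4,2,2)$ argument. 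The main obstacle is the exhaustive case analysis here: one must enumerate all the possible eigenvalue coincidences, in each case identify the correct basis change, and verify that the successive modifications are mutually compatible.
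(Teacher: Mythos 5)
Your reduction to the quotient $\n^E/\gamma_3(\n^E)\cong\h_3^E\oplus\h_3^E\oplus E^2$, the use of Lemma~\ref{lem:gCanNotBeIsoToQuotient}, and the determination of $\n_3^E$ as spanned by $[X_1,Y_1],[X_2,Y_1],[X_3,Y_2],[X_4,Y_2]$ via the Jacobi identity follow the paper's proof exactly. The opening reduction to the indecomposable case is extraneous: the paper's argument works directly and never invokes indecomposability.

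Where you genuinely diverge is in ruling out type $(6,2,3)$. The paper simply observes that every candidate eigenvalue of $A_3$ is a monomial in the roots $\lambda_1,\ldots,\lambda_4$ of the degree-$4$ Anosov polynomial $g$ in the factorisation $f_1 = gh$ coming from \cite{lw09-1}, and then applies Proposition~\ref{prop:AnosovPolyDeg4} to conclude that $n_3$ is even. Your direct Galois argument is a sensible alternative, but it has a real gap: the claim that $\rho$ must preserve each pair $\{\lambda_1,\lambda_2\}$, $\{\lambda_3,\lambda_4\}$, $\{\lambda_5,\lambda_6\}$ does not follow from $\rho(\lambda_1)\rho(\lambda_2)=\mu_1$ plus hyperbolicity alone. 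A ``rival pair'' such as $\{\rho(\lambda_1),\rho(\lambda_2)\}=\{\lambda_1,\lambda_5\}$ would merely force $\lambda_5=\lambda_2$, a repeated root, which hyperbolicity does not forbid; and pairs straddling the two blocks are not excluded by the dichotomy ``$\mu_1=\mu_2$ or $\pm 1$'' that you invoke. What actually makes the argument go through is that $g$ and $h$ are polynomials with \emph{rational} coefficients (this is part of what \cite{lw09-1} provides), so any $\rho\in\gal(E,\Q)$ preserves the sets of roots $\{\lambda_1,\ldots,\lambda_4\}$ and $\{\lambda_5,\lambda_6\}$ separately; then an order-$3$ element acting on a $4$-element set is a $3$-cycle, and the constraints $\rho(\lambda_1\lambda_2)=\lambda_1\lambda_2$, $\rho(\lambda_3\lambda_4)=\lambda_3\lambda_4$ force a repeated root, which one must then separately argue is impossible in this configuration. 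That additional input is missing from your write-up.

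For type $(6,2,2)$ you stop short of the argument. The paper first pins down which two of the four candidates $\lambda_1^2\lambda_2,\lambda_1\lambda_2^2,\lambda_3^2\lambda_4,\lambda_3\lambda_4^2$ are the actual eigenvalues of $A_3$: if both $\lambda_1^2\lambda_2$ and $\lambda_1\lambda_2^2$ occurred, then the integer-like condition on $A_3$ would give $(\lambda_1\lambda_2)^3 = \pm 1$, contradicting hyperbolicity of $A_2$, and similarly for the other pair. One is thus left (after relabelling) with $\lambda_1^2\lambda_2$ and $\lambda_3^2\lambda_4$, whose product being $\pm 1$ combined with $\lambda_1\lambda_2\lambda_3\lambda_4 = \pm 1$ yields $\lambda_1\lambda_3 = 1$ and $\lambda_2\lambda_4 = 1$. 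These relations are what make the $(4,2,2)$-style basis changes tractable and mutually compatible. Without them, your planned ``exhaustive case analysis'' has no starting point: you do not yet know the eigenvalues of $A_3$, so you cannot identify which eigenvalue coincidences can occur nor which replacements $X_i\mapsto X_i - aY_l$ are available. This step needs to be carried out, not deferred.
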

	\begin{proof}
	    Let $\n^\Q$ be an Anosov automorphism of type $(6, 2, n_3)$ and let $A:\n^\Q \to \n^\Q$ be the semi-simple Anosov automorphism. As usual we get the decomposition $\n^\Q = \n_1 \oplus \n_2 \oplus \n_3$ such that $A( \n_i )= \n_i$ and we write $A_i = A|_{\n_i}$. The Anosov automorphism $A$ descends to one on $\n^\Q/\gamma_3(\n^\Q)$ which we call $\tilde{A}$. The Lie algebra $\n^\Q/\gamma_3(\n^\Q)$ is thus also an Anosov Lie algebra of type $(6, 2)$. We use the natural identification of vector spaces $\n^\Q/\gamma_3(\n^\Q) \approx \n_1^\Q \oplus \n_2^\Q$ and see that under this identification $\tilde{A}$ on $\n^\Q/\gamma_3(\n^\Q)$ corresponds to $A_1 \oplus A_2$ on $\n_1 \oplus \n_2$. Denote by $E$ the splitting field of the characteristic polynomial of $A$. In \cite{lw09-1} it is proven that the Lie algebra $\n^E/\gamma_3(\n^E)$ is isomorphic to either $\g^E$ or $\h_3^E \oplus \h_3^E \oplus E^2$. From Lemma \ref{lem:gCanNotBeIsoToQuotient} we get that $\n^E/\gamma_3(\n^E)$ must be isomorphic to $\h_3^E \oplus \h_3^E \oplus E^2$, moreover by following the proof, we get a basis of eigenvectors $X_1, \ldots,X_4, Y_1, Y_2$ of $A_1$ and a basis of eigenvectors $Z_1, Z_2$ of $A_2$ such that the following relations hold in $\n^E$:
	    \begin{align}
	        &\label{eq:bracketRelations1} [X_1, X_2] = Z_1 \mod \gamma_3(\n^E)\\        &\label{eq:bracketRelations2} [X_3, X_4] = Z_2 \mod \gamma_3(\n^E)\\
	        &[X_1, X_3], [X_1, X_4], [X_2, X_3], [X_2, X_4] \in \gamma_3(\n^E)\\
	        &[\R Y_i, \n_1 \oplus \n_2] \subset \gamma_3(\n^E) \quad \quad \forall \quad i \in \{1,2\}.
	    \end{align}
	    Let $\lambda_1, \ldots, \lambda_4, \mu_1, \mu_2$ be the eigenvalues of $X_1, \ldots, X_4, Y_1, Y_2$, respectively. We know by the proof in \cite{lw09-1} as well that the characteristic polynomial $f_1$ of $A_1$ factors as $f_1 = gh$ with $\lambda_1, \ldots, \lambda_4$ the roots of $g$ and $\mu_1, \mu_2$ the roots of $h$. Now, we use the fact that $[\n^E, \gamma_3(\n^E)] = 0$ and the Jacobi identity to find
	    \begin{alignat*}{2}
	        [Y_i, Z_j] &= [Y_i, [X_{2j - 1}, X_{2j}]] = [X_{2j - 1}, \underbrace{[Y_i, X_{2j}]}_{\in \gamma_3(n^E)}] + [X_{2j}, \underbrace{[X_{2j - 1}, Y_i]}_{\in \gamma_3(n^E)}] = 0 \quad \quad &&\forall \quad i,j \in \{1, 2\}\\
	        [X_i, Z_2] &= [X_i, [X_3, X_4]] = [X_3, \underbrace{[X_i, X_4]}_{\in \gamma_3(n^E)}] + [X_4, \underbrace{[X_3, X_i]}_{\in \gamma_3(n^E)}] = 0 \quad \quad &&\forall \quad i \in \{1, 2\}\\
	        [X_i, Z_1] &= [X_i, [X_1, X_2]] = [X_1, \underbrace{[X_i, X_2]}_{\in \gamma_3(n^E)}] + [X_2, \underbrace{[X_1, X_i]}_{\in \gamma_3(n^E)}] = 0 \quad \quad &&\forall \quad i \in \{3, 4\}.
	    \end{alignat*}
	    Since the brackets above vanish, it follows that $\n_3$ is spanned by the vectors $[X_1, Z_1]$, $[X_2, Z_1]$, $[X_3, Z_2]$ and $[X_4, Z_2]$. This implies that the eigenvalues of $A_3$ lie in the set $\{ \lambda_1^2 \lambda_2, \lambda_1\lambda_2^2, \lambda_3^2 \lambda_4, \lambda_3 \lambda_4^2 \}$. Applying Proposition \ref{prop:AnosovPolyDeg4} on the Anosov polynomial $g$ and the eigenvalues of $A_3$ we see that $n_3$ must be even. By consequence a Lie algebra of type $(6, 2, 3)$ can not be Anosov. We can thus from here assume that $n_3 = 2$. 
	    
	    From (\ref{eq:bracketRelations1}) and (\ref{eq:bracketRelations2}) it follows that $Z_1$ has eigenvalue $\lambda_1 \lambda_2$ and $Z_2$ has eigenvalue $\lambda_3 \lambda_4$. If $\lambda_1^2 \lambda_2$, $\lambda_1 \lambda_2^2$ or $\lambda_3^2 \lambda_4$, $\lambda_3 \lambda_4^2$ are the eigenvalues of $A_3$ we get that $(\lambda_1 \lambda_2)^3 = 1$ or $(\lambda_3 \lambda_4)^3 = 1$ respectively. This contradicts the fact that the eigenvalues of $A_2$ can not have absolute value equal to 1. By consequence we have without loss of generality that the eigenvalues of $A_3$ are $\lambda_1^2 \lambda_2$ and $\lambda_3^2 \lambda_4$. This shows that $\lambda_1 \lambda_3 = 1$ and thus also that $\lambda_2 \lambda_4 = 1$. From here on we can argue in similarly as in the proof of type $(4, 2, 2)$ in Proposition \ref{prop:Anosovn1is4ThenPosGraded} and get that the subalgebra $\text{span}_E\{X_1, \ldots, X_4\} \oplus \n_2^E \oplus \n_3^E \subset \n^E$ has a positive grading given by $V_1 \oplus \n_2^E \oplus \n_3^E$ for some vector subspace $V_1 \subset \text{span}_E\{X_1, \ldots, X_4\} \oplus \n_2^E$. It is then straightforward to check that $W_1 = V_1$, $W_2 = \n_2^E \oplus \text{span}_E\{ Y_1, Y_2 \}$, $W_3 = \n_3^E$ defines a positive grading for $\n^E$.
	\end{proof}
	
	This leaves only one possible type, which we check via the action of the Galois group on the eigenvalues.

	\begin{prop}
	    Let $\n^\Q$ be an Anosov Lie algebra of type $(6, 3, 2)$, then $\n^\Q$ admits a positive grading.
	\end{prop}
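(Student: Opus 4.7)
My plan is to show that the natural decomposition $\n^\Q = \n_1 \oplus \n_2 \oplus \n_3$ is itself a positive grading. By Proposition~\ref{prop:CnotTooHighThenGraded}, since the nilpotency class is $c = 3$, it suffices to establish $d_A \geq 2$, which is equivalent to ruling out brackets $[X_i, X_j]$ of $A_1$-eigenvectors in $\n_1^E$ with a non-zero $\n_3^E$-component, i.e.\ ruling out $\lambda_i \lambda_j \in \{\nu, \nu^{-1}\}$ for any pair $(i,j)$.

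Preliminary structural observations come first. Since Anosov polynomials admit no linear factors over $\Q$ (which would produce eigenvalues $\pm 1$, contradicting hyperbolicity) and the irreducible factors of an Anosov polynomial are themselves Anosov by Lemma~\ref{lem:blockDiagHypIntLike}, the polynomial $f_2$ is irreducible of degree $3$ and $f_3$ irreducible of degree $2$. Proposition~\ref{prop:primeDegFullRank} then provides the full rank condition for both. Writing $\mu_1, \mu_2, \mu_3$ for the roots of $f_2$ and $\nu, \nu^{-1}$ for those of $f_3$, Lemma~\ref{lem:IrrPrimeDegreeCyclicPerm} supplies $\sigma \in \gal(E, \Q)$ of order $3$ acting as $(\mu_1\ \mu_2\ \mu_3)$; because $\sigma^3 = \Id$ and $\sigma$ permutes the two-element set $\{\nu, \nu^{-1}\}$, $\sigma$ must fix both. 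Moreover, since $\n_3^E = [\n_1^E, \n_2^E]$, every eigenvalue of $A_3$ has the form $\lambda_i \mu_j$ for some indices $i, j$.

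The central step is a case analysis of the permutation induced by $\sigma$ on $\{\lambda_1, \ldots, \lambda_6\}$, which is a product of fixed points and $3$-cycles and cannot be trivial (else $\sigma$ would fix every $\mu_k$). Either $\sigma$ is a product of two disjoint $3$-cycles (Case A) or $\sigma$ has three fixed points and one $3$-cycle (Case B). In Case A, if $\nu = \lambda_a \lambda_b$ for some pair, then the three pairs in the $\sigma$-orbit of $\{a,b\}$ all give the same product $\nu$; multiplying them yields either $\nu^3 = \det(A_1) = 1$ (forcing $\nu$ to be a root of unity, contradiction) or a collapse of $\lambda$-values under $\sigma$ that in turn makes $\sigma$ fix some $\mu_k$ (also a contradiction). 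In Case B, let $F$ denote the fixed field of $\sigma$; then $\nu \in F$ and $\mu_j \notin F$, so if $\nu = \lambda_i \mu_j$ we must have $\lambda_i \notin F$, forcing $i$ into the $3$-cycle. Enumerating the orbit structures of $\sigma$ on unordered pairs gives finitely many possibilities for $\{\mu_1, \mu_2, \mu_3\}$; for each, the rigidity $\sigma(\lambda_i)/\lambda_i = \mu_j/\mu_{\sigma(j)}$ together with $\prod_k \lambda_k = 1$ forces some ratio $\lambda_k/\lambda_{k'}$ to be a root of unity, making $\nu$ itself a root of unity — a contradiction.

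After every sub-case is eliminated, $\lambda_i \lambda_j \notin \{\nu, \nu^{-1}\}$ for all $i, j$, whence $[\n_1^E, \n_1^E] \subset \n_2^E$ and the decomposition $\n_1 \oplus \n_2 \oplus \n_3$ is a positive grading of $\n^\Q$. The main obstacle is the sub-case enumeration in Case B, where the four size-$3$ orbits of $\sigma$ on unordered pairs and the three fixed pairs must each be checked individually against the form $\nu = \lambda_i \mu_j$ with $i$ ranging over the $3$-cycle; each is ruled out by combining the full rank of $f_2$, the identity $\prod_i \lambda_i = 1$, and the hyperbolicity of $A$.
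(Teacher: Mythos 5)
Your overall strategy — produce a Galois element $\sigma$ of order $3$ and split on whether it acts as one or two $3$-cycles on the roots of $f_1$ — matches the paper's approach, and the goal of showing $[\n_1,\n_1]\subset\n_2$ is the right one. But there are several gaps.

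First, a minor point of framing: showing $d_A\geq 2$ is \emph{not} equivalent to ruling out a $\n_3$-component in $[\n_1,\n_1]$. The integer $d_A$ is determined purely by the multiplicative relations among the eigenvalues of $A_1$, independently of the Lie bracket; it is a sufficient condition via Proposition~\ref{prop:CnotTooHighThenGraded}, not a necessary one. What you actually want (and attempt) is the weaker statement $[\n_1^E,\n_1^E]\subset\n_2^E$, so you should say that directly.

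Second, Lemma~\ref{lem:IrrPrimeDegreeCyclicPerm} applied to $f_2$ gives an element of $\gal(\Q(f_2),\Q)$, not of $\gal(E,\Q)$. A lift of it to $\gal(E,\Q)$ exists, but its order need only be a multiple of $3$; you must argue that a suitable power of the lift still has order $3$ \emph{and} restricts to a $3$-cycle on $\{\mu_1,\mu_2,\mu_3\}$. This can be saved (since any permutation of the six $\lambda_i$ has order at most $6$, the lift has order $3$ or $6$), but it is not automatic from the lemma and the argument as written doesn't supply it. The paper avoids this issue entirely by taking an arbitrary order-$3$ element of $\gal(E,\Q)$ and making no claim about its action on the $\mu_j$'s.

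Third, your Case~A dichotomy is imprecise. If $\nu=\lambda_a\lambda_b$ with $a,b$ in the same $3$-cycle, multiplying the three orbit products gives $\nu^3=(\lambda_a\lambda_b\lambda_c)^2$, which is \emph{not} $\det(A_1)=1$. The correct contradiction is either (i) $\sigma$ does not fix $\lambda_a\lambda_b$, so the orbit of that eigenvalue under $\sigma$ has $3$ distinct elements, contradicting $n_3=2$; or (ii) $\sigma$ fixes $\lambda_a\lambda_b$, forcing $\lambda_a=\lambda_b=\lambda_c$, contradicting separability of $f_1$. ``Makes $\sigma$ fix some $\mu_k$'' is not by itself a contradiction without the (unestablished) $3$-cycle claim on the $\mu$'s.

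Fourth — and this is the most serious gap — Case~B is far too compressed. The claim that the rigidity $\sigma(\lambda_i)/\lambda_i=\mu_j/\mu_{\sigma(j)}$ combined with $\prod_k\lambda_k=1$ ``forces some ratio $\lambda_k/\lambda_{k'}$ to be a root of unity, making $\nu$ a root of unity'' is asserted rather than proved, and the implication from a root-of-unity ratio $\lambda_k/\lambda_{k'}$ to $\nu$ being a root of unity does not follow. The paper's Case~2 uses a genuinely different and essential step that your sketch omits: it first shows that no $\lambda_i\lambda_j$ with $i\in\{1,2,3\}$ is an eigenvalue of $A_3$, then observes that if additionally no such product were an eigenvalue of $A_2$, the span of $X_1,X_2,X_3$ would be an abelian factor, forcing a nonexistent type $(3,3,2)$ — and only then does the orbit-counting against $n_3=2$ close out the two remaining subcases. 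Without the abelian-factor step you have no control over which $2$-fold products the $\mu_j$'s actually are, and the ``finitely many possibilities'' you claim to enumerate have not been pinned down. As it stands, Case~B does not constitute a proof.
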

	\begin{proof}
	    Let $\lambda_1, \ldots, \lambda_6$ denote the eigenvalues of $A_1$ with corresponding eigenvectors $X_1, \ldots, X_6$ and $\mu_1, \mu_2, \mu_3$ the eigenvalues of $A_2$ with corresponding eigenvectors $Y_1, Y_2, Y_3$. As usual, we write $E$ for the splitting field of the characteristic polynomial $f_1$ of $A_1$. Note that $f_2$, the characteristic polynomial of $A_2$, is irreducible of degree 3 since otherwise it would have a rational root. By applying the orbit stabilizer theorem to the action of $\gal(E, \Q)$ on $\mu_1$, it follows that $3$ divides the order of $\gal(E, \Q)$. By consequence there is an element $\sigma \in \gal(E, \Q)$ of order 3. The field automorphism $\sigma$ is completely determined by its action on the roots of $f_1$. Without loss of generality, there are two possibilities for $\sigma$, where we use the notation introduced in Section \ref{sec:prelimGalois}.

	    %Note that if two of the roots $\lambda_1, \ldots, \lambda_6$ are equal, $f_1$ must have an irreducible factor $g$ of multiplicity two, i.e. $f = g^2 h$ for some rational polynomial $h$. If $g$ has degree 2 then so does $h$ and we get a contradiction with the fact that $\sigma$ has order 3 and must permute the roots of each factor separately. Thus $g$ must have degree 3 and $h$ must be trivial. We get without loss of generality $\lambda_1 = \lambda_4, \lambda_2 = \lambda_5$ and $\lambda_3 = \lambda_6$ where $\lambda_1, \lambda_2, \lambda_3$ are distinct. Whether these last equalities hold or the roots of $f_1$ are all distinct, without loss of generality there are two ways in which $\sigma$ can act on them. We consider each case separately.
	    \begin{itemize}
	        \item \underline{$\sigma = (1 \, 2\, 3)(4\, 5\, 6)$} We first show that $A_3$ has no eigenvalue of the form $\lambda_i \lambda_j$ with $i \neq j$ and either $i,j \in \{1, 2, 3\}$ or $i,j \in \{4, 5, 6 \}$. Without loss of generality we can take $i = 1$, $j = 2$. We must have that $\lambda_1 \lambda_2$ is fixed under $\sigma$ since otherwise $A_3$ has 3 distinct eigenvalues which is in contradiction with $n_3 = 2$. Thus it follows that $\lambda_1 \lambda_2 = \lambda_2 \lambda_3 = \lambda_3 \lambda_1$ or with other words $\lambda_1 = \lambda_2 = \lambda_3$. This implies that $f$ is a product of 3 irreducible polynomials of degree 2 which is in contradiction with the way $\sigma$ acts on $\{\lambda_i\}_i$. This shows $A_3$ can not have an eigenvalue of this form. 
	        
	        Next we show that $A_3$ has no eigenvalue of the form $\lambda_i \lambda_j$ with $i \in \{ 1, 2, 3 \}$ and $j \in \{ 4, 5, 6 \}$. Without loss of generality we can assume $i = 1$ and $j = 4$. Again we must have that $\lambda_1 \lambda_4$ is fixed under $\sigma$ since $n_3 = 2$. Thus we get $\lambda_1 \lambda_4 = \lambda_2 \lambda_5 = \lambda_3 \lambda_6$. Therefore $1 = \lambda_1 \lambda_2 \lambda_3 \lambda_4 \lambda_5 \lambda_6 = (\lambda_1 \lambda_4)^3$ and thus $\lambda_1 \lambda_4$ has norm 1. It can therefore not be an eigenvalue of $A_3$. 
	        
	        We have shown above that $A_3$ can not have any eigenvalue of the form $\lambda_i \lambda_j$ with $1 \leq i < j \leq 6$. By consequence we have $[\n_1^E, \n_1^E] \subset \n_2^E$ and thus $\n_1 \oplus \n_2 \oplus \n_3$ is a positive grading for $\n^\Q$.
	    
	        \item \underline{$\sigma = (1\, 2\, 3)(4)(5)(6)$}. By a similar argument as in the previous case, it holds that $\lambda_i \lambda_j$ with $i \neq j$, $i \in \{1,2,3\}$ and $j \in \{1, 2, 3, 4, 5, 6\}$, is not an eigenvalue of $A_3$. We thus know that $A_2$ must have an eigenvalue of this form, otherwise $\{X_1, X_2, X_3\}$ spans an abelian factor of $\n^E$ and thus $\n^\Q$ would be a direct sum of the 3-dimensional abelian Lie algebra and an Anosov Lie algebra of type $(3, 3, 2)$ which does not exist. So $A_2$ has an eigenvalue of the form $\lambda_i \lambda_j$ with eigenvector $[X_i, X_j]$ where $i \in \{1, 2, 3\}$. Without loss of generality we can assume this eigenvalue is $\lambda_1 \lambda_2$ or $\lambda_1 \lambda_4$. We treat each case separately. 
	        
	        In the first case, as we argued before, $\sigma$ can not fix $\lambda_1 \lambda_2$ and thus the other eigenvalues of $A_2$ are given by $\lambda_2 \lambda_3$ and $\lambda_3 \lambda_1$. This shows as well that $(\lambda_1 \lambda_2 \lambda_3)^2 = 1$. Without loss of generality we can assume $A_3$ has an eigenvalue of the form $\lambda_1 \lambda_2 \lambda_i$. If $i \in \{4, 5, 6\}$, then $\sigma$ does not fix $\lambda_1 \lambda_2 \lambda_i$ which is in contradiction with $n_3 = 2$. So we must have $i \in \{1, 2, 3\}$. If $i = 1$ or $i = 2$ we get, since $\sigma$ must fix $\lambda_1 \lambda_2 \lambda_i$, that $\lambda_1^2 \lambda_2 = \lambda_2^2 \lambda_3 = \lambda_3^2 \lambda_1$ or that $\lambda_1 \lambda_2^2 = \lambda_2 \lambda_3^2 = \lambda_3 \lambda_1^2$, respectively. In either case we can derive that $\lambda_1^2 = \lambda_1 \lambda_2$, $\lambda_2^2 = \lambda_1 \lambda_3$ and $\lambda_3^2 = \lambda_1 \lambda_2$. Therefore we have
	        \begin{equation*}
	            \lambda_1^8 = \lambda_2^4 \lambda_3^4 = (\lambda_1 \lambda_3)^2 (\lambda_1 \lambda_2)^2 = (\lambda_1 \lambda_2 \lambda_3)^2 \lambda_1^2 = \lambda_1^2.
	        \end{equation*}
	        By consequence $\lambda_1^6 = 1$, which is in contradiction with the fact that $\lambda_6$ has absolute value different from 1. We thus conclude that only $\lambda_1 \lambda_2 \lambda_3$ can be an eigenvalue of $A_3$ which contradicts the fact that $n_3 = 2$. 
	        
	        Now consider the second case where $\lambda_1 \lambda_4$ is an eigenvalue of $A_2$. The other eigenvalues of $A_2$ are then $\lambda_2 \lambda_4$ and $\lambda_3 \lambda_4$. Without loss of generality $A_3$ must have an eigenvalue of the form $\lambda_1 \lambda_4 \lambda_i$. If $i \in \{ 4, 5, 6 \}$ it follows that its orbit under $\sigma$ has three distinct elements: $\lambda_1 \lambda_4 \lambda_i, \lambda_2 \lambda_4 \lambda_i$ and $\lambda_3 \lambda_4 \lambda_i$, which can not happen since $n_3 = 2$. So we must have that $i \in \{1, 2, 3\}$, but then again its orbit under sigma counts three distinct elements contradicting $n_3 = 2$. 
	        
	        Since both cases lead to contradictions, we conclude that the case $\sigma =  (1\, 2\, 3)(4)(5)(6)$ does not occur at all, which finishes the proof.
	    \end{itemize}

	\end{proof}

	\subsection{Case $n_1 = 7$}
	As a first lemma, we show how reducibility of the polynomial $f_1$ sometimes gives us information about the type.
	\begin{lemma}
	    \label{lem:productOfPolyWithOnePrime}
	    Let $\n^\Q$ be a rational nilpotent Lie algebra with Anosov automorphism $A:\n^\Q \to \n^\Q$ and corresponding decomposition $\n^\Q = \n_1 \oplus \ldots \oplus \n_c$. Let $f_1$ be the characteristic polynomial of $A_1$ and assume its factorisation in irreducible polynomials is given by $f_1 = g \cdot h_1, \ldots, h_k$ where $\deg g = p$ is prime. We write $E_i$ for the splitting field of the polynomial $h_i$ over $\Q$. If $\n^\Q$ has no non-trivial abelian factors and $p$ does not divide the order of $\gal(E_i, \Q)$ for all $1 \leq i \leq k$, then there exists an index $2 \leq j \leq c$ such that $n_j \geq p$.
	\end{lemma}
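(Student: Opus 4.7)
The plan is to argue by contradiction: if $n_j < p$ for every $2 \leq j \leq c$, then the eigenvectors of $A_1$ corresponding to the roots of $g$ will span a non-trivial abelian factor of $\n^\Q$, contradicting the hypothesis.

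First I would use Cauchy's theorem on $\gal(E, \Q)$, where $E$ is the splitting field of $f_1$, to produce an element $\sigma \in \gal(E, \Q)$ of order $p$; here $p$ divides $|\gal(E, \Q)|$ because it already divides $|\gal(\Q(g), \Q)|$ by transitivity on the $p$ roots of $g$. For each index $i$, the restriction $\sigma|_{E_i} \in \gal(E_i, \Q)$ has order dividing both $p$ and $|\gal(E_i, \Q)|$, so the hypothesis forces $\sigma|_{E_i} = \mathrm{id}$, i.e.~$\sigma$ fixes every root of every $h_i$. Moreover $\sigma|_{\Q(g)}$ cannot be trivial (else $\sigma$ would fix the compositum $\Q(g) \cdot E_1 \cdots E_k = E$ pointwise, contradicting $|\sigma|=p$), so by the argument of Lemma \ref{lem:IrrPrimeDegreeCyclicPerm} I may reorder the roots of $g$ so that $\sigma(\lambda_i) = \lambda_{i+1}$ with indices mod $p$.

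Now assume for contradiction that $n_j < p$ for all $j \geq 2$. For every eigenvalue $\mu$ of $A_j$ with $j \geq 2$, the $\langle \sigma \rangle$-orbit of $\mu$ has size dividing $p$ and bounded by $n_j < p$, hence equals $1$: every such $\mu$ is $\sigma$-fixed. Choose eigenvectors $X_1, \ldots, X_p \in \n_1^E$ of $A_1$ for $\lambda_1, \ldots, \lambda_p$. I claim $X_1 \in Z(\n^E)$. For any eigenvector $Y$ of $A$ with eigenvalue $\mu$, the bracket $[X_1, Y]$ is either zero or an eigenvector of $A$ with eigenvalue $\lambda_1 \mu$ lying in $\gamma_2(\n^E)$, in which case $\lambda_1 \mu$ must be $\sigma$-fixed. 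If $\mu$ is $\sigma$-fixed (namely $Y$ corresponds to a root of some $h_i$, or $Y$ lies in $\n_j^E$ for some $j \geq 2$), then $\sigma(\lambda_1 \mu) = \lambda_2 \mu$ forces $\lambda_1 = \lambda_2$, absurd. If $\mu = \lambda_{i'}$ with $i' \neq 1$, $\sigma$-fixedness of $\lambda_1 \lambda_{i'}$ becomes $\lambda_1^{-1} \lambda_2 \lambda_{i'}^{-1} \lambda_{i'+1} = 1$, whose exponent vector in $\Z^p$ has entries in $\{0, \pm 1\}$ and, by a short case check in $i'$ (including the wrap-arounds $i' = 2$ and $i' = p$), is never a scalar multiple of $(1, 1, \ldots, 1)$. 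This contradicts the full rank condition for $g$ supplied by Proposition \ref{prop:primeDegFullRank}. Hence $[X_1, Y] = 0$ in every case and $X_1 \in Z(\n^E)$.

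Since $\sigma$ is a $\Q$-linear Lie algebra automorphism of $\n^E$ preserving $Z(\n^E)$, and $\sigma(X_1)$ is an eigenvector of $A_1$ for $\lambda_2$ and hence a scalar multiple of $X_2$, we deduce $X_2 \in Z(\n^E)$; iterating, every $X_i$ lies in $Z(\n^E)$. Thus $V^E := \text{span}_E\{X_1, \ldots, X_p\}$ is contained in $Z(\n^E)$ and satisfies $V^E \cap \gamma_2(\n^E) = 0$ (since $V^E \subset \n_1^E$), so by the characterization of abelian factors recalled in Section \ref{sec:decom} it is an abelian factor of $\n^E$. As $V^E$ is $\gal(E, \Q)$-stable (the Galois group permutes the $\lambda_i$ among themselves, hence the $X_i$ up to scalars in $E^\times$), taking $\gal$-invariants produces a nonzero $\Q$-subspace $V^\Q \subset \n^\Q$ which is an abelian factor of $\n^\Q$, contradicting the hypothesis. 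The main technical obstacle is the full-rank verification in the last sub-case: confirming in every choice of $i'$ that the $\{0,\pm 1\}$-valued exponent vector obtained above is not a $\Z$-multiple of $(1,1,\ldots,1)$, so that it yields a genuine new multiplicative relation among the $\lambda_i$.
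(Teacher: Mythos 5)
Your proof is correct and rests on the same two pillars as the paper's argument: producing the order-$p$ element $\sigma \in \gal(E,\Q)$ that cycles the roots of $g$ and fixes the roots of the $h_i$'s, and then using the full rank condition of $g$ (Proposition~\ref{prop:primeDegFullRank}) to rule out $\sigma$-fixedness of $\lambda_1 \lambda_m$ or $\lambda_1 \mu_m$. You frame it as a contradiction (if all $n_j < p$ then every eigenvalue of $A_j$ for $j\geq 2$ is $\sigma$-fixed, forcing $X_1$ central and yielding an abelian factor), whereas the paper argues directly that a nonzero bracket's eigenvalue has a $\sigma$-orbit of size exactly $p$, hence degree of the minimal polynomial at least $p$; these are logically equivalent formulations of the same idea, and your extra steps (considering $Y$ in $\n_j^E$ for $j\geq 2$, and descending $V^E$ to $\Q$ by Galois invariants rather than quoting the field-independence of $m(\cdot)$) are harmless overhead.
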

	\begin{proof}
	    Let $\lambda_1, \ldots, \lambda_p$ be the roots of $g$ with corresponding eigenvectors $X_1, \ldots, X_p \in \n_1^E$ and $\mu_1, \ldots, \mu_{n_1 - p}$ the roots of $h_1\cdot \ldots \cdot h_k$ with eigenvectors $Y_1, \ldots, Y_{n_1-p}$. We denote by $E$ the splitting field of $f_1$. Since the Galois group $\gal(E, \Q)$ acts transitively on the roots of $g$, it follows by the orbit stabilizer theorem that $p$ divides the order of $\gal(E, \Q)$. By consequence there exists an element $\sigma \in \gal(E, \Q)$ of order $p$. By restricting $\sigma$ to the subfield $E_i$, we get an automorphism $\sigma|_{E_i} \in \gal(E_i, \Q)$ which must have order 1 or $p$. By our assumption $p$ does not divide the order of $\gal(E_i, \Q)$ and thus we must have that $\sigma|_{E_i} = \text{Id}$. So $\sigma$ acts trivially on the roots $\mu_i$. By following a similar argument as in the proof of Lemma \ref{lem:IrrPrimeDegreeCyclicPerm}, we find that $\sigma$ acts, without loss of generality, by the cyclic permutation $(1 \, 2 \, \ldots \, p)$ on the roots of $g$. 
	    
	    Since the Lie algebra $\n^\Q$ has no abelian factor, there exists either $X_m$ such that $[X_1, X_m] \neq 0$ or there exits $Y_m$ such that $[X_1,Y_m] \neq 0$. 
	   % We recall that the eigenvalues of $A_i$ are $i$-fold products of the roots $\lambda_1, \ldots, \lambda_p, \mu_1, \ldots, \mu_{n_1 - p}$. If no $\lambda_i$ occurs in these $i$-fold product eigenvalues for $i > 1$, then the subspace spanned by $X_1, \ldots, X_p$ forms a non-trivial abelian factor of $\n^\Q$. So we can assume that there is an eigenvalue of the form $\gamma := \lambda_1^{s_1} \cdot \ldots \lambda_p^{s_p} \cdot \mu_1^{t_1} \cdot \ldots \cdot \mu_{n_1 - p}^{t_{n_1 - p}}$ with $s_i, t_i$ non-negative integers and without loss of generality $s_1 > 0$ and $j := \sum_{i = 1}^p s_i + \sum_{i = 1}^{n_1 - p} t_i \geq 2$. Now we let $\sigma$ act on this eigenvalue $\gamma$. The orbit has to have either 1 or $p$ elements. 
	    In the first case, the eigenvalue of $[X_1,X_m]$ is $\lambda_1 \lambda_m$ and in the other case, the eigenvalue of $[X_1, Y_m]$ is $\lambda_1 \mu_m$. Since $p$ is prime, both of these eigenvalues must be either fixed by $\sigma$ or have $p$ conjugates under the action of $\sigma$. By Proposition \ref{prop:primeDegFullRank} we know $g$ satisfies the full rank condition and thus if $\lambda_1 \lambda_m$ would be fixed under $\sigma$, Lemma \ref{lem:FullRankPermutationNotTransitive} gives a contradiction. If $\lambda_1 \mu_m$ is fixed by $\sigma$, we get that $\lambda_1 = \ldots = \lambda_p$ since $\mu_m$ is fixed by $\sigma$. This contradicts the fact that $g$ is irreducible and thus the minimal polynomial of $\lambda_1 \lambda_m$ or $\lambda_1 \mu_m$ has degree at least $p$. In particular the statement of the lemma follows for the $j$ for which $\lambda_1 \lambda_m$ or $\lambda_1 \mu_m$ is an eigenvalue of $A_j$.
	    %follows that $\sigma(\lambda_1^{s_1} \cdot \ldots \lambda_p^{s_p}) = \lambda_1^{s_1} \cdot \ldots \lambda_p^{s_p}$ since $\sigma$ acts trivially on the $\mu_i$'s. Applying Proposition \ref{prop:primeDegFullRank} and Lemma \ref{lem:FullRankPermutationNotTransitive} on the polynomial $g$, we find that $\sigma$ can not act transitively on the roots of $g$, which is a contradiction. Therefore we must conclude that the orbit of $\gamma$ under $\sigma$ has $p$ elements. This proves that its minimal polynomial over $\Q$ has at least $p$ roots and thus that $A_j$ has at least $p$ different eigenvalues and finally that $n_j \geq p$.
	\end{proof}
	Before dealing with the final case, we first prove this technical lemma.
	\begin{lemma}
	    \label{lem:deg3deg4ExistenceOfGaloisEl}
	    If $f = g\cdot h$ is a polynomial with $g,h$ irreducible polynomials of degree $p$ and $n$, respectively with $p$ prime and $p$ not a divisor of $n$, then for any root $\lambda$ of $h$ there exists an element of the Galois group of $f$ that fixes $\lambda$ but does not fix any root of $g$.
	\end{lemma}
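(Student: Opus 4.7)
The plan is to produce the desired Galois element by applying Cauchy's theorem inside the stabilizer of $\lambda$, projected to the quotient that acts faithfully on the roots of $g$. Let $E$ denote the splitting field of $f$ over $\Q$, write $G = \gal(E,\Q)$, and let $H_g = \gal(E,\Q(g))$, which is precisely the kernel of the action of $G$ on the roots of $g$. Then $G/H_g$ acts faithfully and transitively on those $p$ roots. Set $H = \stab_G(\lambda) = \gal(E,\Q(\lambda))$. The goal is to show that the image $\bar{H} = HH_g/H_g$ contains an element of order $p$, and that any lift $\sigma \in H$ of such an element satisfies the conclusion of the lemma.

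First I would verify that $p$ divides $|\bar{H}|$. By the second isomorphism theorem, $|\bar{H}| = |H|/|H \cap H_g|$, and the Galois correspondence identifies this with $[\Q(g,\lambda):\Q(\lambda)]$, where $\Q(g,\lambda)$ denotes the compositum of $\Q(g)$ and $\Q(\lambda)$ inside $E$. The tower $\Q \subset \Q(g) \subset \Q(g,\lambda)$ gives $[\Q(g,\lambda):\Q] = [\Q(g,\lambda):\Q(g)] \cdot [\Q(g):\Q]$, and since $\gal(\Q(g),\Q)$ acts transitively on the $p$ roots of $g$, the orbit--stabilizer theorem forces $p \mid [\Q(g):\Q]$, hence $p \mid [\Q(g,\lambda):\Q]$. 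On the other hand, the tower $\Q \subset \Q(\lambda) \subset \Q(g,\lambda)$ yields $[\Q(g,\lambda):\Q] = [\Q(g,\lambda):\Q(\lambda)] \cdot n$. Comparing $p$-parts and using $\gcd(p,n) = 1$ then gives $p \mid [\Q(g,\lambda):\Q(\lambda)] = |\bar{H}|$.

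By Cauchy's theorem, $\bar{H}$ contains an element $\bar{\sigma}$ of order $p$. Lift $\bar{\sigma}$ to some $\sigma \in H$; by construction $\sigma(\lambda) = \lambda$, and since the action of $G$ on the roots of $g$ factors through $G/H_g$, the permutation induced by $\sigma$ on these $p$ roots is exactly $\bar{\sigma}$. Because $G/H_g$ acts faithfully, $\bar{\sigma}$ is a non-identity permutation of the roots of $g$, and every orbit of $\langle \bar{\sigma} \rangle$ on the roots has size dividing $p$, hence $1$ or $p$. As the orbit sizes partition the $p$ roots and at least one root is not fixed, there must be a single orbit of size $p$. Consequently $\sigma$ fixes $\lambda$ but no root of $g$, which is what we wanted.

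No genuine obstacle is expected: the only slightly delicate point is the divisibility $p \mid |\bar{H}|$, which reduces to the multiplicativity of field degrees combined with the coprimality $\gcd(p,n) = 1$. The rest is purely group-theoretic bookkeeping, mirroring the approach already used in Lemma \ref{lem:IrrPrimeDegreeCyclicPerm} and Lemma \ref{lem:productOfPolyWithOnePrime}.
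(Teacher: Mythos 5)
Your proof is correct, and it takes a genuinely different route from the paper's. The paper works with the explicit embedding $\gal(E,\Q) \hookrightarrow \gal(E_1,\Q) \oplus \gal(E_2,\Q)$ where $E_1, E_2$ are the splitting fields of $g$ and $h$: it picks an element of order $p$ in $\gal(E_1,\Q)$, extends it to $(\sigma,\tau)$, raises to an $m$-th power (with $m$ the prime-to-$p$ part of $|\tau|$) to force $\tau^m$ to be a $p$-element that must fix some root of $h$ because $p \nmid n$, and then conjugates by an extension of a transitive element of $\gal(E_2,\Q)$ to move that fixed root onto $\lambda$. Your argument instead identifies the precise group in which the desired element must live, namely the image $\bar H$ of $\stab_G(\lambda)$ in $G/H_g \cong \gal(\Q(g),\Q)$, and shows directly via field-degree counting and $\gcd(p,n)=1$ that $p$ divides $|\bar H| = [\Q(g,\lambda):\Q(\lambda)]$; Cauchy then hands you the element, and the orbit-size argument on the $p$ roots of $g$ finishes. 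The two approaches rely on the same inputs (irreducibility of $g$ at prime degree, $p \nmid n$, orbit--stabilizer), but yours replaces the paper's explicit taking-of-powers-and-conjugating construction with a single application of the second isomorphism theorem plus a tower-degree computation, which is shorter and arguably cleaner; the paper's version is more constructive, exhibiting the element by an explicit formula rather than by an existence statement.
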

	\begin{proof}
	    Let $E, E_1$ and $E_2$ denote the splitting fields of $f, g$ and $h$, respectively. By \cite[Theorem 8.4]{stew98-1}, it follows that these are all normal field extensions of $\Q$. For any field automorphism $\sigma \in \gal(E, \Q)$ we thus have that $\sigma(E_1) = E_1$ and $\sigma(E_2) = E_2$ by \cite[Theorem 10.5]{stew98-1}. Therefore we have the natural injection
	    \begin{equation*}
	        \gal(E, \Q) \hookrightarrow \gal(E_1, \Q) \oplus \gal(E_2, \Q): \sigma \mapsto (\sigma|_{E_1}, \sigma|_{E_2})
	    \end{equation*}
	    To see this map is injective, note that $E_1$ and $E_2$ generate $E$ as a field and thus $\sigma|_{E_1}$ and $\sigma|_{E_2}$ completely determine $\sigma$. From here we use this identification to write elements of the Galois group of $f$ as an ordered pair of an element of the Galois group of $g$ and one of the Galois group of $h$. 
	    
	    Let $\lambda$ be one of the roots of $h$. Take an element $\sigma$ of order $p$ in the Galois group of $g$. Since $E/\Q$ is normal, we can extend $\sigma:E_1 \to E_1$ to an element $(\sigma, \tau)$ of the Galois group of $f$ (see \cite[Theorem 10.1]{stew98-1}), where $\tau$ lies in the Galois group of $h$ . The order of $\tau$ can be written as $|\tau| = p^l m$ where $p$ and $m$ are coprime and $l \in \N$. 
	    
	    Now consider the field automorphism $(\sigma, \tau)^m = (\sigma^m, \tau^m)$. Since $p$ and $m$ are coprime and $\sigma$ has order $p$, we get that $\sigma^m$ has order $p$ as well. The order of $\tau^m$ is clearly equal to $p^l$. Let $H$ denote the set of roots of $h$, then the action of $\langle \tau^m \rangle$ on $H$ gives a partition $H = H_1 \sqcup \ldots \sqcup H_k$ where $H_i$ are the orbits under this action. By the orbit stabilizer theorem we get that the size of each orbit $H_i$, must divide $p^l$. If $p$ divides $|H_i|$ for all $1 \leq i \leq k$, then it follows that $p$ divides $\sum_i |H_i| = |H| = n$, which is in contradiction with the assumption. Therefore there must be at least one orbit which counts only one element and thus $\tau^m$ fixes a root of $h$, call it $\mu \in H$. Since $\gal(E_2, \Q)$ acts transitively on $H$, there exists an element $\pi \in \gal(E_2, \Q)$ such that $\pi(\mu) = \lambda$. Again, extend this to a field automorphism $(\pi', \pi)$ of $E$ where $\pi' \in \gal(E_1, \Q)$. Then $(\pi', \pi) (\sigma^m, \tau^m) (\pi', \pi)^{-1}$ fixes the root $\lambda$ and is still of order $p$ when restricted to $E_1$ which means it does not fix any root of $g$.
	\end{proof}
%	\textcolor{red}{In het vorige bewijs moeten er nog twee dingen extra uitgelegd worden denk ik. 1. De injectie Gal(E,Q) in de twee andere. 2. Hoe je elementen van je Galois groep kan uitbreiden. Bij beide zie ik in dat ze kloppen, maar ze moeten wel nog ergens verklaard worden (eventueel referentie, of wat korte uitleg).}
%	\textcolor{blue}{Ik heb het lemma nog wat veralgemeend, omdat het kon. Ik heb gerefereerd naar het boek 'Galois Theory, Steward'. Ik refereer nu wel 3 keer apart naar verschillende stellingen van dit boek. Misschien is dat wat te veel? Ofwel moeten we ergens een inleidend stukje over Galois theorie zetten waar we dergelijke zaken vermelden. Ik gebruik het uitbreiden van een veld automorfisme nog op andere plaatsen ook.}
	Combining the previous results leads to the final case for Theorem \ref{thm:noexpanding}.
	\begin{prop}
	    Let $\n^\Q = \n_1 \oplus\ldots\oplus \n_{c}$ be a nilpotent Anosov Lie algebra with $n_1 = 7$ and $\dim \n^\Q < 12$, then $\n$ has a positive grading.
	\end{prop}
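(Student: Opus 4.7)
The type of $\n^\Q$ must be $(7, 2, 2)$, since $n_1 = 7$, $\dim \n^\Q < 12$, and $n_i \geq 2$ for $i \geq 2$, together with $c \geq 3$ (recall that $2$-step nilpotent Lie algebras are automatically positively graded), leave no other possibility. My plan is to show in every sub-case that $\n^\Q$ has a non-trivial abelian factor, so that Theorem \ref{thm:directSumAnosov} provides a decomposition $\n^\Q = \mathfrak{a} \oplus \g$ with $\dim \mathfrak{a} \geq 2$ and $\g$ Anosov of type $(n_1(\g), 2, 2)$ with $n_1(\g) \in \{3, 4, 5\}$ and $\dim \g < 12$. The propositions already established for $n_1 \leq 5$ then give a positive grading on $\g$, and Theorem \ref{thm:directSumExp} lifts it to $\n^\Q$.

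First, the characteristic polynomial $f_1$ of $A_1$ cannot be irreducible: since $n_1 = 7$ is prime, Proposition \ref{prop:primeDegFullRank} would force full rank, and then Proposition \ref{prop:n1PrimeFullRankThenDividesni} would force $7 \mid n_2 = 2$. Anosov polynomials have no linear factors, so the admissible partitions of $7$ into irreducible-factor degrees are $(2, 5)$, $(3, 4)$, and $(2, 2, 3)$. For the partitions $(2, 5)$ and $(2, 2, 3)$, there is an irreducible factor of prime degree $p \in \{5, 3\}$ while every cofactor has splitting field of degree $2$ over $\Q$; hence $p$ does not divide $|\gal(E_i, \Q)|$ for any cofactor. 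Lemma \ref{lem:productOfPolyWithOnePrime} then yields either a non-trivial abelian factor of $\n^\Q$ or some index $j \geq 2$ with $n_j \geq p \geq 3$, and the latter contradicts $n_2 = n_3 = 2$.

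For the partition $f_1 = g_3 \cdot g_4$, I split on the Galois group $G_4 = \gal(F_4, \Q)$ of the splitting field $F_4$ of $g_4$. If $3 \nmid |G_4|$, i.e.\ $G_4 \in \{\Z_4, K_4, D_8\}$ by Table \ref{tab:GaloisGroupsAnosovPolyUpTo4}, Lemma \ref{lem:productOfPolyWithOnePrime} with $p = 3$ again produces the abelian factor. The remaining sub-case $G_4 \in \{A_4, S_4\}$, in which $g_4$ satisfies the full rank condition and hence both $g_3$ and $g_4$ do, is the main obstacle. Here I exploit the fact that the two eigenvalues of $A_2$ form a single Galois orbit of size exactly $2$ under $G := \gal(E, \Q)$ (a size-$1$ orbit would force rational and hence $\pm 1$ eigenvalues, contradicting hyperbolicity), so their common stabilizer has index $2$ in $G$ and therefore contains every $3$-Sylow subgroup. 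Using Lemma \ref{lem:deg3deg4ExistenceOfGaloisEl} I obtain a Galois element $\sigma$ of $3$-power order with $\sigma|_{F_3}$ a $3$-cycle and $\sigma$ fixing a chosen root of $g_4$; the $\sigma$-invariance of the eigenvalues of $A_2$, combined with Lemma \ref{lem:FullRankPermutationNotTransitive} applied to $g_3$ and $g_4$, restricts their possible shapes. Specifically, $2$-fold products of the form $\lambda_i \lambda_j$ are ruled out because $\sigma$-invariance forces all $\lambda$-exponents to be equal, making the element rational; and pure $\mu$-products $\mu_i \mu_j$ are excluded because $A_4$ has no subgroup of index $2$, while any $A_4$-fixed $\mu$-monomial of degree $2$ would require an integer solution to $4b = 2$. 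The analogous constraint $4b = 3$ rules out pure $\mu$-monomials of degree $3$ for the eigenvalues of $A_3$. The main technical difficulty is then to use the forced cross-relations in $F_3 \cap F_4$ (which appear because the surviving candidates for eigenvalues of $A_2$ and $A_3$ are mixed $\lambda$-$\mu$ products) to construct an abelian subfactor of $\n^E$ that descends to $\n^\Q$ via $m(\n^\Q) = m(\n^E)$; in every case the resulting abelian factor of dimension at least $2$, combined with Theorem \ref{thm:directSumAnosov} and the propositions for $n_1 \in \{3, 4, 5\}$, delivers the desired positive grading.
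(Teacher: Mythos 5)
Your overall strategy—fix the type $(7,2,2)$ and then, in each sub-case, either derive a contradiction or produce a non-trivial abelian factor so that Theorems \ref{thm:directSumAnosov} and \ref{thm:directSumExp} reduce to smaller $n_1$—is a legitimate alternative to the paper's approach, which never reduces to smaller $n_1$ but instead bounds $n_2$ directly and concludes $c\leq 2$. Most of your sub-cases go through: the irreducible case is impossible for type $(7,2,2)$, and for the factorisations $(2,5)$, $(2,2,3)$ and $(3,4)$ with $3\nmid |G_4|$, Lemma \ref{lem:productOfPolyWithOnePrime} gives either a factor $n_j\geq p$ (impossible when $n_2=n_3=2$) or an abelian factor, which by Theorem \ref{thm:directSumAnosov} has dimension $\geq 2$, so the reduction works.

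The gap is in the remaining sub-case $f_1 = g_3 \cdot g_4$ with $G_4 \in \{A_4, S_4\}$. You correctly observe that the stabiliser $H$ of an eigenvalue of $A_2$ has index $2$ in $G$ and hence, being normal, contains every $3$-Sylow subgroup of $G$. Combined with Lemma \ref{lem:deg3deg4ExistenceOfGaloisEl} (whose proof produces an element of $3$-power order fixing a prescribed root of $g_4$ and $3$-cycling the roots of $g_3$) and Lemma \ref{lem:FullRankPermutationNotTransitive}, this does rule out $\lambda_i\lambda_j$-products (exponents would have to be constant, but $3b=2$ has no solution) and $\mu_i\mu_j$-products (the image of $H$ in $G_4$ is transitive, but $4b=2$ has no solution). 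But you then claim the ``surviving candidates'' are the mixed products $\lambda_i\mu_j$ and retreat to an unexecuted plan of extracting cross-field relations and building an abelian factor. In fact, \emph{exactly the same argument rules out $\lambda_i\mu_j$}: pick $\sigma$ from Lemma \ref{lem:deg3deg4ExistenceOfGaloisEl} of $3$-power order fixing the specific $\mu_j$ and acting as a $3$-cycle on the $\lambda$'s; since $\sigma$ lies in a $3$-Sylow it lies in $H$, so $\sigma(\lambda_i\mu_j)=\lambda_{\sigma(i)}\mu_j=\lambda_i\mu_j$, forcing $\sigma(\lambda_i)=\lambda_i$, contradiction. Thus there are \emph{no} surviving candidates for the eigenvalues of $A_2$, which means $\n_2=0$, contradicting the type $(7,2,2)$. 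This sub-case is simply empty; no abelian factor construction is needed (or even possible, since the hypothesis is inconsistent). The paper avoids this entirely by the opposite bookkeeping: in each of the three eigenvalue shapes it exhibits an orbit of size $\geq 3$ under a suitable Galois element and concludes $n_2\geq 3$, hence $c\leq 2$ under the dimension bound. Your case analysis, once you close the $\lambda\mu$ case with the observation above, would also be complete, but as written the proposal has a genuine unfilled hole precisely where you flag ``the main technical difficulty.''
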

	\begin{proof}
	    Let $f_1$ denote the characteristic polynomial of $A_1$, the restriction of $A$ to $\n_1$. We prove the statement separately for the possible factorisations of $f$ in irreducible polynomials.
	    \begin{itemize}
	        \item \underline{$f_1$ is irreducible:} Combine Proposition \ref{prop:primeDegFullRank} and Proposition \ref{prop:n1PrimeFullRankThenDividesni} to find that $7$ divides $n_i$ for all $1 \leq i \leq c$. Together with the assumption $\dim \n < 12$ we must have $c = 1$. In particular $\n^\Q$ has a positive grading.
	        \item \underline{$f_1 = g \cdot h_1 \cdot h_2$ with $\deg g = 3$ and $\deg h_1 = \deg h_2 = 2$:} Since the degree of $g$ is prime and strictly bigger than the degrees of $h_1$ and $h_2$, we have that the degree of $g$ does not divide the order of the Galois groups of $h_1$ and $h_2$. We can assume $\n^\Q$ has no non-trivial abelian factor since in this case our work in the previous sections for smaller $n_1$ and Theorem \ref{thm:directSumAnosov} and \ref{thm:directSumExp} assure $\n^\Q$ has a positive grading. Lemma \ref{lem:productOfPolyWithOnePrime} then gives the existence of an index $1 \leq j \leq c$ such that $n_j \geq 3$. Together with the restriction $\dim \n^\Q < 12$, we get that $c \leq 2$ and thus $\n^\Q$ admits a positive grading.
	        \item \underline{$f_1 = g\cdot h$ with $\deg g = 5$ and $\deg h = 2$:} Since $g$ has prime degree and is strictly greater than the degree of $h$, we have that the degree of $g$ does not divide the order of the Galois group of $h$. We can assume $\n^\Q$ has no non-trivial abelian factor since in this case our work in the previous sections for smaller $n_1$ and Theorem \ref{thm:directSumAnosov} and \ref{thm:directSumExp} assure $\n^\Q$ has a positive grading. Lemma \ref{lem:productOfPolyWithOnePrime} then gives an index $1 \leq j \leq c$ such that $n_j \geq 5$. Together with the restriction $\dim \n^\Q < 12$, we get that $c < 2$ and thus $\n^\Q$ admits a positive grading.
	        \item \underline{$f_1 = g \cdot h$ with $\deg g = 3$ and $\deg h = 4$:} We can assume that $3$ divides the order of the Galois group of $h$, otherwise we can use Lemma \ref{lem:productOfPolyWithOnePrime} in a similar fashion as with the previous two cases and find that $\n^\Q$ must be positively graded. It follows that the Galois group of $h$ is either the alternating group $A_4$ or the whole permutation group $S_4$. Let $\lambda_1, \lambda_2, \lambda_3$ be the roots of $g$ and $\mu_1, \mu_2, \mu_3, \mu_4$ the roots of $h$. Since $\n^\Q$ is not abelian there must be without loss of generality an eigenvector in $\n_2$ with one of following three eigenvalues: $\lambda_1 \lambda_2$, $\lambda_1 \mu_1$ or $\mu_1 \mu_2$. 
	        
	        In case the eigenvalue is $\lambda_1 \lambda_2$, take the element in the Galois group of $g$ corresponding with the permutation $(1\, 2\, 3)$ and extend it to an element $\sigma$ of the Galois group of $f$. Then it is clear that $\sigma(\lambda_1 \lambda_2) = \lambda_2 \lambda_3$ and $\sigma^2(\lambda_1 \lambda_2) = \lambda_3 \lambda_1$ are also eigenvalues of $A_2$. These are three different eigenvalues and thus we get $n_2 \geq 3$. 
	        
	        In case the eigenvalue is equal to $\lambda_1 \mu_1$, we use Lemma \ref{lem:deg3deg4ExistenceOfGaloisEl} and find an element $\sigma$ in the Galois group of $f$ such that $\sigma$ fixes $\mu_1$ and acts by the permutation $(1\, 2\, 3)$ on the roots of $g$. Then $A_2$ must also have eigenvalues $\sigma(\lambda_1\mu_1) = \lambda_2 \mu_1$ and $\sigma^2(\lambda_1 \mu_1) = \lambda_3 \mu_1$. This gives three different eigenvalues of $A_2$ and thus $n_2 \geq 3$. 
	        
	        In case the eigenvalue equals $\mu_1 \mu_2$, take the element in the Galois group of $h$ corresponding to the permutation $(1)(2\, 3\, 4)$ and extend it to an element $\sigma$ of the Galois group of $f$. Then $A_2$ must also have the eigenvalues $\sigma(\mu_1 \mu_2) = \mu_1 \mu_3$ and $\sigma^2(\mu_1, \mu_2) = \mu_1 \mu_4$. This gives three different eigenvalues of $A_2$ and thus $n_2 \geq 3$. 
	        
	        In all cases we see that $n_2 \geq 3$. Therefore by the restriction on the dimension of $\n^\Q$ we find that $c = 2$ and thus that $\n^\Q$ admits a positive grading. 
	   \end{itemize}
    \end{proof}

	\bibliographystyle{plain}
	\bibliography{ref}

\begin{thebibliography}{10}

\bibitem{deki99-1}
Karel Dekimpe.
\newblock Hyperbolic automorphisms and {A}nosov diffeomorphisms on
  nilmanifolds.
\newblock {\em Trans. Amer. Math. Soc.}, 353(7):pp.\ 2859--2877, 2001.

\bibitem{deki18-1}
Karel Dekimpe.
\newblock A users' guide to infra-nilmanifolds and almost-{B}ieberbach groups.
\newblock In {\em Handbook of group actions. {V}ol. {III}}, volume~40 of {\em
  Adv. Lect. Math. (ALM)}, pages 215--262. Int. Press, Somerville, MA, 2018.

\bibitem{dd14-1}
Karel Dekimpe and Jonas Der\'{e}.
\newblock Expanding maps and non-trivial self-covers on infra-nilmanifolds.
\newblock {\em Topol. Methods Nonlinear Anal.}, 47(1):347--368, 2016.

\bibitem{dere13-1}
Jonas Der\'e.
\newblock A new method for constructing {A}nosov {L}ie algebras.
\newblock {\em Trans. Amer. Math. Soc.}, 368(2):1497--1516, 2016.

\bibitem{dere14-1}
Jonas Der\'{e}.
\newblock Gradings on {L}ie algebras with applications to infra-nilmanifolds.
\newblock {\em Groups Geom. Dyn.}, 11(1):105--120, 2017.

\bibitem{dere20-1}
Jonas Der\'{e}.
\newblock Orthogonal bi-invariant complex structures on metric {L}ie algebras.
\newblock {\em Ann. Global Anal. Geom.}, 59(2):157--177, 2021.

\bibitem{dl57-1}
J.~Dixmier and W.G. Lister.
\newblock Derivations of nilpotent {L}ie algebras.
\newblock {\em Proc. Am. Math. Soc.}, 8:155--158, 1957.

\bibitem{fgh13-1}
David~J. Fisher, Robert~J. Gray, and Peter~E. Hydon.
\newblock Automorphisms of real {L}ie algebras of dimension five or less.
\newblock {\em J. Phys. A}, 46(22):225204, 18, 2013.

\bibitem{fran69-1}
J.~Franks.
\newblock Anosov diffeomorphisms on tori.
\newblock {\em Trans. Amer. Math. Soc.}, 145:117--124, 1969.

\bibitem{fran70-1}
J.~Franks.
\newblock Anosov diffeomorphisms.
\newblock {\em Global Analysis: Proceedings of the Symposia in Pure
  Mathematics}, 14,:pp. 61--93, 1970.

\bibitem{grom81-1}
M.~Gromov.
\newblock Groups of polynomial growth and expanding maps.
\newblock {\em Institut des Hautes \'Etudes Scientifiques}, (53):pp. 53--73,
  1981.

\bibitem{laur08-1}
Jorge Lauret.
\newblock Rational forms of nilpotent {L}ie algebras and {A}nosov
  diffeomorphisms.
\newblock {\em Monatsh. Math.}, 155(1):15--30, 2008.

\bibitem{lw08-1}
Jorge Lauret and Cynthia~E. Will.
\newblock On {A}nosov automorphisms of nilmanifolds.
\newblock {\em J. Pure Appl. Algebra}, 212(7):1747--1755, 2008.

\bibitem{lw09-1}
Jorge Lauret and Cynthia~E. Will.
\newblock Nilmanifolds of dimension {$\leq 8$} admitting {A}nosov
  diffeomorphisms.
\newblock {\em Trans. Amer. Math. Soc.}, 361(5):2377--2395, 2009.

\bibitem{mw15-1}
Meera Mainkar and Cynthia~E. Will.
\newblock Characterization of 9-dimensional anosov lie algebras.
\newblock {\em Journal of Lie Theory}, 2015.

\bibitem{main12-1}
Meera~G. Mainkar.
\newblock Anosov {L}ie algebras and algebraic units in number fields.
\newblock {\em Monatsh. Math.}, 165(1):79--90, 2012.

\bibitem{newh70-1}
S.~E. Newhouse.
\newblock On codimension one {A}nosov diffeomorphisms.
\newblock {\em Amer. J. Math.}, 92:761--770, 1970.

\bibitem{payn09-1}
Tracy~L. Payne.
\newblock Anosov automorphisms of nilpotent {L}ie algebras.
\newblock {\em J. Mod. Dyn.}, 3(1):121--158, 2009.

\bibitem{shub69-1}
M.~Shub.
\newblock Endomorphisms of compact differentiable manifolds.
\newblock {\em Amer. J. Math.}, 91,:pp. 175--199, 1969.

\bibitem{shub70-1}
M.~Shub.
\newblock Expanding maps.
\newblock {\em Global Analysis: Proceedings of the Symposia in Pure Mathematics
  XIV}, 14,:pp. 273--277, 1970.

\bibitem{smal67-1}
S.~Smale.
\newblock Differentiable dynamical systems.
\newblock {\em Bull. Amer. Math. Soc.}, 73,:pp. 747--817, 1967.

\bibitem{stew98-1}
Ian Stewart.
\newblock {\em Galois theory}.
\newblock Chapman and Hall, Ltd., London, second edition, 1989.

\end{thebibliography}
	
\end{document}